\newtheorem{thm}{Theorem}[section]
\newtheorem{prop}[thm]{Proposition}
\newtheorem{lem}[thm]{Lemma}
\newtheorem{cor}[thm]{Corollary}
\theoremstyle{definition}
\newtheorem*{assumption}{Assumption}
\theoremstyle{remark}
\newtheorem{remark}[thm]{Remark}
\numberwithin{equation}{section}
 \newcommand{\op}{\mbox{\tiny $\mathrm{op}$}}
  \newcommand{\Aut}{\mathrm{Aut}}
\begin{document}

\large 

\title{A generalization of Ito's theorem to skew braces}
\author{Cindy (Sin Yi) Tsang}
\address{Department of Mathematics, Ochanomizu University, 2-1-1 Otsuka, Bunkyo-ku, Tokyo, Japan}
\email{tsang.sin.yi@ocha.ac.jp}
\urladdr{http://sites.google.com/site/cindysinyitsang/} 

\date{\today}

\maketitle


\begin{abstract}The famous theorem of It\^{o} in group theory states that if a group $G=HK$ is the product of two abelian subgroups $H$ and $K$, then $G$ is metabelian. We shall generalize this to the setting of a skew brace $(A,{\cdot\,},\circ)$. Our main result says that if $A = BC$ or $A = B\circ C$ is the product of two trivial sub-skew braces $B$ and $C$ which are both left and right ideals in the opposite skew brace of $A$, then $A$ is meta-trivial. One can recover It\^{o}'s Theorem by taking $A$ to be an almost trivial skew brace.
\end{abstract}


\setcounter{tocdepth}{1}
\tableofcontents

\vspace{-6mm}
 
\section{Introduction}

A \emph{skew brace} is a set $(A,{\cdot \,},\circ)$ equipped with two group operations $\cdot$ and $\circ$ such that the so-called brace relation 
\[ a\circ (b\cdot c) = (a\circ b)\cdot a^{-1}\cdot (a\circ c)\]
holds for all $a,b,c\in A$. The groups $(A,{\cdot\, })$ and $(A,\circ)$ have the same identity element which we denote by $1$. For $a\in A$, we shall write $a^{-1}$ for its inverse in $(A,{\cdot\, })$ and $\overline{a}$ for its inverse in $(A,\circ)$. For $a,b\in A$, let us also define
\[ a * b = a^{-1}(a\circ b) b^{-1},\]
and note that $a*b = 1$ exactly when $a\circ b= ab$. Thus, this binary operation $*$ measures the difference of $\cdot$ and $\circ$. We say that $(A,{\cdot \,},\circ)$ is \emph{trivial} if
\[ a\circ b =ab \mbox{ or equivalently }a*b =1\mbox{ for all }a,b\in A,\]
and $(A,{\cdot\,},\circ)$ is \emph{almost trivial} if
\[ a\circ b =ba \mbox{ or equivalently }a*b =a^{-1}bab^{-1}\mbox{ for all }a,b\in A.\]
Identifying groups with almost trivial skew braces, we may therefore view $*$ as a generalization of the commutator operation, and trivial skew brace as a generalization of abelian group. 


\vspace{2mm}

The notion of skew brace was first introduced in \cite{Skew braces} as a tool to study the Yang-Baxter equation. We shall not explore this connection here so we refer the reader to \cite{Skew braces} for the background.

\vspace{2mm}


The purpose of this paper is to generalize the following results from group theory in the setting of skew braces. 

\begin{prop}\label{prop:class2}Let $G=HK$ be a group which is a product of two abelian normal subgroups $H$ and $K$. Then $[[G,G],G] =1$, namely $G$ is nilpotent of class at most two.
\end{prop}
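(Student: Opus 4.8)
The plan is to reduce everything to the single assertion that the derived subgroup $[G,G]$ lies in the center $Z(G)$. Once this is established, $[[G,G],G]$ is generated by commutators $[x,g]$ with $x\in[G,G]\subseteq Z(G)$ and $g\in G$, and each such commutator is trivial; hence $[[G,G],G]=1$, which is exactly the claim.

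The first step is to compute $[G,G]$ explicitly. Since $H$ and $K$ are normal in $G$, each of $[H,H]$, $[H,K]$, $[K,H]$, $[K,K]$ is normal in $G$, and the standard expansion $[ab,c]=[a,c]^{b}[b,c]$ shows that for normal subgroups $A,B,C$ one has $[AB,C]=[A,C][B,C]$, the right-hand side being a subgroup precisely because both factors are normal. Applying this identity on both coordinates to $G=HK$, using $[H,H]=[K,K]=1$ because $H$ and $K$ are abelian, and using $[K,H]=[H,K]$, one obtains
\[ [G,G]=[HK,HK]=[H,H]\,[H,K]\,[K,H]\,[K,K]=[H,K]. \]

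The second step is to show $[H,K]\subseteq Z(G)$. Because $H\trianglelefteq G$ we have $[H,K]\subseteq H$, and because $K\trianglelefteq G$ we have $[H,K]\subseteq K$; hence $[H,K]\subseteq H\cap K$. Now $H\cap K$ is a subgroup of the abelian group $H$, so it is centralized by $H$, and likewise it is centralized by $K$, hence by $G=HK$; that is, $H\cap K\subseteq Z(G)$. Combining with the first step gives $[G,G]=[H,K]\subseteq Z(G)$, and the proposition follows.

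I do not anticipate any genuine obstacle: the argument is a short chain of routine commutator manipulations. The only point that merits a moment's care is the justification of $[AB,C]=[A,C][B,C]$ — in particular that the product on the right is already a subgroup and absorbs the conjugates $[a,c]^{b}$ — which is exactly where the normality of $H$ and $K$ enters. It is presumably the skew-brace analogue of this bookkeeping, rather than any conceptual leap, that constitutes the real work in the generalization that follows.
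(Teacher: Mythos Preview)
Your proof is correct and follows essentially the same route as the paper: compute $[G,G]=[H,K]$ via the commutator expansion for $G=HK$ with $H,K$ abelian, observe $[H,K]\subseteq H\cap K$ by normality, and conclude $[G,G]\subseteq Z(G)$ since $H\cap K$ is centralized by both abelian factors. Your write-up is slightly more explicit about the justification of $[AB,C]=[A,C][B,C]$, but the argument is the same.
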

\begin{proof}
Since $G=HK$ with both $H$ and $K$ abelian, we have
\begin{equation}\label{commutator}
[G,G] = [H,H][K,K][H,K] = [H,K].
\end{equation}
Notice that $[H,K]\subseteq H\cap K$ because $H$ and $K$ are normal subgroups. Since $H$ and $K$ are abelian, elements of $[G,G]$ commute with those of $H$ and $K$. But $G=HK$, so this means that $[G,G]$ is contained in the center of $G$.
\end{proof}

\begin{prop}\label{prop:metabelian}Let $G=HK$ be a group which is a product of two abelian subgroups $H$ and $K$. Then $[G,G]$ is abelian, namely $G$ is metabelian.
\end{prop}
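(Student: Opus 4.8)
The plan is to prove It\^{o}'s Theorem along classical lines, taking care that normality of the factors is never used.

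I would begin exactly as in \eqref{commutator}: since $H$ and $K$ are abelian, $[G,G]$ is generated by the commutators $[h,k]$ with $h\in H$ and $k\in K$, so write $D=\langle\,[h,k]:h\in H,\ k\in K\,\rangle$. In contrast to Proposition~\ref{prop:class2} we cannot say $D\subseteq H\cap K$, so I would check directly that $D=[G,G]$: the identities $[h_1h_2,k]=[h_1,k]^{h_2}[h_2,k]$ and $[h,k_1k_2]=[h,k_2][h,k_1]^{k_2}$ show that $D$ is normalized by both $H$ and $K$, hence $D\trianglelefteq HK=G$; and $G/D$ is generated by the images of the abelian groups $H$ and $K$, which commute in $G/D$, so $G/D$ is abelian and therefore $[G,G]\subseteq D$. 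It then suffices to prove that $D$ is abelian, and since $D$ is generated by the $[h,k]$, it is enough to show that $[h_1,k_1]$ commutes with $[h_2,k_2]$ for all $h_1,h_2\in H$ and $k_1,k_2\in K$.

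For this last point I would exploit that $G$ has the \emph{two} factorizations $G=HK$ and $G=KH$ simultaneously. Given $h\in H$ and $k\in K$, rewrite the conjugate $k^{-1}hk$ in the form $HK$; absorbing the resulting $H$-factor produces an expression $[h,k]=rq$ with $r\in H$ and $q\in K$, together with a relation tying $r$ and $q$ to $h$ and $k$ (roughly, $hk=k\,(hr)\,q$). Substituting two such expressions $[h_1,k_1]=r_1q_1$ and $[h_2,k_2]=r_2q_2$ into the commutator and collapsing every product inside $H$ and inside $K$ (using that both are abelian), the commutator $[[h_1,k_1],[h_2,k_2]]$ reduces to a single relation among the $r_i$ and $q_i$; feeding in the constraints coming from the two factorizations should then force that relation, so the commutator is trivial.

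The main obstacle is precisely this last computation: one must organize the repeated applications of the two factorizations and of the abelian relations so that the words cancel rather than proliferate — equivalently, one must control how the $H$-part and $K$-part of a conjugate $x^k$ fail to behave multiplicatively, which is where the hypothesis that \emph{both} $H$ and $K$ are abelian (not merely one of them) is genuinely needed. I also expect this to be the delicate step to carry over to skew braces, where the operator $*$ takes the role of the commutator and the hypothesis that the factors are left and right ideals in the opposite skew brace replaces the bookkeeping that normality would otherwise supply.
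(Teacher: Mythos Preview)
The paper does not give its own argument for this proposition; it simply cites It\^{o}'s original note and the textbook of Ballester-Bolinches, Esteban-Romero and Asaad, remarking only that the result ``may be proven using a very simple commutator calculation.'' So there is no detailed proof in the paper to compare against, and your outline is exactly the classical route those references take: reduce to the generators $[h,k]$ and then show any two of them commute.

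Your first reduction is fine. The identities you quote do show that $D=\langle [h,k]:h\in H,\ k\in K\rangle$ is normal in $G$ and that $G/D$ is abelian, so $D=[G,G]$.

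The gap is in the second step, and it is not cosmetic: as written, you have only asserted that the computation ``should'' collapse, without supplying the mechanism that makes it collapse. The trick you are missing is that conjugation by an element of $H$ or of $K$ sends a \emph{single} commutator $[h,k]$ to another \emph{single} commutator of the same shape. Concretely, since $H$ is abelian one has $\alpha^{-1}[h,k]\alpha=[h,\alpha^{-1}k\alpha]$ for $\alpha\in H$; writing $\alpha^{-1}k\alpha=a'b'$ with $a'\in H$, $b'\in K$ and using $[h,a']=1$ gives $[h,k]^{\alpha}=[h,b']$ up to an inner $H$-conjugation that is harmless. Symmetrically, $[h,k]^{\beta}=[a'',k]$ for $\beta\in K$. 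Once you know this, you compute $[h_1,k_1]^{[h_2,k_2]}$ by conjugating successively by $h_2,k_2,h_2^{-1},k_2^{-1}$, staying inside the set of single commutators at every stage, and check that the end result is $[h_1,k_1]$ again. This is the ``very simple commutator calculation'' the paper alludes to; your description of writing $[h,k]=rq$ and tracking constraints is heading toward it, but without the observation that conjugates remain single commutators the bookkeeping does not close up.
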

\begin{proof}This is a result due to It\^{o} \cite{Ito} and may be proven using a very simple commutator calculation; also see \cite[Theorem 3.1.6]{product book}. 
\end{proof}

Let $(A,{\cdot \,},\circ)$ be a skew brace. A subset $B$ of $A$ is a \emph{sub-skew brace} if $B$ is a subgroup of both $(A,{\cdot\, })$ and $(A,\circ)$. In this case, clearly $(B,{\cdot \,},\circ)$ is a skew brace. A subset $I$ of $A$ is an \emph{ideal} if $I$ is a normal subgroup of both $(A,{\cdot\, })$ and $(A,\circ)$ such that $a\cdot I = a\circ I$ for all $a\in A$. In this case, the quotient
\[ (A/I,{\cdot\, },\circ),\mbox{ where } A/I = (A,{\cdot\,})/(I,{\cdot \,})= (A,\circ)/(I,\circ),\]
endowed with the induced operations $\cdot$ and $\circ$ is a skew brace. 
\vspace{2mm}

For any subsets $X$ and $Y$ of $A$, define $X*Y$ to be the subgroup of $(A,{\cdot\, })$ generated by the elements $x*y$ with $x\in X$ and $y\in Y$. A subgroup $I$ of $(A,{\cdot \,})$ is a \emph{left ideal} if $A*I\subseteq I$, and \emph{right ideal} if $I*A\subseteq I$. When $(A,{\cdot\, },\circ)$ is an almost trivial skew brace identified with the group $(A,{\cdot\, })$, we have
\begin{align*}
 \mbox{sub-skew brace} &= \mbox{subgroup},\\
 \mbox{ideal} & =\mbox{normal subgroup},\\
\mbox{left ideal} &=\mbox{normal subgroup},\\
 \mbox{right ideal} &=\mbox{normal subgroup}.
 \end{align*}
But in general, ideal, left ideal, and right ideal are different notions. Put
\[ A' = A*A  ,\,\ A^3 = A*A',\,\ A^{(3)} = A'*A.\]
We shall say that $(A,{\cdot\, },\circ)$ is \emph{meta-trivial} if $A'$ is a trivial skew brace. This terminology comes from the fact that $A'$ is the smallest ideal of $A$ for which the quotient skew brace is trivial. We also say that $(A,{\cdot\, },\circ)$ is \emph{left nilpotent of index at most $3$} if $A^{3} =1$, and similarly \emph{right nilpotent of index at most $3$} if $A^{(3)} = 1$. When $(A,{\cdot\, },\circ)$ is an almost trivial skew brace identified with the group $(A,{\cdot\, })$, observe that
\[ A' = [A,A],\,\ A^{3} =[A,A'] = [A',A] = A^{(3)},\]
and so in particular we have
\begin{align*}
\mbox{meta-trivial}&=\mbox{metabelian},\\
\mbox{left nilpotent of index at most $3$} &=\mbox{nilpotent of class at most $2$},\\
\mbox{right nilpotent of index at most $3$} &=\mbox{nilpotent of class at most $2$}.
\end{align*}
But in general, left nilpotency and right nilpotency are different notions. We refer the reader to \cite{nilpotent} for more details on nilpotency of skew braces.

\vspace{2mm}

First, Proposition \ref{prop:class2} may be generalized in two ways as follows.

\begin{thm}\label{thm:left} Let $(A,{\cdot \,},\circ)$ be a skew brace such that $A = B\circ C$ for some sub-skew braces $B$ and $C$ satisfying all of the following:
\begin{enumerate}[$(1)$]
\item $B$ and $C$ are trivial skew braces,
\item $B$ and $C$ are normal subgroups of $(A,\circ)$,
\item $B$ and $C$ are right ideals in $A$.
\end{enumerate} 
Then $A^3=1$, namely $A$ is left nilpotent of index at most $3$.
\end{thm}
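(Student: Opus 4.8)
The plan is to prove $A^{3}=A*A'=1$ by showing that $A'=A*A$ is contained in $B\cap C$ (the analogue, over skew braces, of the inclusion $[G,G]\subseteq H\cap K$ in Proposition \ref{prop:class2}). Since $a*x=1$ is equivalent to $\lambda_{a}(x)=x$ — writing $\lambda_{a}(b):=a^{-1}(a\circ b)$, so that $a\circ b=a\lambda_{a}(b)$ and $a*b=\lambda_{a}(b)b^{-1}$ — and since $B,C$ are trivial skew braces (so $\lambda_{b}$ fixes $B$ pointwise for every $b\in B$ and $\lambda_{c}$ fixes $C$ pointwise for every $c\in C$), any inclusion $A'\subseteq B\cap C$ forces $\lambda_{b}$ and $\lambda_{c}$, hence (as $A=B\circ C$ and $\lambda$ is multiplicative) every $\lambda_{a}$, to fix $A'$ pointwise; this is exactly $A*A'=1$.

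To identify $A'$, I would write a general element of $A$ as $p\circ q$ with $p\in B$, $q\in C$ and use $\lambda_{x\circ y}=\lambda_{x}\lambda_{y}$ in the form $(x\circ y)*z=\bigl(x*(y*z)\bigr)\,(y*z)\,(x*z)$ to rewrite each generator $x*y$ of $A'$ as a product of terms $b*a$ and $c*a$ with $b\in B$, $c\in C$, $a\in A$. Because $B$ and $C$ are right ideals, $B*A\subseteq B$ and $C*A\subseteq C$, so $A'$ is the subgroup of $(A,\cdot)$ generated by $B*A\cup C*A$, and the task reduces to the two inclusions $B*A\subseteq C$ and $C*A\subseteq B$ — equivalently, to showing that $B$ and $C$ are (two-sided) ideals, not merely right ideals.

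This last point is the crux, and the step I expect to be the main obstacle, since it requires transferring normality from $(A,\circ)$ to $(A,\cdot)$. Here I would argue as follows. From $B$ being a right ideal one gets $B\circ a\subseteq Ba$ for every $a\in A$; feeding this (applied both to $a$ and to $\overline{a}$) into $\circ$-normality $a\circ B=B\circ a$ forces $B\circ a=Ba$, hence $\lambda_{a}(B)=a^{-1}Ba$ for all $a$, and likewise $\lambda_{a}(C)=a^{-1}Ca$. Combined with $\lambda_{b}|_{B}=\mathrm{id}$ and $\lambda_{c}|_{C}=\mathrm{id}$, this shows $B\cap C$ is normalized by both $B$ and $C$ in $(A,\cdot)$, hence — using $A=BC$ as a set, which holds since $p\circ q=p(p*q)q\in Bq$ — normal in $(A,\cdot)$; as it is also $\circ$-normal and $\lambda$-invariant, $B\cap C$ is an ideal. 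Quotienting by $B\cap C$ reduces to the case $B\cap C=1$, where $(A,\circ)=B\times C$ is an internal direct product and $A=BC=CB$ with unique factorizations; in that case I would re-examine $b*c$ and $c*b$ using $b\circ c=c\circ b$ together with $b*c\in B$ and $c*b\in C$ to force $B*C=C*B=1$, i.e. that $A$ is trivial, which pulled back says $A'\subseteq B\cap C$ and finishes the proof. The delicate part is exactly this final computation in the direct-product situation, where triviality, $\circ$-normality, the right-ideal condition, and $A=B\circ C$ must all be used together; if a direct proof that $B$ and $C$ are ideals turns out to be awkward, the reduction to $B\cap C=1$ is the safety net.
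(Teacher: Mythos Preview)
Your reduction is sound: the identity $\lambda_a(B)=a^{-1}Ba$ (which does follow from right-ideality plus $\circ$-normality, via $B\circ a\subseteq Ba$ and the same for $\overline{a}$, using $\lambda_a(\overline{a})=a^{-1}$) lets you check that $B\cap C$ is $\cdot$-normal, hence an ideal, and the passage to $\bar A=A/(B\cap C)$ is legitimate. The problem is what you call the ``delicate part'': in the reduced situation $B\cap C=1$ you assert that $A$ must be trivial, but you give no argument, and none of the ingredients you list (the equalities $b\circ c=c\circ b$, the memberships $b*c\in B$, $c*b\in C$, unique $BC$- and $CB$-factorizations) combine in any evident way to force $b*c=1$. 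Writing out $b(b*c)c=c(c*b)b$ just expresses one element in its $BC$- and $CB$-forms; it does not pin down $b*c$. In fact you are trying to prove something strictly stronger than the theorem --- $A'\subseteq B\cap C$ rather than merely $A*A'=1$ --- and even granting the theorem it is not clear that the stronger statement holds.

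The paper avoids this entirely. It does not attempt to locate $A'$ inside $B\cap C$; instead it shows directly that $\lambda_y(b*c)=b*c$ for every $y\in C$ (and by symmetry $\lambda_x(c*b)=c*b$ for $x\in B$), which together with $\lambda_x(b*c)=b*c$ (triviality of $B$) and $A=B\circ C$ gives $A*A'=1$. The key tool is the conjugation formula $\lambda_y(b*c)=(y\circ b\circ\overline{y})*\lambda_y(c)$; since $C$ is trivial $\lambda_y(c)=c$, and since $C$ is $\circ$-normal one rewrites $y\circ b\circ\overline{y}=b\circ(\overline{b}\circ y\circ b\circ\overline{y})$ with the bracketed factor in $C$, whence $(y\circ b\circ\overline{y})*c=b*c$ by Lemma~\ref{formulas1}. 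This two-line computation is exactly the step your sketch is missing; if you try to finish your reduced case you will find yourself doing essentially this calculation anyway, so the detour through $B\cap C$ buys nothing.
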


\begin{thm}\label{thm:right} Let $(A,{\cdot \,},\circ)$ be a skew brace such that $A = B\cdot C$ for some sub-skew braces $B$ and $C$ satisfying all of the following:
\begin{enumerate}[$(1)$]
\item $B$ and $C$ are trivial skew braces,
\item $B$ and $C$ are normal subgroups of $(A,{\cdot\, })$,
\item $B$ and $C$ are left ideals in $A$.
\end{enumerate} 
Then $A^{(3)}=1$, namely $A$ is right nilpotent of index at most $3$.
\end{thm}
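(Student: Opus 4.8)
The plan is to establish the stronger containment $A' = A*A \subseteq B\cap C$, from which the theorem is immediate. Indeed $B\cap C$, being a sub-skew brace of the trivial skew braces $B$ and $C$, is itself trivial, so for $z\in B\cap C$ and an arbitrary $a = \beta\gamma \in A = BC$ (with $\beta\in B$, $\gamma\in C$) one has $z\circ a = z\circ(\beta\gamma) = (z\circ\beta)z^{-1}(z\circ\gamma) = (z\beta)z^{-1}(z\gamma) = za$, using triviality of $B$ on $\{z,\beta\}$ and of $C$ on $\{z,\gamma\}$; hence $z*a = 1$ for every $a$, i.e. $(B\cap C)*A = 1$, and therefore $A^{(3)} = A'*A \subseteq (B\cap C)*A = 1$.

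To prove $A'\subseteq B\cap C$ I would use the two identities $a*(bc) = (a*b)\cdot b(a*c)b^{-1}$ and $(a\circ b)*c = \lambda_a(b*c)\cdot(a*c)$, valid in any skew brace; here $\lambda_a$ denotes the map $x\mapsto a^{-1}(a\circ x)$, so that $a*x = \lambda_a(x)x^{-1}$, each $\lambda_a$ lies in $\Aut(A,{\cdot\,})$, and $\lambda\colon(A,\circ)\to\Aut(A,{\cdot\,})$ is a homomorphism. First I would record the consequences of the hypotheses: triviality of $B$ and $C$ says $B*B = C*C = 1$, equivalently $\lambda_b|_B = \mathrm{id}$ and $\lambda_c|_C = \mathrm{id}$; the left-ideal hypothesis says $\lambda_a(B) = B$ and $\lambda_a(C) = C$ for all $a$, equivalently $A*B\subseteq B$ and $A*C\subseteq C$; and the left-ideal property also gives $BC = B\circ C$, so every element of $A$ can be written as $\beta\circ\gamma$ with $\beta\in B$, $\gamma\in C$. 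Now take $a,b\in A$, write $a = \beta_1\circ\gamma_1$ and $b = \beta_2\gamma_2$, substitute into the two identities, discard the terms $\beta_1*\beta_2\in B*B$ and $\gamma_1*\gamma_2\in C*C$, and use $\lambda_{\beta_1}|_B=\mathrm{id}$ on $\gamma_1*\beta_2\in C*B\subseteq B$; the whole expression collapses to $a*b = (\gamma_1*\beta_2)\cdot\beta_2(\beta_1*\gamma_2)\beta_2^{-1}$. Hence $A'$ is generated, as a subgroup of $(A,{\cdot\,})$, by the cross terms $C*B$ together with the $\cdot$-conjugates of $B*C$.

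It remains to show these cross terms lie in $B\cap C$ — the skew-brace analogue of the classical fact $[H,K]\subseteq H\cap K$, and the step I expect to be the main obstacle. The "easy halves" are the left-ideal hypothesis itself: $b*c = \lambda_b(c)c^{-1}\in C$ because $\lambda_b(c)\in\lambda_b(C) = C$, so $B*C\subseteq C$, and symmetrically $C*B\subseteq B$; combined with the shape of the generators just found, this already gives $A'\subseteq BC$. The real content lies in the complementary inclusions $B*C\subseteq B$ and $C*B\subseteq C$; granting them, $A'\subseteq B\cap C$ (the $\cdot$-conjugates of $B*C$ remain inside $B\cap C$ because $B\cap C$ is normal in $(A,{\cdot\,})$), and the theorem follows as in the first paragraph. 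Here I expect the normality of $B$ and $C$ in $(A,{\cdot\,})$ to enter essentially, exactly as $H,K\trianglelefteq G$ does in Ito's commutator calculation: the claim $b*c = \lambda_b(c)c^{-1}\in B$ is equivalent to $\lambda_b$ inducing the identity on $C/(B\cap C)\cong A/B$, and the plan is to deduce this from the relation $BC = B\circ C$ together with $B,C\trianglelefteq(A,{\cdot\,})$, by rewriting $b\circ c$ as a $\cdot$-product lying in both cosets $bC$ and $cB$ and comparing.
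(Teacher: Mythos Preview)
Your reduction in the first paragraph is correct, as is your decomposition $a*b = (\gamma_1*\beta_2)\cdot\beta_2(\beta_1*\gamma_2)\beta_2^{-1}$ (this is essentially the paper's Proposition~\ref{prop:A'}). The ``easy halves'' $B*C\subseteq C$ and $C*B\subseteq B$ also hold. However, the ``hard halves'' $B*C\subseteq B$ and $C*B\subseteq C$ --- and hence the containment $A'\subseteq B\cap C$ on which your whole argument rests --- are \emph{false} in general under the stated hypotheses, so the proposed route cannot be completed.

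A minimal counterexample: take $(A,{\cdot\,})=(\mathbb{Z}/6\mathbb{Z},+)$ and set $a\circ b = a + (-1)^{a}b$, so that $\lambda_a(b)=(-1)^a b$ and $(A,\circ)\cong S_3$. Put $B=\{0,3\}$ and $C=\{0,2,4\}$. Both are normal in the abelian group $(A,{\cdot\,})$; both are left ideals since each $\lambda_a=\pm\mathrm{id}$ preserves every subgroup; $B$ is a trivial sub-brace because $3\circ 3 = 0 = 3+3$, and $C$ is trivial because $\lambda_c=\mathrm{id}$ for even $c$; and $A=B+C$. Yet
\[
3*2 \;=\; \lambda_3(2)-2 \;=\; -4 \;=\; 2\;\notin\; B,
\]
so $B*C\not\subseteq B$. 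In fact here $A' = 2\mathbb{Z}/6\mathbb{Z} = C$, while $B\cap C=\{0\}$. (The theorem itself of course holds: $A^{(3)}=A'*A = C*A = 0$ since $\lambda_c=\mathrm{id}$ for all $c\in C$.) So the analogy with the group-theoretic inclusion $[H,K]\subseteq H\cap K$ breaks down for skew braces, and normality in $(A,{\cdot\,})$ is not enough to force $b*c\in B$.

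The paper does not try to place $b*c$ inside $B$. After the same reduction to the generators $b*c$ and $c*b$ (via Corollary~\ref{cor:gen}), it shows $(b*c)*a=1$ directly. Since $b*c\in C$, triviality of $C$ gives $(b*c)*y=1$ for $y\in C$, leaving only $(b*c)*x=1$ for $x\in B$. This is obtained by rewriting $b*c$ as a $\circ$-word, namely $b*c=\lambda_b(c)\circ\overline{c}$, and then computing
\[
\lambda_{b*c}(x)\;=\;\lambda_{\overline{b}\circ(b*c)}(x)\;=\;\lambda_{c\circ(\overline{c}\circ(b^{-1}c))\circ\overline{c}}(x)\;=\;\lambda_c(\lambda_{\overline{c}}(x))\;=\;x,
\]
where the middle step uses that $\overline{c}\circ(b^{-1}c)=c^{-1}\lambda_{\overline{c}}(b^{-1})c$ lies in $B$ (here is where normality of $B$ in $(A,{\cdot\,})$ is actually used) and $\lambda_{\overline{c}}(x)\in B$, so that the inner $\lambda$ acts trivially by triviality of $B$.
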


We remark that Theorem \ref{thm:right} was already known by \cite[Theorem 3.5]{factorization}. But we shall still give a proof which we believe is a bit easier than the one in \cite{factorization}.

\vspace{2mm}

It is clear that Proposition \ref{prop:class2} is a special case of both Theorems \ref{thm:left} and \ref{thm:right} when $(A,{\cdot\, },\circ)$ is taken to be an almost trivial skew brace.

\vspace{2mm}

Next, we consider It\^{o}'s Theorem (Proposition \ref{prop:metabelian}) whose generalization is much more difficult. As noted in \cite[Example 3.7]{factorization}, there exists a skew brace $(A,{\cdot\, },\circ)$ such that $A=B\cdot C$ for some sub-skew braces $B$ and $C$ satisfying: 
\begin{enumerate}[$(1)$]
\item $B$ and $C$ are left ideals in $A$ which are trivial as skew braces,
\item $B$ is a normal subgroup of $(A,{\cdot\, })$ but $C$ is not,
\end{enumerate} 
but $(A,{\cdot\, },\circ)$ is not meta-trivial. This means that we cannot simply drop the normality condition. The condition of being left or right ideals in $A$ have to be removed also because they correspond to normal subgroups in an almost trivial skew brace.

\vspace{2mm}

Our idea is to use the opposite skew brace introduced in \cite{opposite}. The \emph{opposite} \par \noindent of a skew brace $(A,{\cdot \,},\circ)$ is the same set $A$ equipped with the operations
\[ a \cdot'b = b\cdot a \mbox{ and }a\circ' b = a\circ b.\]
In other words, we replace the dot operation with its opposite $\cdot^{\op}$ and keep the circle operation $\circ$ unchanged. One easily checks that $(A,{\cdot^{\op}\, },\circ)$ is also a skew brace, and we shall denote it by $A^{\op}$. Here is our main result:

\begin{thm}\label{thm:ito}Let $(A,{\cdot \,},\circ)$ be a skew brace such that $A = B \cdot C$ or $A=B\circ C$ for some sub-skew braces $B$ and $C$ satisfying both of the following:
\begin{enumerate}[$(1)$]
\item $B$ and $C$ are trivial skew braces,
\item $B$ and $C$ are both left and right ideals in $A^{\op}$.
\end{enumerate} 
Then $A'$ is a trivial skew brace, namely $A$ is meta-trivial.
\end{thm}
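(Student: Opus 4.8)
The plan is to reduce the circle-product case to the dot-product case by passing to the opposite skew brace, and then to prove the dot-product case by a direct computation with the $*$-operation that mirrors Itô's original commutator calculation. First I would observe that if $A = B\circ C$ with $B,C$ trivial sub-skew braces that are left and right ideals in $A^{\op}$, then in $A^{\op}$ we have $a\cdot^{\op} b = b\cdot a$, and $B,C$ are still sub-skew braces (a sub-skew brace of $A$ is a sub-skew brace of $A^{\op}$), still trivial, and the product $A = B\circ C = B\circ^{\op} C$ becomes a $\circ$-product in $A^{\op}$; moreover the roles of left and right ideals are preserved since the $*$-operation of $A^{\op}$ is built from the same data. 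The only subtlety is matching up which of the two stated forms ($B\cdot C$ versus $B\circ C$) one lands in after taking opposites — I expect that $A^{\op}$ exhibits a $\cdot^{\op}$-factorization precisely when $A$ has a $\circ$-factorization, or vice versa, so that the two cases of the theorem are genuinely exchanged by $A\mapsto A^{\op}$, and it suffices to treat one of them.

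Next, for the dot-product case $A = B\cdot C$, the goal is to show $A' = A*A$ is a trivial skew brace, i.e.\ $(A')*(A')$ — equivalently the restriction of $*$ to $A'$ — is trivial. The strategy is Itô-style: expand a general element $a*b$ with $a = b_1 c_1$, $b = b_2 c_2$ ($b_i\in B$, $c_i\in C$) using the standard identities for $*$, namely $x*(yz) = (x*y)\cdot y\cdot (x*z)\cdot y^{-1}$ (which follows from the brace relation), bilinearity-type formulas, and the fact that $b_i * b_j = 1$ and $c_i * c_j = 1$ since $B$ and $C$ are trivial. This should show that $A*A$ is generated by the "cross terms" $B*C$ and $C*B$ together with conjugates, and that both $B*C$ and $C*B$ land inside $B\cap C$ — here is where the left/right ideal hypotheses on $A^{\op}$ (which, unwound, say $B*^{\op}A^{\op}\subseteq B$, $A^{\op}*^{\op} B\subseteq B$, and likewise for $C$) are used to confine the cross terms to the intersection, just as normality confines $[H,K]$ to $H\cap K$ in the group case. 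I would then need the companion fact that $*$ restricted to $B\cap C$ is trivial: since $B$ is a trivial skew brace and $B\cap C\subseteq B$, any $x,y\in B\cap C$ satisfy $x*y=1$.

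The heart of the argument, and the step I expect to be the main obstacle, is the passage from "$A*A\subseteq B\cap C$ (up to conjugation) and $*$ is trivial on $B\cap C$" to "$A'$ is a \emph{sub-skew brace} on which $*$ is trivial." Unlike the group case, $A' = A*A$ is defined as a subgroup of $(A,{\cdot\,})$ only, and one must verify it is also a subgroup of $(A,\circ)$ and is closed under the relevant structure, and that the elements one produced (which are a priori only generators) all have trivial $*$-interaction — this requires care because $*$ is not bilinear and conjugation by group elements can move things around. Concretely I would need lemmas of the form: if $X*Y\subseteq I$ for an ideal or left/right ideal $I$ on which $*$ vanishes, then $*$ vanishes on the subgroup generated by $X*Y$; this should follow from the identities $x*(y\cdot z)$, $(x\cdot y)*z$, and the behaviour of $*$ under the $\lambda$-action $\lambda_a(b) = a\circ b \cdot a^{-1}$, but assembling these into a clean induction on word length in the generators of $A'$ is the delicate part. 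I would also want to double-check that the left/right ideal hypotheses in $A^{\op}$ translate into exactly the containments needed (rather than their "wrong-sided" versions), since the opposite construction swaps the sides on which the dot-group acts.

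Finally, with $A*A \subseteq B\cap C$ established and $*$ shown to vanish on any subgroup it generates inside $B\cap C$, one concludes $A' * A' = 1$, i.e.\ $A'$ is a trivial skew brace, which is the assertion that $A$ is meta-trivial. I would present the proof by first stating and proving the two or three auxiliary $*$-identity lemmas (expansion of $*$ over $\cdot$-products on each side, interaction of $*$ with $\lambda$), then the opposite-skew-brace reduction of the $\circ$-case to the $\cdot$-case, and then the main Itô-type computation; this keeps the bilinear-failure bookkeeping isolated in the lemmas.
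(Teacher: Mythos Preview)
Your proposal has a genuine gap at its core: the claim that $B*C$ and $C*B$ land inside $B\cap C$ is not justified by the hypotheses and is in fact false in general. Note that even in the classical It\^{o} setting this fails --- for $G=HK$ with $H,K$ merely abelian (not normal), $[H,K]$ need not lie in $H\cap K$; that containment is the content of Proposition~\ref{prop:class2}, not of It\^{o}'s Theorem. You are effectively trying to reproduce the argument of Theorems~\ref{thm:left} and~\ref{thm:right} (where $B,C$ are ideals in $A$ and the cross terms really do sit in $B\cap C$), but the hypotheses of Theorem~\ref{thm:ito} concern ideals in $A^{\op}$, and unwinding them gives containments like $c^{-1}\lambda_b^{\op}(c)\in B$ rather than $\lambda_b(c)c^{-1}=b*c\in B$. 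These are genuinely different conditions, and no amount of bookkeeping with the $*$-identities will convert one into the other.

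Your opposite-brace reduction is also off track. The two cases $A=B\cdot C$ and $A=B\circ C$ are not exchanged by passing to $A^{\op}$; rather, the paper shows (Lemma~\ref{lem:product}) that under the ideal hypotheses each factorization implies the other, so both hold simultaneously. If you did pass to $A^{\op}$, the hypothesis ``left and right ideals in $A^{\op}$'' would become ``left and right ideals in $(A^{\op})^{\op}=A$'', which is a different (and unavailable) assumption. What the paper actually does is far more delicate: it writes $b*c$ as a four-term $\circ$-word $c^{-1}\circ b_1^{-1}\circ c_2\circ b_2$ with $b_1,b_2\in B$ and $c_2\in C$ (Lemma~\ref{b*c}), reduces $(b*c)*(x*y)=1$ to the equality $\lambda_{b_1\circ c}(x*y)=\lambda_{c_2\circ b_2}(x*y)$, and then verifies this by explicitly comparing both sides via Lemma~\ref{lem:compare} and a sequence of auxiliary elements $d,e,f,g$ that are painstakingly shown to lie in $B\cap C$. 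The intersection $B\cap C$ does play a role, but only for these auxiliary correction terms, not for $b*c$ itself.
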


It\^{o}'s Theorem may be recovered from Theorem \ref{thm:ito} when $(A,{\cdot\, },\circ)$ is taken to be an almost trivial skew brace (the opposite skew brace is trivial and so condition (2) is redundant in this case). 

\begin{remark}\label{remark intro}In Theorems \ref{thm:left}, \ref{thm:right}, and \ref{thm:ito}, although only one of $A = B\cdot C$  and $A = B \circ C$ is assumed, as we shall show in Lemma \ref{lem:product}, we in fact have both factorizations by the other hypotheses.\end{remark}

In Section \ref{construct sec}, we shall describe a method to construct skew braces $(A,{\cdot\,},\circ)$ which admit exact factorizations $A = B\cdot C$ and $A=B \circ C$ by some sub-skew braces $B$ and $C$ satisfying all of the following:
\begin{enumerate}[(i)]
\item $B$ and $C$ are trivial skew braces,
\item $B$ is a left ideal in $A^{\op}$ and $C$ is a right ideal in $A^{\op}$,
\item $B$ and $C$ are left ideals in $A$.
\end{enumerate}
In Section \ref{ex sec}, we then apply our construction to exhibit a family of such skew braces $(A,{\cdot\, },\circ)$ which are not meta-trivial. This means that condition (2) in Theorem \ref{thm:ito} cannot be relaxed to condition (ii), even when (iii) is imposed.

\begin{remark}
In the theory of group factorizations, we also have the Kegel-Wielandt theorem. But as illustrated in \cite[Example 2.15]{factorization}, one cannot expect a naive analogue of this for skew braces. Perhaps our new approach to bring the opposite skew brace into the picture can shed light on this problem.\end{remark}

\section{Basic facts about skew braces}

In this section, let $A=(A,{\cdot \,},\circ)$ be a skew brace, and let $A^{\op}=(A,{\cdot^{\op}\,},\circ)$ denote its opposite skew brace.

\vspace{2mm}

It is well-known that every skew brace gives rise to a \emph{lambda map}. For the skew brace $A$, the associated lambda map is the group homomorphism
\[ \lambda: (A,\circ) \longrightarrow \mathrm{Aut}(A,{\cdot\,});\,\ a\mapsto\lambda_a,\mbox{ where } \lambda_a(b) = a^{-1}(a\circ b).\]
In terms of this map $\lambda$, we then have
\[ a\circ b = a\lambda_a(b),\,\ ab = a\circ \lambda_{\overline{a}}(b),\,\ a*b = \lambda_a(b)b^{-1}.\]
It follows from the definition that
\[ (A,{\cdot \,},\circ)\mbox{ is a trivial skew brace} \iff \lambda_a(b) = b\mbox{ for all }a,b\in A. \]
For any subgroup $I$ of $(A,{\cdot\,})$, we clearly have
\begin{align*}
I\mbox{ is a left ideal in $A$}&\iff \lambda_a(x) \in I\mbox{ for all }x\in I,\, a\in A,\\
I\mbox{ is a right ideal in $A$}&\iff \lambda_x(a)a^{-1} \in I\mbox{ for all }x\in I,\, a\in A.
\end{align*}
Notice that $\mathrm{Aut}(A,{\cdot\,})=\mathrm{Aut}(A,{\cdot^{\op}})$. For the skew brace $A^{\op}$, converting the operation $\cdot^{\op}$ to ${\cdot\, }$, the associated lambda map is the group homomorphism
\[ \lambda^{\op}: (A,\circ) \longrightarrow \mathrm{Aut}(A,{\cdot\,});\,\ a\mapsto\lambda^{\op}_a,\mbox{ where } \lambda_a^{\op}(b) = (a\circ b)a^{-1}.\]
In terms of this map $\lambda^{\op}$, we then have
\[ a\circ b = \lambda^{\op}_a(b)a,\,\ ab = b\circ \lambda^{\op}_{\overline{b}}(a),\,\ a*b = a^{-1}\lambda_a^{\op}(b)ab^{-1}.\]
It follows from the definition that
\[ (A,{\cdot \,},\circ)\mbox{ is a trivial skew brace} \iff \lambda^{\op}_a(b) = aba^{-1}\mbox{ for all }a,b\in A. \]
For any subgroup $I$ of $(A,{\cdot\,})$, as above we have
\begin{align*}
I\mbox{ is a left ideal in $A^{\op}$}&\iff \lambda^{\op}_a(x) \in I\mbox{ for all }x\in I,\, a\in A,\\
I\mbox{ is a right ideal in $A^{\op}$}&\iff a^{-1}\lambda^{\op}_x(a) \in I\mbox{ for all }x\in I,\, a\in A.
\end{align*}
Note that subgroups of $(A,{\cdot\, })$ and $(A,{\cdot^{\op}})$ are the same thing.

\vspace{2mm}

When $(A,{\cdot\, },\circ)$ is an almost trivial skew brace, we have
\[\lambda_a(b) = a^{-1}ba \mbox{ and }\lambda_a^{\op}(b) = b\]
 for all $a,b\in A$. In other words, the action of $\lambda_a$ coincides with conjugation by $a^{-1}$, while $\lambda_a^{\op}$ gives the trivial action.
 
 \vspace{2mm}
 
The commutator $[a,b] = aba^{-1}b^{-1}$ in groups satisfies the identities
\begin{align*}\notag
[a,bc]  &= [a,b] \cdot b [a,c]  b^{-1},\\\label{commutator}
[ab,c] & = a [b,c]  a^{-1}\cdot [a,c].
\end{align*} 
They may be generalized in the context of skew braces as follows. 

\begin{lem}\label{formulas1}
For any $a,b,c\in A$, we have the identities
\begin{align*}
a*(bc) &= (a*b)\cdot b  \cdot(a*c) \cdot b^{-1},\\ 
(a\circ b)* c &= (a *(b*c)) \cdot (b*c)\cdot (a*c).
\end{align*}
In particular, this implies that
\begin{align*}
 a * (bc) &= a*b &&\hspace{-3.5cm}\mbox{whenever }a*c=1,\\
 (a\circ b)*c & =a*c &&\hspace{-3.5cm}\mbox{whenever }b*c=1.
 \end{align*}
\end{lem}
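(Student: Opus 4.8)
The plan is to reduce both identities to direct computations using the two standard descriptions $a*b = a^{-1}(a\circ b)b^{-1}$ and $a*b = \lambda_a(b)b^{-1}$, together with the brace relation and the fact that $\lambda : (A,\circ)\to\Aut(A,{\cdot\,})$ is a group homomorphism.

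For the first identity I would start from $a*(bc) = a^{-1}\bigl(a\circ(bc)\bigr)(bc)^{-1}$ and apply the brace relation $a\circ(bc) = (a\circ b)\,a^{-1}\,(a\circ c)$ to obtain $a*(bc) = a^{-1}(a\circ b)\,a^{-1}(a\circ c)\,c^{-1}b^{-1}$. Inserting $b^{-1}b$ in the middle and regrouping then yields $\bigl(a^{-1}(a\circ b)b^{-1}\bigr)\cdot b\cdot\bigl(a^{-1}(a\circ c)c^{-1}\bigr)\cdot b^{-1} = (a*b)\cdot b\cdot(a*c)\cdot b^{-1}$, which is what we want.

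For the second identity I would use the description $a*b = \lambda_a(b)b^{-1}$, equivalently $\lambda_a(b) = (a*b)\cdot b$. Since $\lambda$ is a homomorphism we have $\lambda_{a\circ b} = \lambda_a\circ\lambda_b$, and since each $\lambda_a$ is an automorphism of $(A,{\cdot\,})$ it respects products, so
\[
(a\circ b)*c = \lambda_a\bigl(\lambda_b(c)\bigr)c^{-1} = \lambda_a\bigl((b*c)\cdot c\bigr)c^{-1} = \lambda_a(b*c)\cdot\lambda_a(c)\cdot c^{-1}.
\]
Rewriting $\lambda_a(b*c) = \bigl(a*(b*c)\bigr)\cdot(b*c)$ and $\lambda_a(c) = (a*c)\cdot c$, and cancelling the trailing $c\cdot c^{-1}$, gives $(a\circ b)*c = \bigl(a*(b*c)\bigr)\cdot(b*c)\cdot(a*c)$.

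The two ``in particular'' statements are then immediate: in the first identity $a*c=1$ collapses the last two factors, leaving $a*(bc) = (a*b)\cdot b\cdot b^{-1} = a*b$; in the second, $b*c=1$ turns the first factor into $a*(b*c) = a*1$, which equals $1$ because $\lambda_a(1)=1$, so only $a*c$ survives. I do not expect any genuine obstacle here — the computation is essentially bookkeeping with the group axioms and the brace relation; the only point requiring a little care is invoking the homomorphism property of $\lambda$ and the automorphism property of $\lambda_a$ at the right moments in the second computation.
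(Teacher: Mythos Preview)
Your proof is correct and essentially matches the paper's. The only cosmetic difference is in the first identity: you expand $a\circ(bc)$ via the brace relation, whereas the paper writes $a*(bc)=\lambda_a(bc)(bc)^{-1}=\lambda_a(b)\lambda_a(c)c^{-1}b^{-1}$ using that $\lambda_a$ is multiplicative --- but these are the same computation in different notation, and your second identity argument coincides with the paper's verbatim.
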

\begin{proof}This is \cite[Lemma 2.1]{factorization}, but for the convenience of the reader, we give a proof here as well. We compute that
\begin{align*}
 a*(bc) &= \lambda_a(bc)(bc)^{-1}\\
 & = \lambda_a(b)\lambda_a(c) c^{-1}b^{-1}\\
& = \lambda_a(b)b^{-1}\cdot b\cdot \lambda_a(c) c^{-1}\cdot b^{-1}\\
&  =(a*b)\cdot b \cdot(a*c)  \cdot b^{-1},\\
 (a\circ b)* c& = \lambda_{a\circ b}(c)c^{-1}\\
 & = \lambda_a(\lambda_b(c))c^{-1}\\
 & = \lambda_a(\lambda_b(c)c^{-1})\lambda_a(c)c^{-1}\\
 & = \lambda_a(b*c)(b*c)^{-1}(b*c) (a*c)\\
 & = (a *(b*c)) \cdot (b*c)\cdot (a*c),
 \end{align*}
which is as claimed.
\end{proof}

Rewriting the first identity in Lemma \ref{formulas1}, we obtain:

\begin{cor}\label{cor:conj}For any $a,b,c\in A$, we have the identity
\[ a(b*c)a^{-1} = (b*a)^{-1} (b*(ac)).\]
\end{cor}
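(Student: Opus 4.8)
The plan is simply to specialize the first identity of Lemma \ref{formulas1} and rearrange. That identity reads
\[ a*(bc) = (a*b)\cdot b \cdot (a*c)\cdot b^{-1} \]
for all $a,b,c\in A$. The roles of the three variables there are not symmetric, so I first relabel them to match the shape of the claimed identity: I want the inner product to be $ac$ and the outer ``commutator-like'' factor to be $b*a$, so I would substitute $a\mapsto b$, $b\mapsto a$, $c\mapsto c$. This turns the identity into
\[ b*(ac) = (b*a)\cdot a \cdot (b*c)\cdot a^{-1}. \]

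Now I would left-multiply both sides by $(b*a)^{-1}$ (this is legitimate since $b*a\in A$ and $\cdot$ is a group operation), obtaining
\[ (b*a)^{-1}\,(b*(ac)) = a\,(b*c)\,a^{-1}, \]
which is exactly the asserted identity after reading it from right to left. No further computation is needed; the whole content is the substitution and one cancellation in the group $(A,{\cdot\,})$.

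There is no real obstacle here — the only thing to be careful about is getting the relabeling correct, since a wrong choice of permutation of $\{a,b,c\}$ would produce conjugation by the wrong element or the wrong ``$*$'' factor on the left. Since the statement is essentially a cosmetic rewriting of Lemma \ref{formulas1}, I would present it in one or two lines, perhaps even inline, noting only that it is obtained by substituting $(a,b,c)\mapsto(b,a,c)$ in the first formula of that lemma and transposing the term $(b*a)$ to the other side.
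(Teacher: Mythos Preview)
Your proposal is correct and matches the paper's approach exactly: the paper states that the corollary is obtained by ``rewriting the first identity in Lemma~\ref{formulas1}'', which is precisely the substitution $(a,b,c)\mapsto(b,a,c)$ followed by moving $(b*a)$ to the other side that you describe. Your write-up is in fact more explicit than the paper's one-line justification.
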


The commutator $[a,b] = aba^{-1}b^{-1}$ in groups behaves very nicely under the conjugation action $b^a = aba^{-1}$, namely $[b,c]^a= [b^a,c^a]$. The next lemma is a generalization of this in the context of skew braces.

\begin{lem}\label{formulas2}For any $a,b,c\in A$, we have the identity
\[ \lambda_a(b*c)  = (a\circ b \circ \overline{a}) * \lambda_a(c).\]
\end{lem}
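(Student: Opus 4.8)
The plan is to expand both sides using the two facts recorded just before the lemma: that $x*y=\lambda_x(y)y^{-1}$, and that $\lambda\colon(A,\circ)\to\Aut(A,{\cdot\,})$ is a group homomorphism whose values are automorphisms of $(A,{\cdot\,})$. Starting from the left, since $\lambda_a$ is a homomorphism of $(A,{\cdot\,})$,
\[
\lambda_a(b*c)=\lambda_a\bigl(\lambda_b(c)\,c^{-1}\bigr)=\lambda_a\lambda_b(c)\cdot\lambda_a(c)^{-1}.
\]

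For the right-hand side, first observe that because $\lambda$ is a homomorphism from $(A,\circ)$, we have $\lambda_{a\circ b\circ\overline{a}}=\lambda_a\lambda_b\lambda_{\overline a}=\lambda_a\lambda_b\lambda_a^{-1}$, using $\lambda_{\overline a}=\lambda_a^{-1}$. Hence
\[
(a\circ b\circ\overline{a})*\lambda_a(c)=\lambda_{a\circ b\circ\overline a}\bigl(\lambda_a(c)\bigr)\cdot\lambda_a(c)^{-1}=\lambda_a\lambda_b\lambda_a^{-1}\lambda_a(c)\cdot\lambda_a(c)^{-1}=\lambda_a\lambda_b(c)\cdot\lambda_a(c)^{-1}.
\]
Comparing the two displays gives the identity.

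There is essentially no obstacle here: the only point requiring any care is to keep straight in which group each inverse is taken, in particular to invoke $\lambda_{\overline a}=\lambda_a^{-1}$ (valid since $\lambda$ lands in a group of automorphisms and $\overline a$ is the $\circ$-inverse of $a$), rather than any relation involving $a^{-1}$. Everything else is a direct substitution, so I would present the computation in a single short \texttt{align*} block as above. If one prefers a more conceptual phrasing, the statement is exactly the assertion that the map $\lambda_a$ intertwines the "twisted conjugation" $b\mapsto a\circ b\circ\overline a$ on the first variable of $*$ with its action on the second variable — a skew-brace analogue of $[b,c]^{a}=[b^{a},c^{a}]$ — but the one-line verification above already proves it.
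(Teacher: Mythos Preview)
Your proof is correct and essentially identical to the paper's: both reduce each side to $\lambda_a\lambda_b(c)\cdot\lambda_a(c)^{-1}$ using that $\lambda$ is a $\circ$-homomorphism into $\Aut(A,{\cdot\,})$ and that $\lambda_{\overline a}=\lambda_a^{-1}$. The only cosmetic difference is that the paper runs the computation left-to-right in one chain (writing $\lambda_{a\circ b}(c)=\lambda_{a\circ b\circ\overline a}(\lambda_a(c))$) whereas you expand the two sides separately and compare.
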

\begin{proof} We compute that
\begin{align*}
\lambda_a(b*c) &= \lambda_a(\lambda_b(c)c^{-1})\\
&= \lambda_{a\circ b}(c) \lambda_{a}(c)^{-1}\\
&= \lambda_{a\circ b\circ\overline{a}}(\lambda_a(c)) \lambda_{a}(c)^{-1}\\
&= (a\circ b\circ \overline{a}) * \lambda_a(c),
\end{align*}
 which is as claimed.
\end{proof}

To prove Theorems \ref{thm:left}, \ref{thm:right}, and \ref{thm:ito}, we shall need to show that
\[ A*A' =1 ,\,\ A' * A =1,\,\ A'*A' =1, \]
respectively. For any subgroups $B$ and $C$ of $(A,{\cdot \,})$, obviously
\[ B*C = 1\iff b*c =1\mbox{ for all }b\in B\mbox{ and }c\in C.\]
Below, we show that it suffices to consider generators of $B$ and $C$, provided that the generators satisfy suitable conditions.

\begin{prop}\label{prop:gen} Let $B$ and $C$ be subgroups of $(A,{\cdot \,})$ which have $X$ and $Y$ as generating sets, respectively. 
\begin{enumerate}[$(a)$]
\item If $b*y= 1$ for all $b\in B$ and $y\in Y$, then $B*C = 1$.
\item If $X\subseteq Y$, and $x*y=1$ for all $x\in X$ and $y\in Y$, then $B*C=1$.
\end{enumerate}
\end{prop}


\begin{proof}[Proof of $(a)$] Note that $b*y^{-1}=1$ holds for all $b\in B$ and $y\in Y$ because 
\[b*y=1\iff \lambda_b(y) = y \iff \lambda_b(y^{-1}) =y^{-1}\iff b*y^{-1}=1. \]
Now, let $b\in B$ and $c\in C$. We may write
\[ c = y_1^{\epsilon_1}y_2^{\epsilon_2}\cdots y_\ell^{\epsilon_\ell}\mbox{ for some }y_1,y_2,\dots,y_\ell\in Y,\, \epsilon_1,\epsilon_2,\dots,\epsilon_\ell\in \{\pm1\}.\]
For $\ell=1$, we clearly have $b*c = b*y_1^{\epsilon_1}=1$. For $\ell\geq 2$, we have
\begin{align*}
 b*c &= b* (y_1^{\epsilon_1}(y_2^{\epsilon_2}\cdots y_\ell^{\epsilon_\ell}))\\\
 & = (b*y_1^{\epsilon_1})\cdot y_1^{\epsilon_1}\cdot (b*(y_2^{\epsilon_2}\cdots y_{\ell}^{\epsilon_\ell}))\cdot y_1^{-\epsilon_1} \end{align*}
by Lemma \ref{formulas1}. It then follows by induction on $\ell$ that $b*c=1$.
\end{proof}

\begin{proof}[Proof of $(b)$] Note that $x*y^{-1}=1$ holds for all $x\in X$ and $y\in Y$ as above. Moreover, observe that
\begin{equation}\label{xy}
 x* y^{\epsilon} = 1\iff \lambda_x(y^\epsilon) = y^{\epsilon} \iff y^\epsilon = \lambda_{\overline{x}}(y^{\epsilon}). \end{equation}
Since $X\subseteq Y$, this in particular implies that
\[ \overline{x} = \lambda_{\overline{x}}(x^{-1}) = x^{-1}\mbox{ for all }x\in X.\]
This in turn yields that
\[ x^{-1} * y = \lambda_{\overline{x}}(y)y^{-1} = yy^{-1}=1\mbox{ for all }x\in X\mbox{ and }y\in Y.\]
Thus, replacing $b$ by $x$ and $x^{-1}$, the same proof as in (a) shows that
\[ x*c = 1= x^{-1}*c\mbox{ for all }x\in X\mbox{ and }c\in C.\]
Now, let $b\in B$ and $c\in C$. We may write
\[ b = x_1^{\epsilon_1}x_2^{\epsilon_2}\cdots x_\ell^{\epsilon_\ell}\mbox{ for some }x_1,x_2,\dots,x_\ell\in X,\, \epsilon_1,\epsilon_2,\dots,\epsilon_\ell\in \{\pm1\}.\]
For $\ell=1$, we clearly have $b*c = x_1^{\epsilon_1}*c=1$. For $\ell\geq 2$, we have
\begin{align}\label{step}
b * c & = (x_1^{\epsilon_1}(x_2^{\epsilon_2}\cdots x_\ell^{\epsilon_\ell}))*c\\\notag
&= (x_1^{\epsilon_1}\circ \lambda_{\overline{x_1^{\epsilon_1}}}(x_2^{\epsilon_2}\cdots x_\ell^{\epsilon_\ell})) *c\\\notag
&= (x_1^{\epsilon_1}\circ (\lambda_{\overline{x_1^{\epsilon_1}}}(x_2^{\epsilon_2})\cdots \lambda_{\overline{x_1^{\epsilon_1}}}(x_\ell^{\epsilon_\ell}))) *c.
\end{align}
Since $\overline{x_1} = x_1^{-1}$, the subscript $\overline{x_1^{\epsilon_1}}$ is either $x_1$ or $\overline{x_1}$. In either case, because $X\subseteq Y$, we deduce from (\ref{xy}) and Lemma \ref{formulas1} that
\begin{align*}
b * c &= (x_1^{\epsilon_1}\circ (x_2^{\epsilon_2}\cdots x_\ell^{\epsilon_\ell})) *c\\
& = (x_1^{\epsilon_1} * ((x_2^{\epsilon_2}\cdots x_\ell^{\epsilon_\ell})*c)) ((x_2^{\epsilon_2}\cdots x_\ell^{\epsilon_\ell})*c)(x_1^{\epsilon_1}*c).
\end{align*}
It then follows by induction on $\ell$ that $b*c=1$.
\end{proof}


\begin{remark}In step (\ref{step}) of Proposition \ref{prop:gen}(b), we converted ${\cdot\,}$ to $\circ$ because the operation $*$ does not behave well with respect to ${\cdot\, }$ in the first argument. This is why we had to impose the extra condition $X\subseteq Y$ and the discussion prior to (\ref{step}) is needed. But this can be avoided in the case of a two-sided skew brace.

\vspace{2mm}

Recall that $(A,{\cdot\, },\circ)$ is said to be a \emph{two-sided skew brace} if 
\[ (a\cdot b)\circ c = (a\circ c)\cdot c^{-1} \cdot (b\circ c)\]
also holds for all $a,b,c\in A$. In this case, we have
\begin{align*}
(ab)*c & = (ab)^{-1}((ab)\circ c)c^{-1}\\
& = b^{-1}a^{-1} (a\circ c)c^{-1}(b\circ c)c^{-1}\\
& = b^{-1}\cdot a^{-1}(a\circ c)c^{-1} \cdot b\cdot b^{-1}(b\circ c)c^{-1}\\
& = b^{-1}\cdot (a*c)\cdot b \cdot (b*c),
\end{align*}
analogous to the first identity in the Lemma \ref{formulas1}. We can then use the same argument as in (a) to prove (b) without having to assume that $X\subseteq Y$. We do not need this fact but we mention it here for completeness.
\end{remark}



\begin{cor}\label{cor:gen} Let $S$ be a generating set of $A'$ as a subgroup of $(A,\cdot)$.
\begin{enumerate}[$(a)$]
\item If $a*y=1$ for all $a\in A$ and $y\in S$, then $A*A'=1$.
\item If $x*a=1$ for all $x\in S$ and $a\in A$, then $A'*A=1$.
\item If $x*y=1$ for all $x,y\in S$, then $A'*A'=1$.
\end{enumerate}
\end{cor}
\begin{proof}In (a),(b), and (c), respectively, take 
\[ (B,C,Y) = (A,A',S),\,\ (B,C,X,Y) = (A',A,S,A),\, (A',A',S,S),\]
and then apply Proposition \ref{prop:gen}. Observe that our choices of $X,Y$ in (b),(c) clearly satisfy $X\subseteq Y$ and so Proposition \ref{prop:gen}(b) indeed applies.
\end{proof}

We end this section with the following simple fact that was mentioned in Remark \ref{remark intro} in the introduction.
 
 \begin{lem}\label{lem:product}Let $B$ and $C$ be any sub-skew braces of $A$.
 \begin{enumerate}[$(a)$]
\item Assume that $A=B\cdot C$. If $B$ is a left ideal in $A$ or $A^{\op}$, then $A=B\circ C$.
\item Assume that $A=B\circ C$. If $B$ is a right ideal in $A$ or $A^{\op}$, then $A=B\cdot C$.
\end{enumerate}
\end{lem}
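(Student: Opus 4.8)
The plan is to prove both parts by the same mechanism, using the lambda map to convert between the two products. Consider part (a), so assume $A = B\cdot C$ with $B,C$ sub-skew braces and $B$ a left ideal in $A$ (respectively in $A^{\op}$). Since $B$ and $C$ are subgroups of $(A,\circ)$, the subset $B\circ C = \{b\circ c : b\in B,\, c\in C\}$ is a subset of $A$, and it suffices to show it equals $A$; since $|B\circ C|$ as a set of the form $\{b\circ c\}$ has size related to $|B||C|/|B\cap C|$ and likewise for $B\cdot C$, and the underlying sets $(A,\cdot)$ and $(A,\circ)$ are the same finite (or, with care, possibly infinite — see below) set, it is enough to show $A\subseteq B\circ C$. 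Take an arbitrary $a\in A$ and write $a = b\cdot c$ with $b\in B$, $c\in C$. The key identity is $b\cdot c = b\circ \lambda_{\overline b}(c)$, which is one of the basic formulas recorded just before Lemma~\ref{formulas1}. So $a = b\circ \lambda_{\overline b}(c)$, and I would like to rewrite this as $(b\circ b')\circ c'$ with $b'\in B$ and $c'\in C$; but more directly, I want to show $\lambda_{\overline b}(c)\in B\circ C$.

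Here is where the left-ideal hypothesis enters. First suppose $B$ is a left ideal in $A$, i.e. $\lambda_a(x)\in B$ for all $a\in A$, $x\in B$. This controls $\lambda$ on $B$, not on $C$, so instead I reverse the roles: write $a = b\cdot c$ and use the other basic formula $b\cdot c = c \circ \lambda_{\overline c}^{\op}(b)$ — wait, that formula reads $ab = b\circ \lambda^{\op}_{\overline b}(a)$, so $b\cdot c = c\circ \lambda^{\op}_{\overline c}(b)$. Thus $a = c\circ \lambda^{\op}_{\overline c}(b)$. If $B$ is a left ideal in $A^{\op}$, then $\lambda^{\op}_a(x)\in B$ for all $x\in B$, hence $\lambda^{\op}_{\overline c}(b)\in B$, and therefore $a = c\circ (\text{element of }B)\in C\circ B$. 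This gives $A = C\circ B$; applying the same argument or taking inverses in $(A,\circ)$ (which swaps the factors of a product $C\circ B$ into $\overline B\circ\overline C = B\circ C$, using that $B,C$ are subgroups of $(A,\circ)$) yields $A = B\circ C$. For the case $B$ a left ideal in $A$ itself, I would instead use $b\cdot c = b\circ\lambda_{\overline b}(c)$ directly together with normality-type bookkeeping: actually the cleanest route is to observe $\lambda_{\overline b}(c) = \overline b\circ(\overline b \cdot c)\cdot$-type manipulation — more carefully, $A = B\cdot C \Rightarrow A = A^{-1} = C^{-1}\cdot B^{-1} = C\cdot B$ (inverses in $(A,\cdot)$, using $B,C$ subgroups), and then for $a = c\cdot b$ we get $a = b\circ\lambda_{\overline b}(\text{something})$... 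The mildly fiddly point is matching up which of the four formulas pairs with which of the four ideal conditions, so I would just run the two-line computation in each of the two sub-cases and pick the formula that lands the $\lambda$-image inside $B$.

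Part (b) is dual: assume $A = B\circ C$ with $B$ a right ideal in $A$ or $A^{\op}$, and show $A = B\cdot C$. Now $b\circ c = b\cdot\lambda_b(c)$, so for $a = b\circ c$ we have $a = b\cdot\lambda_b(c)$, and I want $\lambda_b(c)\in C$; but the right-ideal condition controls $\lambda_x(a)a^{-1}\in B$ for $x\in B$, i.e. $b*$-type elements lie in $B$, not a statement about $\lambda_b(c)$ landing in $C$. So again I reverse: from $a = b\circ c$ and $b\circ c = \lambda^{\op}_b(c)\cdot b$ we get $a = \lambda^{\op}_b(c)\cdot b$; the right-ideal-in-$A^{\op}$ condition says $a^{-1}\lambda^{\op}_x(a)\in B$ for $x\in B$ — still about $\lambda^{\op}_x$ with $x\in B$. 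The honest fix: write $\lambda^{\op}_b(c)\cdot b$ and note $\lambda^{\op}_b(c) = (b\circ c)b^{-1}$, so $\lambda^{\op}_b(c)\cdot b = (b\circ c)$, circular. Instead use $b\circ c = (b*c)\cdot b \cdot c$? No. The right tool is: $b\circ c = b\cdot\lambda_b(c)$ and $\lambda_b(c) = (b*c)\cdot c$, so $a = b\cdot(b*c)\cdot c$. Since $C$ is a right ideal in $A$... but we only have $C$ a right ideal in $A^{\op}$, which gives $c^{-1}\lambda^{\op}_c(a)\in C$. Hmm — I would instead directly invoke symmetry: $(A,{\cdot^{\op}},\circ)$ is a skew brace with $B\circ C = B\circ^{\op} C = A$ and with $B$ a right ideal in $(A^{\op})^{\op} = A$, and apply part (b) to $A^{\op}$, concluding $A = B\cdot^{\op} C = C\cdot B = B\cdot C$ (last equality by $B,C$ subgroups of $(A,\cdot)$, hence of $(A,\cdot^{\op})$, being reversible). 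So the real content is a single computation; the main obstacle is purely organizational — keeping straight the eight combinations (two parts $\times$ two ideal-ambient-choices $\times$ the op/non-op formula) and exploiting the op-duality to halve the work — rather than anything conceptually hard.
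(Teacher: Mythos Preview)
Your approach is essentially the paper's: convert between $\cdot$ and $\circ$ via the identities $xy = x\circ\lambda_{\overline x}(y)$, $xy = y\circ\lambda^{\op}_{\overline y}(x)$, $x\circ y = x\lambda_x(y)$, $x\circ y = \lambda^{\op}_x(y)x$, choosing in each sub-case the one that lands the relevant $\lambda$- or $\lambda^{\op}$-image in $B$, and then use that $B\cdot C = C\cdot B$ and $B\circ C = C\circ B$ for subgroups. The paper simply lists the four matching identities explicitly --- $cb = c\circ\lambda_{\overline c}(b)$, $bc = c\circ\lambda^{\op}_{\overline c}(b)$, $b\circ c = b\,(\lambda_b(c)c^{-1})\cdot c$, $b\circ c = c\cdot(c^{-1}\lambda^{\op}_b(c))\,b$ --- one per sub-case, which avoids the tangles in your part (b) (you wrote $C$ where you meant $B$, and the op-duality detour, while valid, is unnecessary once you notice $b*c = \lambda_b(c)c^{-1}\in B$ handles the right-ideal-in-$A$ case and $c^{-1}\lambda^{\op}_b(c)\in B$ the right-ideal-in-$A^{\op}$ case); no cardinality argument is needed, so your finite/infinite aside can be dropped.
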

\begin{proof} For any $b\in B$ and $c\in C$, observe that
\begin{align*}
 cb &= c\circ \lambda_{\overline{c}}(b),&&\mbox{$\lambda_{\overline{c}}(b)\in B$ if $B$ is a left ideal in $A$},\\
bc& = c\circ \lambda_{\overline{c}}^{\op}(b), &&\mbox{$\lambda_{\overline{c}}^{\op}(b)\in B$ if $B$ is a left ideal in $A^{\op}$},\\
b\circ c&= b\lambda_{b}(c)c^{-1}\cdot c,&& \mbox{$\lambda_{b}(c)c^{-1}\in B$ if $B$ is a right ideal in $A$},\\
b\circ c &=  c\cdot c^{-1}\lambda_{b}^{\op}(c)b, &&\mbox{$c^{-1}\lambda_{b}^{\op}(c)\in B$ if $B$ is a right ideal in $A^{\op}$}.
\end{align*}
We then deduce the claims since $A=B\cdot C$ is equivalent to $A=C\cdot B$ and similarly $A=B\circ C$ is equivalent to $A=C \circ B$. \end{proof}

 \section{Products of trivial skew braces}

In this section, let $A=(A,{\cdot \,},\circ)$ be a skew brace. Also let $B$ and $C$ be any sub-skew braces of $A$. We shall establish some facts that we need in order to prove Theorems \ref{thm:left} and \ref{thm:right}. They are essentially \cite[Lemma 3.4]{factorization}, but both of $B$ and $C$ are assumed to be trivial skew braces and left ideals there. We shall prove them using as few hypotheses as possible.

\vspace{2mm}

Recall that $B*C$ is defined to be the subgroup of $(A,{\cdot \, })$ generated by the elements $b*c$ with $b\in B$ and $c\in C$. 

\begin{prop}\label{lem:normal}Assume that $A = B\cdot C$. If $B$ is a trivial skew brace, then $B*C$ is a normal subgroup of $(A,{\cdot \, })$. 
\end{prop}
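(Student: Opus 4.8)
The goal is to show that $B*C$ is normal in $(A,{\cdot\,})$, i.e. that $a(b*c)a^{-1}\in B*C$ for every $a\in A$ and every generator $b*c$ with $b\in B$, $c\in C$. Since $A=B\cdot C$, I can write $a = b'c'$ with $b'\in B$ and $c'\in C$, so it suffices to treat the two cases $a=b'\in B$ and $a=c'\in C$ separately and then combine. The natural tool is Corollary \ref{cor:conj}, which rewrites a conjugate $a(b*c)a^{-1}$ as $(b*a)^{-1}(b*(ac))$ — but note the roles there: it conjugates $b*c$ and produces something involving $b*(\text{–})$, so it is tailored to conjugation in the \emph{second} slot. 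So I expect the clean case to be conjugation by $c'\in C$: applying Corollary \ref{cor:conj} with the triple $(b,c',c)$ gives $c'(b*c)c'^{-1} = (b*c')^{-1}(b*(c'c))$, and since $c'c\in C$, both $b*c'$ and $b*(c'c)$ lie in $B*C$, so $c'(b*c)c'^{-1}\in B*C$.

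For conjugation by $b'\in B$, I would exploit that $B$ is a \emph{trivial} skew brace, so $b*b'' = 1$ for all $b,b''\in B$; equivalently $\lambda_b$ fixes $B$ pointwise, and also (since $\lambda_b\in\Aut(A,{\cdot\,})$) $\lambda_b$ restricts to the identity on $B$. The idea is to bring a $\lambda$ into play via Lemma \ref{formulas2}: $\lambda_a(b*c) = (a\circ b\circ\overline a)*\lambda_a(c)$. Taking $a=b'\in B$: the element $b'\circ b\circ\overline{b'}$ lies in $B$ (it is a conjugate in $(A,\circ)$ of $b\in B$, and $B$ is a subgroup of $(A,\circ)$), and $\lambda_{b'}$ is an automorphism of $(A,{\cdot\,})$, so the right side is again of the form $(\text{element of }B)*(\text{something})$. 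Relating $\lambda_{b'}(b*c)$ to the honest conjugate $b'(b*c)b'^{-1}$ is exactly where triviality of $B$ should help: because $b'*x = 1$ for $x$ relevant here, $\lambda_{b'}$ acts on the relevant elements as... this is the delicate point. More precisely I want to show $\lambda_{b'}(b*c)$ and $b'(b*c)b'^{-1}$ differ by an element already known to be in $B*C$, or directly that $b'(b*c)b'^{-1}\in B*C$.

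An alternative, possibly cleaner route for the $b'$-case: use Lemma \ref{formulas1} in the form $(b'\circ b)*c = (b'*(b*c))\cdot(b*c)\cdot(b'*c)$. Since $B$ is trivial and $b'*b=1$ (wait — $b'*b=1$ because $b',b\in B$), Lemma \ref{formulas1} also gives $b'\circ b = b'b$ — no, triviality says $b'\circ b = b'\cdot b$, good — so $(b'\circ b)*c = (b'b)*c$, and on the other hand the first identity of Lemma \ref{formulas1} applied to $(b'b)*c$... hmm, that expands in the \emph{second} argument. Instead, I think the right manipulation is: $b'*(b*c)\cdot(b*c) = (b'\circ b)*c\cdot (b'*c)^{-1}$ from the second identity, and then conjugating $b*c$ by $b'$ can be extracted from $b'*(b*c) = \lambda_{b'}(b*c)(b*c)^{-1}$, i.e. $\lambda_{b'}(b*c) = (b'*(b*c))\cdot(b*c)$. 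Combining with Lemma \ref{formulas2} (which expresses $\lambda_{b'}(b*c)$ in terms of a $*$ with first argument $b'\circ b\circ\overline{b'}\in B$ and second argument $\lambda_{b'}(c)$) should let me write $b'(b*c)b'^{-1}$ as a product of elements of the form $b''*c''$ with $b''\in B$, $c''\in A$ — but I still need $c''\in C$ to conclude membership in $B*C$. So I will likely need, in addition, that $\lambda_{b'}(c)\in C$ or at least that these elements land in $C$; if the proof in \cite{factorization} of \cite[Lemma 3.4]{factorization} is any guide, triviality of $B$ alone may force enough structure.

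\textbf{Main obstacle.} The hard part is the conjugation-by-$b'\in B$ case: controlling $b'(b*c)b'^{-1}$ using only that $B$ is a trivial skew brace (no normality, no ideal hypothesis on $B$ or $C$). The subtlety is that $*$ interacts badly with $\cdot$ in the first slot (as the paper's own remark after Proposition \ref{prop:gen} stresses), so passing between $\lambda_{b'}(b*c)$ and the literal conjugate $b'(b*c)b'^{-1}$, and ensuring the resulting $*$-factors have their second argument back in $C$, is where all the work is. I would first nail down precisely which elements $\lambda_{b'}$ and $\lambda_{\overline{b'}}$ fix (using $b'*x=1\iff\lambda_{b'}(x)=x$ and triviality of $B$), then run the Lemma \ref{formulas1}/\ref{formulas2} bookkeeping for the $b'$-case, and finally assemble the general $a=b'c'$ case from the two special cases (conjugating first by $c'$, which is the easy step, then by $b'$).
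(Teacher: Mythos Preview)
Your plan for the $c'$-case is fine, but you are overcomplicating the $b'$-case and heading down a path (via $\lambda_{b'}$ and Lemma~\ref{formulas2}) that genuinely does not close without an extra hypothesis: to get $\lambda_{b'}(c)\in C$ you would need $C$ to be a left ideal in $A$, which is \emph{not} assumed here. The fix is that Corollary~\ref{cor:conj} already handles the $b'$-case just as cleanly as the $c'$-case. With $a=b'$ one gets
\[
b'(b*c)b'^{-1}=(b*b')^{-1}\bigl(b*(b'c)\bigr)=b*(b'c),
\]
since $b*b'=1$ by triviality of $B$. Now use that $A=B\cdot C$ implies $A=C\cdot B$, write $b'c=vu$ with $v\in C$ and $u\in B$, and apply Lemma~\ref{formulas1} to obtain $b*(vu)=b*v\in B*C$ (again because $b*u=1$). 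So no passage through $\lambda_{b'}$ or Lemma~\ref{formulas2} is needed.

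The paper's proof goes one step further and avoids the case split entirely: it applies Corollary~\ref{cor:conj} once to a general $a$, giving $a(b*c)a^{-1}=(b*a)^{-1}\bigl(b*(ac)\bigr)$, and then factors both $a$ and $ac$ in the order $C\cdot B$ (so $a=yx$, $ac=vu$ with $y,v\in C$ and $x,u\in B$). Lemma~\ref{formulas1} and triviality of $B$ then collapse $b*(yx)=b*y$ and $b*(vu)=b*v$, yielding $a(b*c)a^{-1}=(b*y)^{-1}(b*v)\in B*C$ in one line. The moral: factor in the order $CB$ rather than $BC$, so the $B$-part sits on the right where Lemma~\ref{formulas1} can kill it.
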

\begin{proof}Let $a\in A,\, b\in B,\,  c\in C$. Since $A = BC$, we may write 
\[ a = yx\mbox{ and }ac = vu\mbox{ with }x,u\in B\mbox{ and } y,v\in C.\]
Note that $b* x = b*u = 1$ because $B$ is a trivial skew brace. It then follows from Corollary \ref{cor:conj} and Lemma \ref{formulas1} that
\begin{align*}
a(b*c)a^{-1} & = (b*a)^{-1}(b*(ac))\\
 &=(b*(yx))^{-1}(b*(vu))\\
& = (b*y)^{-1}  (b*v),
\end{align*} 
which lies in $B*C$. Thus, indeed $B*C$ is a normal subgroup of $(A,{\cdot \, })$.  
\end{proof}

\begin{prop}\label{lem:invariant}Assume that $A = B\cdot C$. If one of the conditions
\begin{enumerate}[$(a)$]
\item $C$ is a trivial skew brace and is a left ideal in $A$
\item $B$ is a trivial skew brace and is normal subgroup of $(A,\circ)$
 \end{enumerate}
holds, then $B*C$ is a left ideal in $A$.
\end{prop}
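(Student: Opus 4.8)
The plan is to show that $\lambda_a(b*c)\in B*C$ for all $a\in A$, $b\in B$, $c\in C$, since $B*C$ is generated by such elements $b*c$ and we already know from Proposition \ref{lem:normal} (under either hypothesis, $B$ is trivial) that $B*C$ is a normal subgroup of $(A,{\cdot\,})$; together with the invariance under all $\lambda_a$ this gives precisely that $B*C$ is a left ideal in $A$. The main tool will be Lemma \ref{formulas2}, which rewrites $\lambda_a(b*c) = (a\circ b\circ\overline{a})*\lambda_a(c)$.

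First, under condition $(b)$: since $B$ is normal in $(A,\circ)$, the element $b' := a\circ b\circ\overline{a}$ again lies in $B$. Since $A = B\cdot C = B\circ C$ — the latter by Lemma \ref{lem:product}(a), as $B$ normal in $(A,\circ)$ makes $B$ a left ideal in... no: here I should instead invoke that $A=B\cdot C$ together with $B$ normal in $(A,\circ)$; I can write $\lambda_a(c) = b''\cdot c'$ or $\lambda_a(c)=c'\cdot b''$ with $b''\in B$, $c'\in C$ using $A=B\cdot C$ directly. Then, expanding $b'*\lambda_a(c)$ with Lemma \ref{formulas1} and using that $b'*b''=1$ because $B$ is trivial, the terms collapse to an expression of the form $(b'*\text{something in }C)^{\pm 1}$ times conjugates, landing in $B*C$ after using normality of $B*C$ in $(A,{\cdot\,})$. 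So the key identities to push through are Lemma \ref{formulas1} (to split the second argument of $*$ along the factorization $\lambda_a(c)\in B\cdot C$) and Lemma \ref{formulas2} (to conjugate the first argument into $B$).

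Under condition $(a)$: now $C$ is trivial and a left ideal in $A$. Being a left ideal means $\lambda_a(c)\in C$ for all $a\in A$, $c\in C$, so in $\lambda_a(b*c)=(a\circ b\circ\overline{a})*\lambda_a(c)$ the second argument $\lambda_a(c)$ already lies in $C$. It remains to handle the first argument $a\circ b\circ\overline{a}$, which need not lie in $B$. Here I would write $a\circ b\circ\overline{a} = b_1\cdot c_1$ (or $\circ$-factor it) using $A=B\cdot C$, then expand $(b_1 c_1)*\lambda_a(c)$ — but $*$ is not well-behaved in the first argument over $\cdot$, so I would first pass through $\circ$: write $b_1 c_1 = b_1\circ a_1$ with $a_1=\lambda_{\overline{b_1}}(c_1)$, apply the second identity of Lemma \ref{formulas1}, and use that $b_1*\lambda_a(c)$-type terms and $a_1*\lambda_a(c)$-type terms each lie in $B*C$ — the former directly, and for the latter observe $a_1 = \lambda_{\overline{b_1}}(c_1)\in C$ since $C$ is a left ideal, so $a_1*\lambda_a(c)\in C*C = 1$ because $C$ is trivial. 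Thus $\lambda_a(b*c)\in B*C$.

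The main obstacle is the asymmetry of $*$ in its first argument with respect to $\cdot$: unlike $[a,b]^x$, the quantity $\lambda_a(b*c)$ is not simply $(b')*(c')$ for $b'\in B$, $c'\in C$ unless $B$ is $\circ$-normal, so case $(a)$ forces the detour through Lemma \ref{formulas1}'s second identity and the careful bookkeeping of which partial products land in $B*C$ versus vanish by triviality of $C$. I expect the calculation in case $(a)$ to be the delicate one; case $(b)$ is more direct once Lemma \ref{formulas2} is in hand.
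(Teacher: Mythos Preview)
Your approach is correct and essentially matches the paper's: both cases hinge on Lemma~\ref{formulas2} to rewrite $\lambda_a(b*c)=(a\circ b\circ\overline{a})*\lambda_a(c)$, then factor the troublesome argument via $A=B\cdot C$ (or $A=B\circ C$, which the paper obtains from Lemma~\ref{lem:product}) and collapse using Lemma~\ref{formulas1} together with triviality. Your case~(a) computation, converting $b_1c_1=b_1\circ\lambda_{\overline{b_1}}(c_1)$ with $\lambda_{\overline{b_1}}(c_1)\in C$, is exactly the paper's $x\circ y$ factorization unpacked by hand.

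One correction: your parenthetical ``under either hypothesis, $B$ is trivial'' is false---under~(a) only $C$ is assumed trivial, so Proposition~\ref{lem:normal} is unavailable there. Fortunately you do not need it: to show $B*C$ is a left ideal it suffices that $\lambda_a(b*c)\in B*C$ for each generator $b*c$, because $\lambda_a\in\Aut(A,{\cdot\,})$ and $B*C$ is by definition a subgroup of $(A,{\cdot\,})$; normality plays no role. (In case~(b), if you factor $\lambda_a(c)=c'b''$ with $c'\in C$, $b''\in B$ rather than $b''c'$, the conjugate disappears and you land directly on $b'*c'\in B*C$---this is what the paper does---so normality is not needed there either.) Your expectation that~(a) is the delicate case is also off; the two are symmetric once the right factorization order is chosen.
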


We do not need this proposition to prove our theorems, but we include it here for the sake of completeness.

\begin{proof}[Proof of $(a)$] Let $a\in A,\, b\in B,\,  c\in C$. We may write
\[ a \circ b\circ \overline{a} = x\circ y \mbox{ with } x\in B\mbox{ and }y\in C\]
because $A=B\circ C$ by Lemma \ref{lem:product}. Since $C$ is a trivial skew brace and is a left ideal in $A$, we also know that
\[ \lambda_a(c)\in C\mbox{ and }y * \lambda_a(c) = 1.\]
It then follows from Lemmas \ref{formulas1} and \ref{formulas2} that
\begin{align*}
\lambda_a(b*c) & = (a\circ b \circ \overline{a}) * \lambda_a(c)\\
& = (x\circ y) *  \lambda_a(c)\\
& = x * \lambda_a(c),
\end{align*}
which lies in $B*C$. Thus, indeed $B*C$ is a left ideal in $A$.
\end{proof}
\begin{proof}[Proof of $(b)$] Let $a\in A,\, b\in B,\,  c\in C$. We may write
\[ \lambda_a(c) = yx \mbox{ with } x\in B\mbox{ and }y\in C\]
because $A=B\cdot  C$. Since $B$ is a trivial skew brace and is a normal subgroup \par\noindent of $(A,\circ)$, we also know that
\[ a\circ b\circ \overline{a}\in B\mbox{ and }(a\circ b\circ \overline{a}) * x =1.\]
It then follows from Lemmas \ref{formulas1} and \ref{formulas2} that
\begin{align*}
\lambda_a(b*c) & = (a\circ b \circ \overline{a}) * \lambda_a(c)\\
& = (a\circ b \circ \overline{a}) * (yx)\\
& = (a\circ b \circ \overline{a})* y,
\end{align*}
which lies in $B*C$. Thus, indeed $B*C$ is a left ideal in $A$.
\end{proof}

Th next proposition may be viewed as a generalization of (\ref{commutator}).

\begin{prop}\label{prop:A'}Assume that $A=B\cdot C$ and $A=B\circ C$. If both $B$ and $C$ are trivial skew braces, then $A' = (B*C)\cdot (C*B)$.
\end{prop}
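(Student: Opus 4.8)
\textbf{Proof proposal for Proposition \ref{prop:A'}.}

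The plan is to establish the two inclusions separately. For the inclusion $(B*C)\cdot(C*B) \subseteq A'$, note that $B*C$ and $C*B$ are by definition subgroups of $(A,{\cdot\,})$ generated by elements of the form $b*c$ and $c*b$, each of which lies in $A' = A*A$; since $A'$ is a subgroup of $(A,{\cdot\,})$, it contains the subgroup generated by all these elements, which is exactly $(B*C)\cdot(C*B)$ (or rather the subgroup it generates — but see below). The more substantial direction is $A' \subseteq (B*C)\cdot(C*B)$.

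For this, first I would show that $(B*C)\cdot(C*B)$ is actually a subgroup of $(A,{\cdot\,})$, not merely a subset. Here Proposition \ref{lem:normal} is the key tool: since $B$ is trivial and $A = B\cdot C$, the set $B*C$ is a normal subgroup of $(A,{\cdot\,})$; by the symmetric statement (applying Proposition \ref{lem:normal} with the roles of $B$ and $C$ swapped, using $A = C\cdot B$, which holds since $A = B\cdot C$, and $C$ trivial), $C*B$ is also a normal subgroup of $(A,{\cdot\,})$. The product of two normal subgroups is a subgroup, so $N := (B*C)\cdot(C*B)$ is a (normal) subgroup of $(A,{\cdot\,})$, and it is contained in $A'$. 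It now suffices to show $a * a' \in N$ for all $a, a' \in A$, and in fact since $A' = A*A$ is generated by such elements and $N$ is a subgroup, it is enough to check that every generator $a*a'$ of $A'$ lies in $N$.

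The heart of the argument is then a direct computation using the factorizations. Write $a = b_1 c_1$ and $a' = b_2 c_2$ with $b_1, b_2 \in B$ and $c_1, c_2 \in C$. The operation $*$ behaves well in the second argument via the first identity of Lemma \ref{formulas1}: $a*(b_2 c_2) = (a*b_2)\cdot b_2 \cdot (a*c_2)\cdot b_2^{-1}$. In the first argument it behaves well only after converting ${\cdot\,}$ to $\circ$, so I would use that $A = B\circ C$ to rewrite $a = b_1 c_1$ in the form $b_1 \circ (\text{something})$; more precisely, $b_1 c_1 = b_1 \circ \lambda_{\overline{b_1}}(c_1)$, and since we also assume $A = B\circ C$ we may alternatively write $a = \beta \circ \gamma$ directly with $\beta \in B$, $\gamma \in C$. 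Then the second identity of Lemma \ref{formulas1}, $(\beta \circ \gamma)*a' = (\beta*(\gamma*a'))\cdot(\gamma*a')\cdot(\beta*a')$, splits $a*a'$ into pieces. Using triviality of $B$ and $C$ to kill the terms $b*b'$ and $c*c'$ (which equal $1$), and using that $B*C$ and $C*B$ are normal subgroups to absorb conjugations, one should be left with a product of terms each lying in $B*C$ or $C*B$, hence in $N$.

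The main obstacle I anticipate is the bookkeeping in this last computation: one must carefully track which of the mixed terms $b*c$ versus $c*b$ appear, ensure the nested expressions like $\gamma * a'$ themselves decompose correctly (this may require a small induction or a preliminary sub-lemma, e.g.\ that $b*a \in B*C$ and $c*a \in C*B$ for all $a \in A$, proved first by expanding $a = b'c'$ via Lemma \ref{formulas1} and triviality), and then use normality of $B*C$ and $C*B$ to push all conjugating factors to one side. Once the auxiliary facts "$b*a \in B*C$ for all $a\in A$, $b\in B$" and "$c*a \in C*B$ for all $a\in A$, $c\in C$" are in hand, the decomposition of a general $a*a'$ via the second Lemma \ref{formulas1} identity should close quickly.
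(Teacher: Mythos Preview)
Your proposal is correct and follows essentially the same strategy as the paper: split a generator $a_1*a_2$ via the identities of Lemma~\ref{formulas1}, kill the pure terms using triviality of $B$ and $C$, and invoke normality of $B*C$ from Proposition~\ref{lem:normal} to absorb conjugations. The paper's execution is slightly cleaner than what you outline, and it sidesteps exactly the bookkeeping obstacle you anticipate. After splitting the second argument as $a_2=b_2c_2$, the paper handles $a_1*b_2$ and $a_1*c_2$ separately by using \emph{two different} $\circ$-factorizations of $a_1$: writing $a_1=c_1\circ b_1$ makes $a_1*b_2=c_1*b_2$ directly (since $b_1*b_2=1$), and writing $a_1=b_3\circ c_3$ makes $a_1*c_2=b_3*c_2$ directly (since $c_3*c_2=1$). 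This yields $a_1*a_2=(c_1*b_2)\cdot b_2(b_3*c_2)b_2^{-1}$ in one line, with no nested term $\beta*(\gamma*a')$ and no need for your preliminary sub-lemmas; only the normality of $B*C$ is used at the end. Your route via the sub-lemmas ``$b*a\in B*C$'' and ``$c*a\in C*B$'' also works, but is a small detour that the double factorization trick avoids.
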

\begin{proof}Recall that $A' = A*A$ is the subgroup of $(A,{\cdot \, })$ generated by $a_1*a_2$ with $a_1,a_2\in A$. Since $A = B\cdot  C$, we may write
\[a_2= b_2c_2\mbox{ with }b_2\in B\mbox{ and }c_2\in C.\]
Using Lemma \ref{formulas1}, we compute that
\begin{align*}
a_1 * a_2 & = a_1 * (b_2c_2)\\
& = (a_1*b_2) \cdot b_2\cdot (a_1*c_2)\cdot b_2^{-1}.
\end{align*}
Since $A = B\circ C$, we may similarly write
\[ a_1 = c_1\circ b_1 = b_3\circ c_3\mbox{ with }b_1,b_3\in B\mbox{ and }c_1,c_3\in C.\]
We have $b_1*b_2=1 = c_3*c_2$ because both $B$ and $C$ are trivial skew braces. Again by Lemma \ref{formulas1}, we get that
\begin{align*}
a_1*b_2 & = (c_1\circ b_1) * b_2 = c_1 * b_2,\\
a_1 * c_2 & = (b_3\circ c_3) *c_2 = b_3 *c_2.
\end{align*}
Thus, we have shown that
\[ a_1 * a_2 =  (c_1 *b_2)\cdot b_2\cdot (b_3 * c_2)\cdot b_2^{-1},\]
which lies in $(C*B)\cdot (B*C)$ because  $B*C$ is a normal subgroup of $(A,{\cdot\,})$ by Proposition \ref{lem:normal}. This proves $\subseteq$, and the inclusion $\supseteq$ is trivial. \end{proof}

We can now prove Theorems \ref{thm:left} and \ref{thm:right}. 

\section{Proof of Theorem \ref{thm:left}}

Let $(A,{\cdot \,},\circ)$ be a skew brace such that $A = B\circ C$ for sub-skew braces $B$ and $C$ satisfying conditions (1) -- (3) stated in the theorem. By Lemma \ref{lem:product}, we know that $A =B\cdot C$ also holds. Now, we wish to prove that $A * A' =1$. By Proposition \ref{prop:A'}, the set
\[ S= \{ b*c:b\in B,\, c\in C\}\cup \{c*b: c\in C,\, b\in B\}\]
generates $A'$ as a subgroup of $(A,{\cdot\,})$. It then follows from Corollary \ref{cor:gen} that it suffices to show, for all $a\in A,\, b\in B,\, c\in C$, that
\[ a * (b*c) = 1 \mbox{ and } a *(c*b) =1.\]
By symmetry, it is enough to show the former.

\vspace{2mm}

Since $A=B\circ C$, we may write
\[ a = y \circ x \mbox{ with }x\in B\mbox{ and }y\in C.\]
Observe that $b*c\in B$ because $B$ is a right ideal in $A$. Then $x * (b*c) = 1$ because $B$ is a trivial skew brace, and it follows from Lemma \ref{formulas1} that
\[ a * (b*c)  = (y\circ x) * (b*c) = y * (b*c).\]
Since $C$ is a normal subgroup of $(A,\circ)$ and is a trivial skew brace, we have
\[\overline{b}\circ y \circ b\in C,\,\ (\overline{b}\circ y \circ b\circ \overline{y}) * c =1,\,\ \lambda_y(c) = c.\]
Thus, from Lemmas \ref{formulas1} and \ref{formulas2}, we deduce that
\begin{align*}
\lambda_y(b*c) & = (y\circ b\circ \overline{y}) * \lambda_y(c)\\
& =(b\circ (\overline{b}\circ y\circ b \circ \overline{y})) * c\\
& = b* c.
 \end{align*}
This means that $y*(b*c)=1$, whence $a*(b*c)=1$, as desired. $\square$

\section{Proof of Theorem \ref{thm:right}}

Let $(A,{\cdot \,},\circ)$ be a skew brace such that $A = B\cdot C$ for sub-skew braces $B$ and $C$ satisfying conditions (1) -- (3) stated in the theorem. By Lemma \ref{lem:product}, we know that $A =B\circ C$ also holds. Now, we wish to show that $A' * A =1$. By Proposition \ref{prop:A'}, the set
\[ S = \{ b*c:b\in B,\, c\in C\}\cup \{c*b: c\in C,\, b\in B\}\]
generates $A'$ as a subgroup of $(A,{\cdot\,})$. It then follows from Corollary \ref{cor:gen} that it suffices to show, for all $a\in A,\, b\in B,\, c\in C$, that
\[ (b*c) * a = 1 \mbox{ and } (c*b) * a =1.\]
By symmetry, it is enough to show the former.

\vspace{2mm}
 
Since $A=B\cdot C$, we may write 
\[ a = xy\mbox{ with }x\in B\mbox{ and }c\in C.\]
Observe that $b*c\in C$ because $C$ is a left ideal in $A$. Then $(b*c) * y=1$ because $C$ is a trivial skew brace, and it  follows from Lemma \ref{formulas1} that
\[ (b*c) * a  =(b*c) *(xy) =  (b*c) * x.\]
Since $C$ is a trivial skew brace and is a left ideal in $A$, we have
\[ \overline{c} = c^{-1},\,  \lambda_b(c)\in C,\, \mbox{ and so } b*c = \lambda_b(c)c^{-1} = \lambda_b(c)\circ \overline{c}.\]
Since $B$ is a trivial skew brace and is a left ideal in $A$, we also have
\[  \overline{b} = b^{-1},\, \lambda_{b*c}(x)\in B,\,  \mbox{ and so }\lambda_{b*c}(x)=\lambda_{\overline{b}}(\lambda_{b*c}(x)) = \lambda_{\overline{b}\circ (b*c)}(x).\]
From the above, we deduce that
\begin{align*}
\lambda_{b*c}(x) & = \lambda_{\overline{b} \circ\lambda_b(c)\circ\overline{c}}(x)\\
& = \lambda_{(\overline{b}c)\circ \overline{c}}(x) \\
& = \lambda_{c \circ ( \overline{c} \circ (b^{-1}c))\circ  \overline{c}}(x).
\end{align*}
Since $C$ is a trivial skew brace, we have
\[ \overline{c} \circ (b^{-1}c) =  c^{-1}\lambda_{\overline{c}}(b^{-1}c) = c^{-1} \lambda_{\overline{c}}(b^{-1}) c.\]
But $B$ is a left ideal in $A$ and is a normal subgroup of $(A,{\cdot \, })$, so then
\[ \lambda_{\overline{c}}(b^{-1}),\overline{c} \circ (b^{-1}c),\lambda_{\overline{c}}(x)\in B,\mbox{ and }\lambda_{\overline{c} \circ (b^{-1}c)}(\lambda_{\overline{c}}(x)) =\lambda_{\overline{c}}(x)\]
because $B$ is a trivial skew brace. We now conclude that
\[ \lambda_{b*c}(x) =\lambda_{c \circ ( \overline{c} \circ (b^{-1}c))\circ  \overline{c}}(x) = \lambda_c( \lambda_{\overline{c}}(x)) = x.\]
This means that $(b*c)*x =1$, whence $(b*c)*a=1$, as desired. $\square$

 \section{Some preliminary calculations}\label{part II}
 
In this section, let $A=(A,{\cdot \,},\circ)$ be a skew brace, and let $A^{\op}=(A,{\cdot^{\op} \,},\circ)$ denote its opposite skew brace. Also let $B$ and $C$ be sub-skew braces of $A$. 

\vspace{2mm}

To simplify calculations, we shall make the following assumption.

\begin{assumption}In what follows, we shall always assume that both $B$ and $C$ are trivial skew braces. Note that then
\[ \begin{cases}
 \overline{b}= b^{-1},\, b*x=1,\, \lambda_b(x) = x,\, \mbox{and }\lambda_b^{\op}(x) = bxb^{-1} & \mbox{for all }b,x\in B,\\
 \overline{c} = c^{-1},\, c*y = 1,\, \lambda_c(y) = y,\, \mbox{and }\lambda_c^{\op}(y) = cyc^{-1} & \mbox{for all }c,y\in C.
 \end{cases}\]
We shall use these facts repeatedly without explicit mention.
\end{assumption}

To prove Theorem \ref{thm:ito}, we shall need to show that
\[ (b*c)*(x*y) = 1\mbox{ and }(b*c)*(y*x)=1\]
for all $b,x\in B$ and $c,y\in C$. Our strategy is to first write
\[ b*c = c^{-1} \circ b_1^{-1} \circ c_2\circ b_2\mbox{ with }b_1,b_2\in B\mbox{ and }c_2\in C,\]
so that for any $a\in A$, we have
\begin{equation}\label{convert} 
(b*c)*a =1 \iff \lambda_{b*c}(a) = a\iff \lambda_{b_1\circ c}(a) = \lambda_{c_2\circ b_2}(a).\end{equation}
We shall then investigate conditions under which such an equality holds for $a = x*y$ or $a=y*x$ with $x\in B$ and $y\in C$.

\begin{lem}\label{lem:b*c}For any $b\in B$ and $c\in C$, we have
\[ b*c = c^{-1}\circ b_1^{-1}\circ c_2\circ b_2, \]
where we define
\[  b_1 = \lambda_c^{\op}(b)^{-1},\,\ c_2 = \lambda^{\op}_{b_1\circ c\circ b}(c),\,\ b_2 = \lambda_{\overline{c_2}\circ b_1}^{\op}(b_1).\]
In particular, if $B$ and $C$ are left ideals in $A^{\op}$, then
\begin{equation}\label{def}
 b_1 = \lambda_c^{\op}(b)^{-1},\,\ c_2 = \lambda^{\op}_{b_1}(c\lambda_b^{\op}(c)c^{-1}),\,\ b_2 = \lambda_{\overline{c_2}}^{\op}(b_1)\end{equation}
with $b_1,b_2\in B$ and $c_2\in C$.
\end{lem}
\begin{proof} Using the identity $xy = y\circ \lambda_{\overline{y}}^{\op}(x)$ repeatedly, we compute that
\begin{align*}
    b*c & = b^{-1}\lambda_{b}^{\op}(c) bc^{-1}\\
    & = c^{-1} \circ \lambda_c^{\op}(b^{-1}\lambda_{b}^{\op}(c)b)\\
    & = c^{-1}\circ (b_1 \lambda_{c\circ b}^{\op}(c) b_1^{-1})\\
    & = c^{-1}\circ b_1^{-1}\circ \lambda_{b_1}^{\op}(b_1 \lambda_{c\circ b}^{\op}(c)) \\
    & = c^{-1} \circ b_1^{-1} \circ (\lambda_{b_1}^{\op}(b_1)c_2)\\
    & = c^{-1}\circ b_1^{-1} \circ c_2\circ \lambda^{\op}_{\overline{c_2}}( \lambda_{b_1}^{\op}(b_1))\\
    & = c^{-1} \circ b_1^{-1} \circ c_2\circ b_2,
    \end{align*}
as claimed. When $B$ and $C$ are left ideals in $A^{\op}$, it is clear that $b_1,b_2\in B$ and $c_2\in C$. We also have $\lambda_b^{\op}(c)\in C$. It follows that
\begin{align*}
 \lambda_{c\circ b}^{\op}(c) & = \lambda_c^{\op}(\lambda_b^{\op}(c)) = c \lambda_b^{\op}(c)c^{-1}\\
 \lambda_{ b_1}^{\op}(b_1) & = b_1b_1b_1^{-1} = b_1
 \end{align*}
because $B$ and $C$ are trivial skew braces. This yields (\ref{def}).
\end{proof}

\begin{lem}\label{lem:coset}If $C$ is a left ideal in $A^{\op}$, then 
\[ c\circ a \circ \overline{c} \in (\lambda_c^{\op}(a)c  )\circ C\mbox{ and }a\circ C =Ca\]
for all $a\in A$ and $c\in C$.
\end{lem}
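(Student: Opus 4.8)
The plan is to reduce both assertions to the two identities relating $\cdot$ and $\circ$ through the opposite lambda map recorded at the start of Section~\ref{part II}, namely $x\circ y=\lambda^{\op}_x(y)\cdot x$ and $x\cdot y=y\circ\lambda^{\op}_{\overline{y}}(x)$. No serious calculation is needed; the whole point is to apply the right conversion identity on each side.

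For the first containment I would simply write $c\circ a=\lambda^{\op}_c(a)\cdot c$ using $x\circ y=\lambda^{\op}_x(y)x$, so that
\[ c\circ a\circ\overline{c}=(\lambda^{\op}_c(a)\cdot c)\circ\overline{c}. \]
Since $C$ is in particular a subgroup of $(A,\circ)$, we have $\overline{c}\in C$, and hence $c\circ a\circ\overline{c}$ lies in the right $\circ$-coset $(\lambda^{\op}_c(a)\cdot c)\circ C$. I should note that this part does not even use the left ideal hypothesis; it is stated here only because it is meant to be combined with the coset identity below.

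For $a\circ C=Ca$ I would prove the two inclusions separately. For $a\circ C\subseteq Ca$: given $c\in C$, write $a\circ c=\lambda^{\op}_a(c)\cdot a$ using $x\circ y=\lambda^{\op}_x(y)x$; the hypothesis that $C$ is a left ideal in $A^{\op}$ gives $\lambda^{\op}_a(c)\in C$, so $a\circ c\in Ca$. For $Ca\subseteq a\circ C$: given $c\in C$, write $c\cdot a=a\circ\lambda^{\op}_{\overline{a}}(c)$ using $x\cdot y=y\circ\lambda^{\op}_{\overline{y}}(x)$; again $\lambda^{\op}_{\overline{a}}(c)\in C$ because $C$ is a left ideal in $A^{\op}$, so $c\cdot a\in a\circ C$. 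Combining the two inclusions yields the claimed equality.

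The only point requiring a little care, and the step I would double-check, is precisely which of the two conversion identities to apply on each side: the left ideal hypothesis only controls $\lambda^{\op}_x(c)$ for $c\in C$ and $x\in A$ arbitrary, not $\lambda^{\op}_c(x)$, so an ill-chosen conversion would leave behind a factor whose membership in $C$ is not guaranteed. Using $x\circ y=\lambda^{\op}_x(y)x$ for $a\circ C\subseteq Ca$ and $x\cdot y=y\circ\lambda^{\op}_{\overline{y}}(x)$ for the reverse inclusion keeps the relevant element in the ``controlled'' slot of $\lambda^{\op}$ in both cases, and that is really the whole content of the lemma.
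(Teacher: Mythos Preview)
Your proof is correct. For $a\circ C=Ca$ your argument is identical to the paper's. For the first containment your route is genuinely shorter: the paper expands
\[
c\circ a\circ\overline{c}=\lambda_c^{\op}(a\circ\overline{c})\,c=\lambda_{c\circ a}^{\op}(\overline{c})\cdot\lambda_c^{\op}(a)c=(\lambda_c^{\op}(a)c)\circ\lambda^{\op}_{\overline{a_0}\circ c\circ a}(\overline{c})
\]
with $a_0=\lambda_c^{\op}(a)c$, and then uses the left ideal hypothesis to place $\lambda^{\op}_{\overline{a_0}\circ c\circ a}(\overline{c})$ in $C$. You instead note that $c\circ a=\lambda_c^{\op}(a)c$ already, so $c\circ a\circ\overline{c}=(\lambda_c^{\op}(a)c)\circ\overline{c}$ with $\overline{c}\in C$. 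Since the lemma only asserts coset membership and is only used that way in Lemma~\ref{lem:compare}, your shortcut loses nothing. One small remark: your claim that this part ``does not even use the left ideal hypothesis'' is true only because Section~\ref{part II} assumes $C$ is a sub-skew brace, giving $\overline{c}\in C$ for free; from the left ideal hypothesis alone one would still need an argument to get closure under $\circ$-inverses.
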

\begin{proof}Put $a_0 = \lambda_c^{\op}(a)c$ for short. We have
\begin{align*}
c\circ a\circ \overline{c} & = \lambda_c^{\op}(a\circ \overline{c}) c\\
&= \lambda_c^{\op}( \lambda_a^{\op}(\overline{c})a)c\\
& = \lambda_{c\circ a }^{\op}(\overline{c})\cdot \lambda_c^{\op}(a)c\\
& = (\lambda_c^{\op}(a)c)\circ \lambda_{\overline{a_0}\circ c\circ a}^{\op}(\overline{c}),
\end{align*}
and $\lambda_{\overline{a_0}\circ c\circ a}^{\op}(\overline{c})\in C$ because $C$ is a left ideal in $A^{\op}$. This gives the first claim. For any $y\in C$, observe that
\[ a \circ y = \lambda_a^{\op}(y)a \mbox{ and }ya = a\circ \lambda_{\overline{a}}^{\op}(y),\]
where $\lambda_a^{\op}(y), \lambda_{\overline{a}}^{\op}(y)\in C$ again because $C$ is a left ideal in $A^{\op}$. This implies that $a\circ C = Ca$, as desired.
\end{proof}

\begin{lem}\label{lem:compare}For any $b,x\in B$ and $c,y\in C$, we have
   \begin{align*}
        \lambda_{b\circ c}(x*y)& = \lambda_{b}^{\op}(x_1)* y_2,\\
        \lambda_{c\circ b}(x*y)& = x_3  * y_2,
    \end{align*}
where $x_1,x_3\in B$ and $y_2\in C$ are any elements (if exist) such that
       \begin{align*}
        c\circ x \circ \overline{c} \in x_1\circ C,\,\
        \lambda_b(y) \in y_2B,\,\ 
        c\circ \lambda_{b}^{\op}(x)\circ\overline{c} \in x_3\circ C.
    \end{align*}
In particular, if $C$ is a left ideal in $A^{\op}$, then $x_1,x_3\in B$ may be taken to be any elements (if exist) such that
\[ \lambda_c^{\op}(x)c \in Cx_1\mbox{ and }\lambda_c^{\op}(\lambda_b^{\op}(x)) c \in Cx_3\]
in view of Lemma \ref{lem:coset}.
\end{lem}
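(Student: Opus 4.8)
The plan is to prove Lemma~\ref{lem:compare} by two applications of the conjugation formula in Lemma~\ref{formulas2}, pushing $\lambda$ through the $*$-operation and then rewriting the resulting $\circ$-conjugates in the displayed forms using Lemma~\ref{formulas1}. For the first identity, I would start from $\lambda_{b\circ c}(x*y) = \lambda_{b}(\lambda_c(x*y))$ and apply Lemma~\ref{formulas2} to get $\lambda_c(x*y) = (c\circ x\circ\overline c)*\lambda_c(y)$. Since $C$ is a trivial skew brace and $y\in C$, we have $\lambda_c(y)=y$; and by hypothesis $c\circ x\circ\overline c \in x_1\circ C$, say $c\circ x\circ\overline c = x_1\circ c'$ with $c'\in C$, so that by the second identity in Lemma~\ref{formulas1} (using $c'*y=1$ since both lie in the trivial skew brace $C$) we get $(c\circ x\circ\overline c)*y = (x_1\circ c')*y = x_1*y$. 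Thus $\lambda_c(x*y) = x_1*y$. Then apply $\lambda_b$ and Lemma~\ref{formulas2} once more: $\lambda_b(x_1*y) = (b\circ x_1\circ\overline b)*\lambda_b(y)$. Now $b\circ x_1\circ\overline b = \lambda_b^{\op}(x_1)$ since the action of $b$ by $\circ$-conjugation, read through the opposite lambda map, is exactly $\lambda_b^{\op}$ on $B$ (indeed $b\circ x_1\circ\overline b = \lambda_b^{\op}(x_1)\,b\,b^{-1} = \lambda_b^{\op}(x_1)$, using $\overline b = b^{-1}$ from the standing Assumption); and $\lambda_b(y)\in y_2 B$, say $\lambda_b(y) = y_2 b''$ with $b''\in B$, so the first identity in Lemma~\ref{formulas1} gives $(b\circ x_1\circ\overline b)*(y_2 b'') = \cdots$, and here I need to be careful: Lemma~\ref{formulas1} gives $u*(y_2 b'') = (u*y_2)\cdot y_2\cdot(u*b'')\cdot y_2^{-1}$, and $u*b'' = 1$ when $u = \lambda_b^{\op}(x_1)\in B$ and $b''\in B$ (trivial skew brace), so this collapses to $\lambda_b^{\op}(x_1)*y_2$. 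That yields the first formula.

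For the second identity, the argument is parallel but with the order of $b$ and $c$ swapped. Write $\lambda_{c\circ b}(x*y) = \lambda_c(\lambda_b(x*y))$, apply Lemma~\ref{formulas2} to the inner term: $\lambda_b(x*y) = (b\circ x\circ\overline b)*\lambda_b(y) = \lambda_b^{\op}(x)*\lambda_b(y)$, again using $b\circ x\circ\overline b = \lambda_b^{\op}(x)$. Since $\lambda_b^{\op}(x)\in B$ and $\lambda_b(y)\in y_2 B$, the same collapse via Lemma~\ref{formulas1} as above gives $\lambda_b(x*y) = \lambda_b^{\op}(x)*y_2$. Now apply $\lambda_c$ and Lemma~\ref{formulas2}: $\lambda_c(\lambda_b^{\op}(x)*y_2) = (c\circ \lambda_b^{\op}(x)\circ\overline c)*\lambda_c(y_2)$. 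Here $\lambda_c(y_2) = y_2$ since $y_2\in C$ trivial, and by hypothesis $c\circ\lambda_b^{\op}(x)\circ\overline c\in x_3\circ C$, so exactly as in the first part (write it as $x_3\circ c'''$ with $c'''\in C$, use $c'''*y_2 = 1$) we get $(c\circ\lambda_b^{\op}(x)\circ\overline c)*y_2 = x_3*y_2$. That is the second formula.

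Finally, for the "in particular" clause, I would invoke Lemma~\ref{lem:coset}: when $C$ is a left ideal in $A^{\op}$, it gives $c\circ a\circ\overline c\in(\lambda_c^{\op}(a)c)\circ C$ for every $a$, and $a\circ C = Ca$. Applying this with $a=x$ shows $c\circ x\circ\overline c\in(\lambda_c^{\op}(x)c)\circ C$, so the condition $c\circ x\circ\overline c\in x_1\circ C$ is equivalent to $x_1\circ C = (\lambda_c^{\op}(x)c)\circ C$, i.e.\ to $\lambda_c^{\op}(x)c\in x_1\circ C = Cx_1$ (using $x_1\circ C = Cx_1$ from Lemma~\ref{lem:coset} applied to $a = x_1$, which is legitimate since $x_1$ is an element of $A$). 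Similarly with $a=\lambda_b^{\op}(x)$ one gets the condition $\lambda_c^{\op}(\lambda_b^{\op}(x))c\in Cx_3$; I would match this to the "$=$" in the statement (which presumably should read $\in$, or which is meant as equality of cosets). So the reduction to the stated coset conditions is just bookkeeping with left versus right cosets of $C$ in $(A,\circ)$.

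I expect the main obstacle to be purely bookkeeping rather than conceptual: keeping straight which $\circ$-conjugation collapses to a $\lambda$ or a $\lambda^{\op}$ (this relies crucially on the standing Assumption that $B$ and $C$ are trivial, so that $\overline b = b^{-1}$ and the two lambda maps on $B$, $C$ simplify), and making sure each application of Lemma~\ref{formulas1} really does collapse because the spurious factor lands inside the trivial skew brace $B$ or $C$ and hence $*$-commutes trivially. The only genuinely delicate point is the very last step, reconciling the "$\in x_1\circ C$" / "$\lambda_c^{\op}(x)c\in Cx_1$" formulations with the equality sign "$=Cx_3$" in the printed statement; I would treat it as an equality of $C$-cosets, which is what Lemma~\ref{lem:coset} delivers.
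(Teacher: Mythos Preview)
Your proposal is correct and follows essentially the same route as the paper's proof: two applications of Lemma~\ref{formulas2} (inner then outer) together with the collapse formulas from Lemma~\ref{formulas1}, using throughout that $B$ and $C$ are trivial so that $\circ$-conjugation within $B$ coincides with $\lambda_b^{\op}$ and $\lambda_c(y)=y$ for $y\in C$. Your handling of the ``in particular'' clause via Lemma~\ref{lem:coset} is exactly what the paper intends, and you are right that the printed ``$=$'' should be read as a coset equality (equivalently, as ``$\in$'').
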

\begin{proof}By the hypothesis, there exist $x_2\in B$ and $y_1,y_3\in C$ with
\[ c\circ x \circ \overline{c} = x_1\circ y_1,\, \lambda_b(y) = y_2x_2,\, 
c\circ \lambda_b^{\op}(x)\circ\overline{c}= x_3\circ y_3.\]
Notice that $\lambda_{b}^{\op}(x_1)=b\circ x_1\circ \overline{b}$ and $ \lambda_b^{\op}(x)=b\circ x \circ \overline{b}$ because $B$ is a trivial skew brace. Certainly they are both elements of $B$.
Using Lemmas \ref{formulas1} and \ref{formulas2}, we then compute that
\begin{align*}
    \lambda_{b\circ c}(x*y) & = 
    \lambda_b( (c\circ x \circ \overline{c}) * \lambda_c(y)) \\
    & = \lambda_{b}( (x_1\circ y_1) * y)&(\mbox{since $C$ is trivial})  \\
    & = \lambda_{b} (x_1 * y) &(\mbox{since }y_1*y=1)\\
    & = (b\circ x_1\circ \overline{b}) *\lambda_b(y)\\
    & = \lambda_b^{\op}(x_1) * (y_2x_2)  \\
    & = \lambda_b^{\op}(x_1) * y_2 &(\mbox{since }\lambda_b^{\op}(x_1)*x_2=1),
\end{align*}
and similarly that
\begin{align*}
    \lambda_{c\circ b}(x*y) & = \lambda_c((b\circ x \circ \overline{b}) * \lambda_b(y))\\
    & = \lambda_{c}(\lambda_b^{\op}(x) * (y_2x_2))\\
    & = \lambda_{c}(\lambda_b^{\op}(x) * y_2)&(\mbox{since }\lambda_b^{\op}(x)*x_2=1)\\
    &= (c\circ \lambda_b^{\op}(x)\circ\overline{c}) * \lambda_c(y_2)\\
    & = (x_3\circ y_3) * y_2 &(\mbox{since $C$ is trivial}) \\
    & = x_3 * y_2 & (\mbox{since $y_3*y_2=1$}).
\end{align*}
This proves the claims.
\end{proof}

The next lemma  shall also be useful.

\begin{lem}\label{lem:ideal}If $C$ is a left ideal in $A^{\op}$ and $B$ is a right ideal in $A^{\op}$, then 
\[\lambda_b^{\op}(c)c^{-1} \in B\cap C\]
 for all $b\in B$ and $c\in C$.
\end{lem}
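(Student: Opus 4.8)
The plan is to establish the two memberships $\lambda_b^{\op}(c)c^{-1}\in B$ and $\lambda_b^{\op}(c)c^{-1}\in C$ by routing everything through the single auxiliary element $\beta = c^{-1}\lambda_b^{\op}(c)$, which I will show lies in the intersection $B\cap C$. The key observation is that $\lambda_b^{\op}(c)c^{-1} = c\beta c^{-1}$ is a $\cdot$-conjugate of $\beta$ by $c$, so once $\beta\in C$ is known the triviality of $C$ converts this conjugate into $\lambda_c^{\op}(\beta)$, and once $\beta\in B$ is known the left-ideal property of $B$ in $A^{\op}$ forces $\lambda_c^{\op}(\beta)\in B$.

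In detail, I would proceed as follows. First, since $B$ is a right ideal in $A^{\op}$, the characterization $a^{-1}\lambda_x^{\op}(a)\in B$ (applied with $x=b\in B$ and $a=c\in A$) gives $\beta = c^{-1}\lambda_b^{\op}(c)\in B$. Second, since $C$ is a left ideal in $A^{\op}$, the characterization $\lambda_a^{\op}(x)\in C$ (applied with $x=c\in C$ and $a=b\in A$) gives $\lambda_b^{\op}(c)\in C$, whence $\beta = c^{-1}\lambda_b^{\op}(c)\in C$ because $C$ is a subgroup of $(A,{\cdot\,})$. Thus $\beta\in B\cap C$. Finally, I rewrite $\lambda_b^{\op}(c)c^{-1} = c\beta c^{-1}$; since $\beta\in C$ and $C$ is a trivial skew brace, $c\beta c^{-1} = \lambda_c^{\op}(\beta)$, and since $\beta\in B$ and $B$ is a left ideal in $A^{\op}$, this element $\lambda_c^{\op}(\beta)$ lies in $B$. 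Combined with the obvious $c\beta c^{-1}\in C$, this yields $\lambda_b^{\op}(c)c^{-1}\in B\cap C$.

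There is no serious obstacle here; the only thing worth noticing is that one genuinely needs \emph{both} $\beta\in B$ and $\beta\in C$: the membership $\beta\in C$ is what allows the $\cdot$-conjugation by $c$ to be rewritten as the action of $\lambda_c^{\op}$ (using triviality of $C$), while the membership $\beta\in B$ is what makes that action land back inside $B$ (using that $B$ is a left ideal in $A^{\op}$). This is a slicker route than first arguing that the left ideal $B$ is a normal subgroup of $(A,{\cdot\,})$ and conjugating directly, and it consumes each of the three ideal hypotheses exactly once.
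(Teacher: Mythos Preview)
Your argument is correct and is essentially identical to the paper's own proof: the paper also sets $c^{-1}\lambda_b^{\op}(c)$ as the pivot element, uses the right-ideal property of $B$ in $A^{\op}$ to place it in $B$, the left-ideal property of $C$ in $A^{\op}$ to place it in $C$, the triviality of $C$ to rewrite $c(c^{-1}\lambda_b^{\op}(c))c^{-1}=\lambda_c^{\op}(c^{-1}\lambda_b^{\op}(c))$, and finally the left-ideal property of $B$ in $A^{\op}$ to conclude. The only cosmetic difference is that you name this element $\beta$ and front-load the verification $\beta\in C$ before invoking triviality, whereas the paper leaves that step implicit from its first line.
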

\begin{proof}That $\lambda_b^{\op}(c)c^{-1}\in C$ holds because $C$ is a left ideal in $A^{\op}$. Since 
\begin{align*}
\lambda_b^{\op}(c)c^{-1}= (c\lambda_b^{\op}(c^{-1}))^{-1}
= ((c^{-1})^{-1} \lambda_b^{\op}(c^{-1}))^{-1}
\end{align*}
and $B$ is a right ideal in $A^{\op}$, we see that this element also lies in $B$. Thus, we indeed have $ \lambda_b^{\op}(c)c^{-1}\in B\cap C$.
\end{proof}

We now move on to the proof of Theorem \ref{thm:ito}.

\section{Proof of Theorem \ref{thm:ito}}

Let $(A,{\cdot \,},\circ)$ be a skew brace such that $A = B\cdot C$ and $A=B\circ C$ (recall Remark \ref{remark intro}) for sub-skew braces $B$ and $C$ satisfying conditions (1) and (2) stated in the theorem. Now, we wish to show that $A'$ is a trivial skew brace, or equivalently $A' * A' =1$. By Proposition \ref{prop:A'}, the set
\[ S= \{ b*c:b\in B,\, c\in C\}\cup \{c*b: c\in C,\, b\in B\}\]
generates $A'$ as a subgroup of $(A,{\cdot\,})$. It then follows from Corollary \ref{cor:gen} that it suffices to show, for all $b,x\in B$ and $c,y\in C$, that
\[ \begin{cases}
(b*c) * (x*y) = 1\\
(b*c) * (y*x) = 1
\end{cases}
\mbox{ and }\,\ 
\begin{cases}
(c*b) * (y*x) = 1,\\
(c*b) * (x*y) = 1.
\end{cases}\]
By symmetry, it is enough to show the former set of equalities.

\vspace{2mm}

In what follows, let $b,x\in B$ and $c,y\in C$. By Lemma \ref{lem:b*c}, we may write
\[ b*c = c^{-1}\circ b_1^{-1}\circ c_2\circ b_2,\]
where $b_1,b_2\in B$ and $c_2\in C$ are given as in (\ref{def}). Then
\begin{equation}\label{eqn}
\begin{cases}
(b*c) * (x*y) = 1 \iff  \lambda_{b_1\circ c}(x*y) = \lambda_{c_2\circ b_2}(x*y)\\
(b*c)*(y*x) =1  \iff  \lambda_{b_1\circ c}(y*x) = \lambda_{c_2\circ b_2}(y*x)
\end{cases}
\end{equation}
as noted in (\ref{convert}). 
To simplify notation, let us also put 
\[ d = \lambda_b^{\op}(c)c^{-1},\,\
e = \lambda_c^{\op}(b)^{-1} b,\,\
f = c_2c^{-1},\,\
g = b_2b.\]
We may then rewrite (\ref{def}) as 
\[ b_1 = eb^{-1},\,\ c_2 = \lambda^{\op}_{b_1}(cd),\,\ b_2 =  \lambda_{\overline{c_2}}^{\op}(b_1) =\lambda_{\overline{c_2}}^{\op}(eb^{-1}).\]
It shall be very important in the forthcoming calculations that $d,e,f,g$ are elements of both $B$ and $C$.

\begin{lem}We have $d,e,f,g\in B\cap C$.
\end{lem}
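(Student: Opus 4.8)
The plan is to verify that each of the four elements $d,e,f,g$ lies in both $B$ and $C$, handling them in an order that lets later verifications reuse earlier ones. Recall the definitions: $d=\lambda_b^{\op}(c)c^{-1}$, $e=\lambda_c^{\op}(b)^{-1}b$, $f=c_2c^{-1}$ with $c_2=\lambda_{b_1}^{\op}(cd)$, and $g=b_2b$ with $b_2=\lambda_{\overline{c_2}}^{\op}(b_1)$ and $b_1=eb^{-1}$. Throughout I will use the Assumption that $B$ and $C$ are trivial skew braces, together with the characterizations of left/right ideals in $A^{\op}$ recorded in Section 2: namely $I$ is a left ideal in $A^{\op}$ iff $\lambda_a^{\op}(x)\in I$ for all $x\in I$, and a right ideal in $A^{\op}$ iff $a^{-1}\lambda_x^{\op}(a)\in I$ for all $x\in I$.

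First I would dispatch $d$: we have $d=\lambda_b^{\op}(c)c^{-1}$, which is exactly the quantity treated in Lemma \ref{lem:ideal}, so $d\in B\cap C$ immediately from that lemma (using that $C$ is a left ideal and $B$ is both a left and right ideal in $A^{\op}$). Next, $e=\lambda_c^{\op}(b)^{-1}b$: since $B$ is a left ideal in $A^{\op}$, $\lambda_c^{\op}(b)\in B$, hence $e\in B$; to see $e\in C$, I rewrite $e = \lambda_c^{\op}(b)^{-1}b = \bigl(b^{-1}\lambda_c^{\op}(b)\bigr)^{-1}$ up to conjugation — more precisely, using that $C$ is a trivial skew brace one can express $e$ (or a conjugate of $e$ by an element of $C$, which does not affect membership in the normal subgroup $C$) in the form $\lambda_c^{\op}$ applied to $b^{-1}\lambda_c^{\op}(b)$, and $b^{-1}\lambda_c^{\op}(b)\in C$ because $C$ is a right ideal in $A^{\op}$. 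This is the same two-step trick as in the proof of Lemma \ref{lem:ideal}, and I expect $e$ to go through in the mirror-image way that $d$ did.

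Then $f=c_2c^{-1}=\lambda_{b_1}^{\op}(cd)c^{-1}$: since $b_1=eb^{-1}\in B$ (both $e$ and $b$ are in $B$) and $cd\in C$ (as $d\in C$ and $C$ is a group under $\cdot$), and since $B,C$ are left ideals in $A^{\op}$, we get $\lambda_{b_1}^{\op}(cd)\in C$, so $f\in C$. For $f\in B$: because $C$ is a trivial skew brace, $\lambda_{b_1}^{\op}(cd) = b_1(cd)b_1^{-1}$ exactly when $cd\in C$ and $b_1\in B$... actually I need the correct formula; more carefully, $c_2 = \lambda_{b_1}^{\op}(cd)$ and since $d = \lambda_b^{\op}(c)c^{-1}$ we have $cd = c\lambda_b^{\op}(c)c^{-1}$, so one can unwind $c_2$ and compare it against $c$ to isolate $f=c_2c^{-1}$ as $\lambda_{b_1}^{\op}$ of something in $B$ modulo elements already known to lie in $B\cap C$; here I would lean on $d\in B$ and on $B$ being a left ideal in $A^{\op}$. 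Finally $g=b_2b=\lambda_{\overline{c_2}}^{\op}(b_1)b$: since $b_1\in B$ and $B$ is a left ideal in $A^{\op}$, $b_2=\lambda_{\overline{c_2}}^{\op}(b_1)\in B$, so $g\in B$; and for $g\in C$, I would write $\overline{c_2}=\overline{c_2c^{-1}\cdot c} $ and use that $c_2c^{-1}=f\in C$ together with $C$ being a left ideal in $A^{\op}$ to reduce $\lambda_{\overline{c_2}}^{\op}(b_1)$ modulo $C$ to $\lambda_{\overline{c}}^{\op}(b_1)$, then relate $\lambda_{\overline{c}}^{\op}(b_1)b$ to $e=\lambda_c^{\op}(b)^{-1}b$ (up to the already-established membership $e\in C$) using $b_1=eb^{-1}$ and the homomorphism property of $\lambda^{\op}$.

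The main obstacle I anticipate is $g$ (and to a lesser extent $f$): these involve $\lambda^{\op}$ evaluated at $\overline{c_2}$, and $c_2$ itself is defined through a nested $\lambda^{\op}$, so to show $g\in C$ I genuinely need to telescope several applications of the identity $\lambda_{u\circ v}^{\op}=\lambda_u^{\op}\lambda_v^{\op}$ and repeatedly "absorb" a previously-proven $B\cap C$ element into the ambient normal subgroup. The order $d,e,f,g$ is chosen precisely so that each step may assume the earlier memberships; the bookkeeping of which conjugations are harmless (because $B$ and $C$ are normal in the relevant group and hence absorb conjugates) is where care is needed, but no step should require more than the ideal characterizations from Section 2 plus triviality of $B$ and $C$.
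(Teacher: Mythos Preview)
Your overall plan---handle $d,e,f,g$ in that order, using Lemma~\ref{lem:ideal} and its mirror image---is exactly the paper's approach. However, there is one genuine error and two places where you make things much harder than necessary.

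\textbf{The error.} You twice invoke ``the normal subgroup $C$'' (and implicitly do the same for $B$). Under the hypotheses of Theorem~\ref{thm:ito}, neither $B$ nor $C$ is assumed to be normal in $(A,{\cdot\,})$. Any argument that absorbs a conjugate into $B$ or $C$ by normality is invalid. Fortunately, you never actually need normality once the computations are done correctly.

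\textbf{The element $e$.} You write $e=(b^{-1}\lambda_c^{\op}(b))^{-1}$ ``up to conjugation''---but this is a literal equality: $(b^{-1}\lambda_c^{\op}(b))^{-1}=\lambda_c^{\op}(b)^{-1}b=e$. Since $b^{-1}\lambda_c^{\op}(b)\in C$ by the right-ideal property of $C$ in $A^{\op}$, you get $e\in C$ immediately. No conjugation, no normality.

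\textbf{The element $g$.} Your proposed route (split $\overline{c_2}$ using $f$, telescope $\lambda^{\op}$, compare with $\lambda_{\overline c}^{\op}(b_1)$, \dots) is far more involved than what is needed. The paper simply observes that $b_1b=eb^{-1}b=e$, so
\[
g \;=\; b_2b \;=\; \lambda_{\overline{c_2}}^{\op}(b_1)\,b_1^{-1}\cdot e.
\]
The first factor is exactly the quantity in Lemma~\ref{lem:ideal} with the roles of $B$ and $C$ swapped (which is legitimate, since the hypotheses on $B$ and $C$ are symmetric), hence lies in $B\cap C$; and $e\in C$ is already known. This one-line rewrite replaces your entire last paragraph.

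\textbf{The element $f$.} Your sketch for $f\in B$ is in the right spirit but vague. The clean decomposition is
\[
f=\lambda_{b_1}^{\op}(cd)\,c^{-1}=\lambda_{b_1}^{\op}(c)c^{-1}\cdot c\,\lambda_{b_1}^{\op}(d)\,c^{-1}=\lambda_{b_1}^{\op}(c)c^{-1}\cdot \lambda_{c\circ b_1}^{\op}(d),
\]
using that $c\,x\,c^{-1}=\lambda_c^{\op}(x)$ for $x\in C$ (triviality of $C$). The first factor lies in $B$ by Lemma~\ref{lem:ideal}, the second because $d\in B$ and $B$ is a left ideal in $A^{\op}$.
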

\begin{proof}Note that $d\in B\cap C$ by Lemma \ref{lem:ideal}, while $e = (b^{-1}\lambda_c^{\op}(b))^{-1}\in B\cap C$  because $B$ is a left ideal in $A^{\op}$ and $C$ is a right ideal in $A^{\op}$. It is also clear that $f\in C$ and $g\in B$.

\vspace{2mm}

To show that $f\in B$, first note that since $d\in C$, we have
\[ \lambda_{b_1}^{\op}(d)\in C\mbox{ and so }c\lambda_{b_1}^{\op}(d)c^{-1} = \lambda_{c}^{\op}(\lambda_{b_1}^{\op}(d))\]
because $C$ is a left ideal in $A^{\op}$ and is a trivial skew brace. Then
\begin{align*}
f &= c_2c^{-1}\\
&=\lambda_{b_1}^{\op}(cd)c^{-1} \\
& = \lambda_{b_1}^{\op}(c)c^{-1} \cdot c\lambda_{b_1}^{\op}(d)c^{-1}\\
& = \lambda_{b_1}^{\op}(c)c^{-1} \cdot \lambda^{\op}_{c\circ b_1}(d).
\end{align*}
But the first factor lies in $B$ by Lemma \ref{lem:ideal}. The second factor also lies in $B$ because $d\in B$ and $B$ is a left ideal in $A^{\op}$. Thus, their product lies in $B$.

\vspace{2mm}

To show that $g\in C$, we first rewrite
\begin{align*}
g & = b_2b\\
&=\lambda_{\overline{c_2}}^{\op}(eb^{-1})b\\
&=\lambda_{\overline{c_2}}^{\op}(b_1)b_1^{-1} \cdot e
\end{align*}
The first factor lies in $C$ by Lemma \ref{lem:ideal} and we already know that $e\in C$. It follows that their product also lies in $C$.
\end{proof}

The next lemma shall also be helpful.

\begin{lem}\label{lem pre1}We have the equalities
\begin{align*}
 \lambda_{b_1}^{\op}(y) &=  e\lambda^{\op}_{\overline{b}}(y)e^{-1},\\
  \lambda_{c_2}^{\op}(x) &= f\lambda_c^{\op}(x)f^{-1},\\
   \lambda_{b_2}^{\op}(y) &= g\lambda_{\overline{b}}^{\op}(y)g^{-1}.
   \end{align*} 
\end{lem}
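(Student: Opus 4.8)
The plan is to prove all three equalities by the same pattern of computation, namely by unwinding each left-hand side using the fact that $\lambda^{\op}$ is a group homomorphism from $(A,\circ)$ to $\Aut(A,{\cdot\,})$, combined with the triviality of $B$ and $C$. Recall $b_1 = eb^{-1}$, $c_2 = fc$, and $b_2 = gb$ with $e,f,g\in B\cap C$. Since $b_1 = e\circ b^{-1} = e\circ \overline{b}$ (as $B$ is trivial so $\circ$ and $\cdot$ agree on $B$ and $\overline{b}=b^{-1}$), the homomorphism property gives $\lambda_{b_1}^{\op} = \lambda_e^{\op}\circ \lambda_{\overline b}^{\op}$. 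Now for $y\in C$ we have $\lambda_{\overline b}^{\op}(y)\in C$ because... wait, that requires $C$ to be a left ideal in $A^{\op}$, which holds by hypothesis (2) of Theorem \ref{thm:ito}. Hence $\lambda_e^{\op}$ applied to the element $\lambda_{\overline b}^{\op}(y)\in C$ equals conjugation by $e$, since $e\in C$ and $C$ is a trivial skew brace (so $\lambda_e^{\op}(z) = eze^{-1}$ for all $z\in C$). This yields $\lambda_{b_1}^{\op}(y) = e\,\lambda_{\overline b}^{\op}(y)\,e^{-1}$, the first equality.

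For the second equality, write $c_2 = f\circ c$ (again $\circ=\cdot$ on $C$), so $\lambda_{c_2}^{\op} = \lambda_f^{\op}\circ\lambda_c^{\op}$. For $x\in B$, we have $\lambda_c^{\op}(x)\in B$ because $B$ is a left ideal in $A^{\op}$; then applying $\lambda_f^{\op}$ with $f\in B$ and $B$ trivial gives conjugation by $f$, so $\lambda_{c_2}^{\op}(x) = f\,\lambda_c^{\op}(x)\,f^{-1}$. For the third equality, write $b_2 = g\circ b$, so $\lambda_{b_2}^{\op} = \lambda_g^{\op}\circ\lambda_b^{\op}$; for $y\in C$ we need $\lambda_b^{\op}(y)\in C$, which holds since $C$ is a left ideal in $A^{\op}$, and then $\lambda_g^{\op}$ with $g\in C$ trivial gives conjugation by $g$, yielding $\lambda_{b_2}^{\op}(y) = g\,\lambda_{\overline b}^{\op}(y)\,g^{-1}$ — here I use additionally that $\lambda_b^{\op}(y) = \lambda_{\overline{\overline b}}^{\op}(y)$, or more directly I should double-check whether the intended right-hand side has $\lambda_b^{\op}$ or $\lambda_{\overline b}^{\op}$; since $b$ and $\overline b = b^{-1}$ are both in the trivial skew brace $B$, and the statement as printed has $\lambda_{\overline b}^{\op}$, I would reconcile this by noting $b_2 = \lambda_{\overline{c_2}}^{\op}(b_1) = \lambda_{\overline{c_2}}^{\op}(e\overline b)$ and tracking the $\overline b$ through, or simply observing that the two match because the conjugation only sees the value.

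Concretely, the cleanest write-up first records the lemma: \emph{$B$ and $C$ trivial and left ideals in $A^{\op}$ imply $\lambda_z^{\op}$ restricts to conjugation by $z$ on $B$ when $z\in B$, and on $C$ when $z\in C$}, and that left-ideal-ness keeps $\lambda_a^{\op}(B)\subseteq B$, $\lambda_a^{\op}(C)\subseteq C$ for $a\in A$ is not needed — only for $a$ in the relevant subgroup, but actually we need it for general $a$ in the first and third equalities since $b\notin C$; that is exactly what "left ideal in $A^{\op}$" gives: $\lambda_a^{\op}(x)\in I$ for all $a\in A$, $x\in I$. Then each of the three lines is a two-step application of the homomorphism property of $\lambda^{\op}$ plus one invocation of this lemma.

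I expect the main obstacle to be purely bookkeeping: making sure that in each case the inner $\lambda^{\op}$ lands in the subgroup ($B$ or $C$) on which the outer $\lambda^{\op}$-by-$e$, $f$, or $g$ acts as conjugation, and getting the $b$ versus $\overline b = b^{-1}$ decoration in the first and third equalities to come out exactly as stated — which hinges on carefully substituting the definitions $b_1 = eb^{-1}$ and $b_2 = \lambda_{\overline{c_2}}^{\op}(b_1)$ and using that on the trivial skew brace $B$ one has $\overline{b_1} = b_1^{-1} = b\,e^{-1}$ and $\circ = \cdot$. There is no genuine difficulty beyond this; the algebra is short once the ideal/triviality hypotheses are quoted in the right order.
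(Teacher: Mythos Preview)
Your approach is correct and in fact slightly more streamlined than the paper's, but it proceeds by a genuinely different decomposition. You factor $b_1 = e\circ\overline{b}$, $c_2 = f\circ c$, $b_2 = g\circ\overline{b}$ directly (using that $\cdot = \circ$ on the trivial sub-skew braces $B$ and $C$ together with $e,f,g\in B\cap C$), apply the homomorphism property of $\lambda^{\op}$, and then use that for $z$ in a trivial sub-skew brace, $\lambda_z^{\op}$ acts as conjugation on that sub-skew brace; the left-ideal hypothesis on $B$ and $C$ in $A^{\op}$ guarantees the inner image lands where it must. The paper instead rewrites each element in the \emph{reverse} $\circ$-order, e.g.\ $b_1 = \overline{b}\circ\lambda_b^{\op}(e)$, then passes through the identity $\lambda_a^{\op}(z) = a\,\lambda_a(z)\,a^{-1}$ to reduce to the ordinary $\lambda$-map, and uses triviality in the form $\lambda_z(w)=w$ for $z,w$ in the same trivial sub-skew brace. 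Both routes invoke exactly the same hypotheses (triviality of $B,C$; $B,C$ left ideals in $A^{\op}$; $e,f,g\in B\cap C$), so neither buys extra generality; yours just avoids the detour through $\lambda$.

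One bookkeeping correction that dissolves your hesitation about $b$ versus $\overline{b}$ in the third line: from $g = b_2 b$ you get $b_2 = g b^{-1}$, not $b_2 = g b$. With $g\in B$ and $B$ trivial this is $b_2 = g\circ\overline{b}$, so $\lambda_{b_2}^{\op} = \lambda_g^{\op}\circ\lambda_{\overline{b}}^{\op}$, and the $\overline{b}$ in the stated equality $\lambda_{b_2}^{\op}(y) = g\,\lambda_{\overline{b}}^{\op}(y)\,g^{-1}$ falls out immediately, with no further reconciliation needed.
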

\begin{proof} 
Since $e,f,g\in B\cap C$ and $B,C$ are left ideals in $A^{\op}$, we know that
\[ \lambda_{\overline{c}}^{\op}(f) \in B\mbox{ and }\lambda_{b}^{\op}(e),\lambda_b^{\op}(g)\in C.\]
Since $B$ and $C$ are trivial skew braces, we then have
\[\lambda_{\lambda_{\overline{c}}^{\op}(f)} (x) = x,\,\  \lambda_{\lambda_b^{\op}(e)}(y) = y,\,\ \lambda_{\lambda_b^{\op}(g)}(y) = y.\]
It shall also be helpful to note that $\lambda$ and $\lambda^{\op}$ are related by
\[ \lambda^{\op}_{a}(z) = a\lambda_a(z)a^{-1}  \mbox{ for any }a,z\in A.\]
Using $b_1 = eb^{-1} = \overline{b} \circ \lambda_{b}^{\op}(e)$, we compute that 
\begin{align*}
\lambda_{b_1}^{\op}(y) & = b_1\lambda_{b_1}(y) {b_1}^{-1}\\
& = eb^{-1} \lambda_{\overline{b} \circ \lambda_{b}^{\op}(e)}(y) be^{-1}\\
& = eb^{-1}\lambda_{\overline{b}}(y) be^{-1} \\
& = e\lambda^{\op}_{\overline{b}}(y)e^{-1}.
\end{align*}
Similarly, using $ c_2 = fc = c\circ\lambda_{\overline{c}}^{\op}(f)$, we compute that
\begin{align*}
\lambda_{c_2}^{\op}(x) & = c_2\lambda_{c_2}(x)c_2^{-1}\\
&= fc \lambda_{c\circ \lambda_{\overline{c}}^{\op}(f)}(x) c^{-1}f^{-1}\\
&=fc \lambda_{c }(x) c^{-1}f^{-1}  \\
& =f \lambda_c^{\op}(x) f^{-1}.
\end{align*}
Finally, using $b_2 =gb^{-1} = \overline{b}\circ \lambda_b^{\op}(g)$, we compute that
 \begin{align*}
 \lambda_{b_2}^{\op}(y) & = b_2\lambda_{b_2}(y)b_2^{-1}\\
 & = gb^{-1}\lambda_{\overline{b}\circ \lambda_b^{\op}(g)}(y) bg^{-1}\\
 & =gb^{-1}\lambda_{\overline{b}}(y) bg^{-1}\\
 & = g\lambda_{\overline{b}}^{\op}(y)g^{-1}.
  \end{align*}
  The completes the proof.
  \end{proof}

We are now ready to prove the two equalities
\begin{equation}\label{eqn'}
\begin{cases}
\lambda_{b_1\circ c}(x*y) = \lambda_{c_2\circ b_2}(x*y)\\
\lambda_{b_1\circ c}(y*x) = \lambda_{c_2\circ b_2}(y*x)
\end{cases}
\end{equation}
in (\ref{eqn}), from which Theorem \ref{thm:ito} would follow. From Lemma \ref{lem:compare} (with the roles of $B$ and $C$ swapped for the equalities on the right), we see that
\begin{align*}
\lambda_{b_1\circ c}(x*y)&= \lambda_{b_1}^{\op}(x_1) * y_2,  & \lambda_{c_2\circ b_2}(y*x)&= \lambda_{c_2}^{\op}(y_3') * x_4',\\
 \lambda_{c_2\circ b_2}(x*y)&= x_1' * y_2',&\lambda_{b_1\circ c}(y*x)&= y_3 * x_4.
\end{align*}
Here $x_1,x_1'\in B$ and $y_2,y_2'\in C$ are any elements (which must exist because of the hypothesis $A=B\cdot C$) such that
\[ \begin{cases}
\lambda_{c}^{\op}(x)c \in  Cx_1 \\
\lambda_{b_1}(y) \in y_2B
\end{cases}\mbox{ and }\,\ \,
\begin{cases}
\lambda_{c_2}^{\op}(\lambda_{b_2}^{\op}(x))c_2\in Cx_1',\\
\lambda_{b_2}(y)\in y_2'B,
\end{cases}\]
while $y_3,y_3'\in C$ and $x_4,x_4'\in B$ are any elements (which must exist because\par\noindent of the hypothesis $A=B\cdot C$) such that
\[ \begin{cases}
\lambda_{b_2}^{\op}(y)b_2 \in  By_3' \\
\lambda_{c_2}(x) \in x_4'C
\end{cases}\mbox{ and }\,\
\begin{cases}
\lambda_{b_1}^{\op}(\lambda_{c}^{\op}(y))b_1\in By_3,\\
\lambda_{c}(x)\in x_4C.
\end{cases}\]
But as Lemmas \ref{lem1} and \ref{lem2} below show, we may take 
\[x_1' = \lambda_{b_1}^{\op}(x_1),\,\ y_2' = y_2,\,\ y_3' = \lambda_{\overline{c_2}}^{\op}(y_3),\,\ x_4' = x_4.\]
It follows that the equalities in (\ref{eqn'}) indeed hold.

\begin{lem}\label{lem1}In the above notation, we have 
\[\lambda_{b_2}(y)  =\lambda_{b_1}(y)\mbox{ and }
\lambda_{c_2}^{\op}(\lambda_{b_2}^{\op}(x))c_2 \in C\lambda^{\op}_{b_1}(x_1).\]
\end{lem}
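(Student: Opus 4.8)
The plan is to verify the two equalities in Lemma~\ref{lem1} by direct computation using the auxiliary elements $d,e,f,g\in B\cap C$ and the relations of Lemma~\ref{lem pre1}, which convert $\lambda^{\op}$ evaluated at $b_1,b_2,c_2$ into conjugates (by $e$, $g$, $f$ respectively) of $\lambda^{\op}$ evaluated at $\overline b$ or $c$. The point is that $b_1$ and $b_2$ differ only by an element of $B\cap C$ acting trivially in the relevant $\lambda$-slots, so their actions on elements of $C$ should coincide once one accounts for the conjugations.

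For the first equality $\lambda_{b_2}(y)=\lambda_{b_1}(y)$, I would start from $b_1 = eb^{-1}$ and $b_2 = gb^{-1}$, so that $b_2 = (gb^{-1})(eb^{-1})^{-1}\circ$-adjusted expression; more cleanly, write $b_2 = \overline b\circ\lambda_b^{\op}(g)$ and $b_1 = \overline b\circ\lambda_b^{\op}(e)$ as in the proof of Lemma~\ref{lem pre1}. Since $g\in C$ and $B,C$ are left ideals in $A^{\op}$, we have $\lambda_b^{\op}(g)\in C$, hence $\lambda_{\lambda_b^{\op}(g)}(y)=y$ because $C$ is trivial; the same holds for $e$. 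Therefore $\lambda_{b_2}(y)=\lambda_{\overline b}(\lambda_{\lambda_b^{\op}(g)}(y))=\lambda_{\overline b}(y)$ and likewise $\lambda_{b_1}(y)=\lambda_{\overline b}(y)$, giving equality. This is the easy half.

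For the second equality $\lambda_{c_2}^{\op}(\lambda_{b_2}^{\op}(x))c_2\in C\lambda^{\op}_{b_1}(x_1)$, recall $x_1$ is characterized by $\lambda_c^{\op}(x)c\in Cx_1$. I would use Lemma~\ref{lem pre1} to write $\lambda_{b_2}^{\op}(x)$... wait, that lemma gives $\lambda_{b_2}^{\op}(y)$ for $y\in C$; for $x\in B$ one instead uses $\lambda_{b_2}^{\op}(x)=b_2\circ x\circ\overline{b_2}$ and the fact that $b_2 = gb^{-1}$ with $g\in B$, so $\lambda_{b_2}^{\op}(x) = \lambda_g^{\op}(\lambda_{b^{-1}}^{\op}(x)) = g\lambda_{b^{-1}}^{\op}(x)g^{-1}$ using triviality of $B$ (conjugation). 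Then apply $\lambda_{c_2}^{\op}$, using $c_2 = fc$ with $f\in C$ so that $\lambda_{c_2}^{\op}(z) = f\lambda_c^{\op}(z)f^{-1}$ for $z\in\cdot$-products lying where triviality of $C$ applies (cf.\ Lemma~\ref{lem pre1}'s derivation), and finally right-multiply by $c_2 = fc$. The target $\lambda_{b_1}^{\op}(x_1) = \lambda_{b_1}^{\op}(x_1)$ is similarly unwound via $b_1 = eb^{-1}$. The strategy is to show both sides, modulo left multiplication by $C$, reduce to the same expression in $\lambda_c^{\op}$, $\lambda_b^{\op}$ applied to $x$, exploiting that $d,e,f,g\in B\cap C$ and that left-ideal/triviality hypotheses let us discard or commute these elements freely. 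The main obstacle is bookkeeping: one must carefully track which elements lie in $B$, which in $C$, which in $B\cap C$, and in which $\lambda$ or $\lambda^{\op}$ slot they act trivially, since a single misplaced factor breaks the coset containment. I expect the computation to hinge on repeatedly rewriting $\lambda^{\op}_a(z) = a\lambda_a(z)a^{-1}$ and on Lemma~\ref{lem:coset} to pass between $a\circ C$ and $Ca$.
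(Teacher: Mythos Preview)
Your argument for the first equality is correct and is a minor variant of the paper's. You factor $b_1=\overline b\circ\lambda_b^{\op}(e)$ and $b_2=\overline b\circ\lambda_b^{\op}(g)$ with $\lambda_b^{\op}(e),\lambda_b^{\op}(g)\in C$ (left-ideal hypothesis), so both $\lambda_{b_1}(y)$ and $\lambda_{b_2}(y)$ reduce to $\lambda_{\overline b}(y)$. The paper instead computes $\overline{b_1}\circ b_2=b_1^{-1}\lambda_{\overline{c_2}}^{\op}(b_1)$ and invokes the \emph{right}-ideal hypothesis on $C$ to place this in $C$; either route is one line.

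For the second containment your outline is plausible but misses the simplification that makes the paper's computation go through cleanly. You propose to expand $\lambda_{b_2}^{\op}(x)$ through $b_2=gb^{-1}$ and then separately unwind the target $\lambda_{b_1}^{\op}(x_1)$ through $b_1=eb^{-1}$, matching modulo $C$. The paper instead exploits the \emph{defining} relation $b_2=\lambda_{\overline{c_2}}^{\op}(b_1)$ from (\ref{def}): since $\lambda_{b_2}^{\op}(x)=b_2xb_2^{-1}$ by triviality of $B$, applying $\lambda_{c_2}^{\op}$ sends $b_2\mapsto b_1$, giving immediately
\[
\lambda_{c_2}^{\op}(\lambda_{b_2}^{\op}(x))\,c_2=b_1\,\lambda_{c_2}^{\op}(x)\,b_1^{-1}\cdot\lambda_{b_1}^{\op}(cd)=\lambda_{b_1}^{\op}\bigl(f\lambda_c^{\op}(x)f^{-1}\cdot cd\bigr),
\]
the last equality using Lemma~\ref{lem pre1} and that $f\lambda_c^{\op}(x)f^{-1}\in B$. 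The desired $\lambda_{b_1}^{\op}$ is now already on the outside; substituting $\lambda_c^{\op}(x)=y_1x_1c^{-1}$ from the coset defining $x_1$ and rewriting $c^{-1}f^{-1}c=\lambda_{\overline c}^{\op}(f^{-1})$, one factors off $\lambda_{b_1}^{\op}(x_1)$ on the right with the remaining factor $\lambda_{b_1}^{\op}\bigl(fy_1\cdot\lambda_{x_1}^{\op}(\lambda_{\overline c}^{\op}(f^{-1})d)\bigr)$ lying in $C$ by the left-ideal hypothesis. Your $g$-route can be reconciled with this since $ge^{-1}=b_2b_1^{-1}\in B\cap C$, but carrying that reconciliation through by hand is exactly the ``bookkeeping'' you anticipate; the identity $\lambda_{c_2}^{\op}(b_2)=b_1$ does it in one stroke.
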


\begin{proof}Since $B$ is a trivial skew brace, we have
\[ \overline{b_1}\circ b_2 = b_1^{-1}b_2 = b_1^{-1}\lambda_{\overline{c_2}}^{\op}(b_1),\]
and this lies in $C$ because $C$ is a right ideal in $A^{\op}$. Since $C$ is a trivial skew brace, we then get that $\lambda_{\overline{b_1}\circ b_2}(y) =y$, whence $\lambda_{b_2}(y)  =\lambda_{b_1}(y)$.

\vspace{2mm}

Using Lemma \ref{lem pre1}, we first rewrite
\begin{align*}
\lambda_{c_2}^{\op}(\lambda_{b_2}^{\op}(x))c_2 
&= \lambda_{c_2}^{\op}(b_2xb_2^{-1})c_2  &(\mbox{since $B$ is trivial})\\
& = b_1\lambda_{c_2}^{\op}(x) b_1^{-1}\lambda_{b_1}^{\op}(cd)\\
& = b_1f\lambda_c^{\op}(x) f^{-1} b_1^{-1} \lambda_{b_1}^{\op}(cd).
\end{align*}
Note that $\lambda_c^{\op}(x)\in B$ because $B$ is a left ideal in $A^{\op}$. Since $f\in B$ and  $B$ is a trivial skew brace, the above simplifies to
\begin{align*}
\lambda_{c_2}^{\op}(\lambda_{b_2}^{\op}(x))c_2 
& = \lambda_{b_1}^{\op}(f\lambda_c^{\op}(x) f^{-1})\lambda_{b_1}^{\op}(cd ).
\end{align*}
Since $\lambda_{c}^{\op}(x)c \in  Cx_1$ by choice, we may write
\[ \lambda_{c}^{\op}(x)c = y_1x_1\mbox{ or equivalently }\lambda_c^{\op}(x) = y_1x_1c^{-1}\mbox{ with }y_1\in C.\]
Since $f\in C$ and $C$ is a trivial skew brace, we obtain
\begin{align*}
\lambda_{c_2}^{\op}(\lambda_{b_2}^{\op}(x))c_2 
& = \lambda_{b_1}^{\op}(f y_1x_1\cdot c^{-1} f^{-1}c\cdot  d)\\
& = \lambda_{b_1}^{\op}(f y_1x_1\cdot \lambda_{\overline{c}}^{\op}(f^{-1})\cdot d)\\
& = \lambda_{b_1}^{\op}(f y_1\cdot x_1\lambda_{\overline{c}}^{\op}(f^{-1})dx_1^{-1})\cdot \lambda_{b_1}^{\op}(x_1).
\end{align*}
But $f\in B$ and $B$ is a left ideal in $A^{\op}$, so we have $\lambda_{\overline{c}}^{\op}(f^{-1})\in B$. Since $d\in B$ also and $B$ is a trivial skew brace, we get that
\begin{align*}
\lambda_{c_2}^{\op}(\lambda_{b_2}^{\op}(x))c_2 
& = \lambda_{b_1}^{\op}(f y_1\cdot  \lambda_{x_1}^{\op}(\lambda_{\overline{c}}^{\op}(f^{-1})d))\cdot\lambda_{b_1}^{\op}(x_1).
\end{align*}
But $f,d\in C$ also and $C$ is a left ideal in $A^{\op}$. This implies that
\[\lambda_{b_1}^{\op}(f y_1\lambda_{x_1}^{\op}(\lambda_{\overline{c}}^{\op}(f^{-1})d))\in C,\]
and the desired containment follows.
\end{proof}
 
\begin{lem}\label{lem2}In the above notation, we have
\[ \lambda_{c_2}(x) = \lambda_c(x)\mbox{ and }
\lambda_{b_2}^{\op}(y)b_2 \in B\lambda_{\overline{c_2}}^{\op}(y_3).\]
\end{lem}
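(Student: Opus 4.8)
The statement of Lemma \ref{lem2} is entirely parallel to Lemma \ref{lem1}, with the roles of $B$ and $C$ interchanged and $\cdot$ replaced by $\cdot^{\op}$ throughout, so the plan is to mimic the proof of Lemma \ref{lem1} step by step. First I would establish $\lambda_{c_2}(x)=\lambda_c(x)$. Since $C$ is a trivial skew brace, $\overline{c}\circ c_2 = c^{-1}c_2 = c^{-1}fc = \lambda_{\overline c}^{\op}(f)$, and this lies in $B$ because $f\in B$ and $B$ is a left ideal in $A^{\op}$; wait---for the analogue of the first part of Lemma \ref{lem1} we instead want this element in $B$ so that $\lambda$ acts trivially on $x\in B$. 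Indeed $c^{-1}c_2 = \lambda_{\overline c}^{\op}(f)\in B$ gives $\lambda_{\overline{c}\circ c_2}(x)=x$, hence $\lambda_{c_2}(x)=\lambda_c(x)$.

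For the containment $\lambda_{b_2}^{\op}(y)b_2 \in B\,\lambda_{\overline{c_2}}^{\op}(y_3)$, I would proceed as in Lemma \ref{lem1}. Using Lemma \ref{lem pre1}, rewrite
\begin{align*}
\lambda_{b_2}^{\op}(y)b_2 &= g\lambda_{\overline b}^{\op}(y)g^{-1}\cdot b_2.
\end{align*}
Then substitute $b_2 = \lambda_{\overline{c_2}}^{\op}(eb^{-1})$ and $g = b_2 b$, and expand using that $g\in B\cap C$, that $B$ and $C$ are trivial skew braces, and that $B$, $C$ are left (and, for $B$, right) ideals in $A^{\op}$, exactly as the dual argument did. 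Recall $y_3\in C$ is chosen so that $\lambda_{b_1}^{\op}(\lambda_c^{\op}(y))b_1\in By_3$. One writes $\lambda_{b_1}^{\op}(\lambda_c^{\op}(y))b_1 = x_1'' y_3$ with $x_1''\in B$ (here using $A=B\cdot C$, equivalently $A=C\cdot B$), so that $\lambda_c^{\op}(y)$ can be expressed in terms of $y_3$, and then push everything through the relevant $\lambda^{\op}$'s; the terms lying in $B$ get absorbed into the left factor, and what remains is $\lambda_{\overline{c_2}}^{\op}(y_3)$ up to such a $B$-factor. The key structural inputs are Lemma \ref{lem:ideal} (to know $d\in B\cap C$), the fact that $e,f,g\in B\cap C$, and that $B$ being a right ideal in $A^{\op}$ lets one move elements of $C$ back into $B$ after applying $\lambda^{\op}$.

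\textbf{Main obstacle.}
The routine part is bookkeeping, but the genuine difficulty---just as in Lemma \ref{lem1}---is that $\lambda^{\op}$ is only additive in the second variable up to a twist, so every time one splits a product like $\lambda_{b_1}^{\op}(uv)$ one picks up a conjugation-type factor $\lambda_{u}^{\op}(\cdots)$ that must itself be shown to lie in the correct one of $B$ or $C$. The delicate bookkeeping is tracking which of the auxiliary elements ($d,e,f,g,y_1$-type terms, the twisting factors) lands in $B$, which in $C$, and which in $B\cap C$, and invoking triviality of the correct sub-skew brace at each step to collapse a $\lambda$ to the identity. I expect the crux to be the analogue of the final simplification in Lemma \ref{lem1}, namely showing that the accumulated prefactor $\lambda_{b_2}^{\op}(\text{stuff})$ genuinely lies in $B$; this will again rest on $B$ being \emph{both} a left and right ideal in $A^{\op}$, which is precisely where hypothesis (2) of Theorem \ref{thm:ito} is used in full strength.
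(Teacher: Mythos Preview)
Your plan is correct and follows essentially the same route as the paper: the first equality is proved exactly as you describe, and for the containment the paper likewise starts from $\lambda_{b_2}^{\op}(y)b_2 = g\lambda_{\overline b}^{\op}(y)g^{-1}\cdot \lambda_{\overline{c_2}}^{\op}(eb^{-1})$, uses the defining relation for $y_3$ via Lemma~\ref{lem pre1} to write $cyc^{-1}b^{-1}=x_3y_3$, and then substitutes this expression for $b^{-1}$ (rather than for $\lambda_c^{\op}(y)$) inside $\lambda_{\overline{c_2}}^{\op}(eb^{-1})$ before collapsing the resulting prefactors into $B$ using $e,f,g\in B\cap C$ and the left/right ideal hypotheses on $B$. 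The only point where your outline is slightly off-target is the phrase ``express $\lambda_c^{\op}(y)$ in terms of $y_3$''---the actual substitution is for $b^{-1}$---but this is the same relation read differently and the rest of your bookkeeping anticipation is accurate.
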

\begin{proof}Since $f\in C$ and $C$ is a trivial skew brace, we have
\begin{equation}\label{above} \overline{c} \circ c_2 = c^{-1}c_2 = c^{-1}fc = \lambda_{\overline{c}}^{\op}(f).\end{equation}
But $f\in B$ also and $B$ is a left ideal in $A^{\op}$. It follows that $\overline{c} \circ c_2\in B$. Since \par\noindent $B$ is a trivial skew brace, we get that $\lambda_{\overline{c}\circ c_2}(x) = x$, whence $\lambda_{c_2}(x)  = \lambda_{c}(x)$.

\vspace{2mm}

Note that $\lambda_c^{\op}(y) =cyc^{-1}$ because $C$ is a trivial skew brace. By Lemma \ref{lem pre1} (with $y$ replaced by $\lambda_c^{\op}(y)$), we know that
\begin{align*}
\lambda_{b_1}^{\op}(\lambda_c^{\op}(y))b_1
& = e\lambda_{\overline{b}}^{\op}(\lambda_c^{\op}(y))e^{-1}b_1\\
& = e\lambda_{\overline{b}}^{\op}(\lambda_c^{\op}(y))\lambda_c^{\op}(y)^{-1} \cdot cyc^{-1}b^{-1}.
\end{align*}
Since $\lambda_{\overline{b}}^{\op}(\lambda_{c}^{\op}(y))\lambda_{c}^{\op}(y)^{-1} \in B$ by Lemma \ref{lem:ideal} and $e\in B$ also, we see that
\[cyc^{-1}b^{-1} \in B\lambda_{b_1}^{\op}(\lambda_{c}^{\op}(y))b_1,\]
which is equal to $By_3$ by choice. Thus, we may write
\[ cyc^{-1}b^{-1} = x_3y_3\mbox{ or equivalently }b^{-1}=cy^{-1}c^{-1}x_3y_3\mbox{ with }x_3\in B.\]
Now, using Lemma \ref{lem pre1}, we compute that
\begin{align*}
\lambda_{b_2}^{\op}(y)b_2 & = g\lambda_{\overline{b}}^{\op}(y)g^{-1} \lambda_{\overline{c_2}}^{\op}(eb^{-1})\\
& = g\lambda_{\overline{b}}^{\op}(y)y^{-1}\cdot yg^{-1}\cdot \lambda_{\overline{c_2}}^{\op}(ecy^{-1}c^{-1}x_3y_3).
\end{align*}
Since $g\in C$ and $C$ is a trivial skew brace, we have
\[ \lambda_{c_2}^{\op}(yg^{-1}) = c_2yg^{-1}c_2^{-1} \mbox{ or equivalently }yg^{-1} = \lambda_{\overline{c_2}}^{\op}(c_2yg^{-1}c_2^{-1}).\]
Recalling that $c_2 = fc$, we then obtain
\begin{align*}
\lambda_{b_2}^{\op}(y)b_2 
&= g\lambda_{\overline{b}}^{\op}(y)y^{-1}\cdot  \lambda_{\overline{c_2}}^{\op}(fcyg^{-1} c^{-1}f^{-1}\cdot e cy^{-1}c^{-1}x_3y_3 )\\
&=g\lambda_{\overline{b}}^{\op}(y)y^{-1} \cdot \lambda_{\overline{c_2}}^{\op}( f\cdot cy(g^{-1}\cdot c^{-1}f^{-1}ec)y^{-1}c^{-1}\cdot x_3)\cdot \lambda^{\op}_{\overline{c_2}}(y_3).
\end{align*}
But $e,f,g\in C$ and $C$ is a trivial skew brace, so the above becomes
\begin{align*}
 \lambda_{b_2}^{\op}(y)b_2 & =g\lambda_{\overline{b}}^{\op}(y)y^{-1} \cdot \lambda_{\overline{c_2}}^{\op}( f\cdot \lambda_{cy}^{\op}(g^{-1}\cdot c^{-1}f^{-1}ec)\cdot x_3)\cdot \lambda^{\op}_{\overline{c_2}}(y_3)\\
 & =g\lambda_{\overline{b}}^{\op}(y)y^{-1} \cdot \lambda_{\overline{c_2}}^{\op}( f\cdot \lambda_{cy}^{\op}(g^{-1}\lambda_{\overline{c}}^{\op}(f^{-1}e))\cdot x_3)\cdot \lambda^{\op}_{\overline{c_2}}(y_3).
 \end{align*}
But $e,f,g\in B$ also. We then deduce from Lemma \ref{lem:ideal} that
\[g\lambda_{\overline{b}}^{\op}(y)y^{-1}\in B,\mbox{ and } \lambda_{\overline{c_2}}^{\op}( f \lambda_{cy}^{\op}(g^{-1}\lambda_{\overline{c}}^{\op}(f^{-1}e))x_3)\in B\]
because $B$ is a left ideal in $A^{\op}$. The desired containment now follows.\end{proof}

This concludes the proof of Theorem \ref{thm:ito}. $\square$

\section{A construction of ditrivial skew braces} \label{construct sec}

In this section, let $B$ and $C$ be any groups with $C$ abelian. Also let
\[ \phi: C \longrightarrow \Aut(B)\mbox{ and } \psi : B \longrightarrow \Aut(C)\]
be any homomorphisms such that
\[ \phi_{\psi_b(c)} = \phi_c\mbox{ for all }b\in B\mbox{ and }c\in C.\]
Using these data, we may endow $B\times C$ with a skew brace structure.

\vspace{2mm}

 Put $A=B\times C$ and endow it with the operations ${\cdot\,}$ and $\circ$ defined by
  \begin{align*}
(b_1,c_1) \cdot (b_2,c_2) & = (b_1\phi_{c_1}(b_2),c_1c_2),\\
(b_1,c_1)\circ (b_2,c_2) & = (b_1b_2,c_1\psi_{b_1}(c_2)).
\end{align*}
Clearly $(A,{\cdot\, })$ and $(A,\circ)$ are groups such that
\begin{equation}\label{sd} (A,{\cdot\, })= B\rtimes_\phi C\mbox{ and }(A,\circ)= B\ltimes_\psi C.\end{equation}
The next lemma shows that $(A,{\cdot\, },\circ)$ is in fact a skew brace.

\begin{lem}For any $a_1,a_2,a_3\in A$, we have
\[ a_1 \circ (a_2\cdot a_3) = (a_1\circ a_2)\cdot a_1^{-1} \cdot (a_1\circ a_3). \]
\end{lem}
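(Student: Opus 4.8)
The plan is to verify the brace relation directly by expanding both sides in coordinates, using the semidirect-product formulas \eqref{sd} for the two group structures. Write $a_i = (b_i, c_i)$ for $i = 1,2,3$. First I would compute the left-hand side: $a_2 \cdot a_3 = (b_2 \phi_{c_2}(b_3), c_2 c_3)$, and then apply $a_1 \circ (-)$ to obtain
\[ a_1 \circ (a_2 \cdot a_3) = \bigl( b_1 b_2 \phi_{c_2}(b_3),\ c_1 \psi_{b_1}(c_2 c_3) \bigr) = \bigl( b_1 b_2 \phi_{c_2}(b_3),\ c_1 \psi_{b_1}(c_2) \psi_{b_1}(c_3) \bigr), \]
using that $\psi_{b_1}$ is a homomorphism.

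Next I would compute the right-hand side. We have $a_1 \circ a_2 = (b_1 b_2, c_1 \psi_{b_1}(c_2))$ and $a_1 \circ a_3 = (b_1 b_3, c_1 \psi_{b_1}(c_3))$. The inverse of $a_1 = (b_1, c_1)$ in $(A, {\cdot\,})$ is $a_1^{-1} = (\phi_{c_1^{-1}}(b_1^{-1}), c_1^{-1})$, since $(A,{\cdot\,}) = B \rtimes_\phi C$. Then I would multiply the three elements together in $(A,{\cdot\,})$, carefully tracking how each $\phi$ twists the $B$-components and noting that the $C$-components simply multiply (as $C$ is abelian). The first-coordinate computation is where the hypothesis $\phi_{\psi_b(c)} = \phi_c$ enters: after expanding $(a_1 \circ a_2) \cdot a_1^{-1}$ one encounters a factor of the form $\phi_{c_1 \psi_{b_1}(c_2)}$ acting on part of the $B$-component of $a_1^{-1}$, and the identity $\phi_{\psi_{b_1}(c_2)} = \phi_{c_2}$ (so that $\phi_{c_1 \psi_{b_1}(c_2)} = \phi_{c_1} \phi_{c_2}$ — here also using that $\phi$ is a homomorphism and $C$ abelian so the order is irrelevant) is exactly what is needed to make the twisting collapse correctly. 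I would then multiply by $(a_1 \circ a_3)$ and simplify, again invoking $\phi_{\psi_{b_1}(c_3)} = \phi_{c_3}$, and check the result matches the left-hand side coordinatewise.

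The second coordinate is the easy part: on both sides it is a product of $c_1$'s, $\psi_{b_1}(c_2)$, and $\psi_{b_1}(c_3)$, and since $C$ is abelian the inverse contributions of $a_1^{-1}$ cancel against the duplicated $c_1$, leaving $c_1 \psi_{b_1}(c_2) \psi_{b_1}(c_3)$, matching the left-hand side. The main obstacle — really the only place requiring care — is the bookkeeping in the first coordinate: one must apply the automorphisms in the correct order dictated by the semidirect product convention in \eqref{sd}, and see precisely where the compatibility condition $\phi_{\psi_b(c)} = \phi_c$ is used to resolve the mismatch between a $\phi$ indexed by a $C$-element coming from the $\circ$-product (which carries a $\psi_{b_1}$ twist) and the plain $\phi_{c_i}$ that appears on the left. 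Once the indices are lined up this is a mechanical verification, so I would present the two coordinate computations as a short aligned display and point to the single use of the hypothesis.
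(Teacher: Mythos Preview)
Your proposal is correct and follows essentially the same route as the paper: write $a_i=(b_i,c_i)$, expand both sides via the semidirect-product formulas, use that $C$ is abelian to simplify the second coordinate, and invoke $\phi_{\psi_{b_1}(c_2)}=\phi_{c_2}$ to match the first. One small remark: the compatibility hypothesis is only needed once (for $c_2$), since when you multiply by $a_1\circ a_3$ the twist is by the second coordinate of the \emph{left} factor, which is $\psi_{b_1}(c_2)$, so no separate invocation with $c_3$ arises.
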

\begin{proof}For each $i=1,2,3$, write $a_i = (b_i,c_i)$ for $b_i\in B$ and $c_i\in C$. Then
\begin{align*}
a_1 \circ (a_2\cdot a_3) & = (b_1,c_1)\circ (b_2\phi_{c_2}(b_3),c_2c_3)\\
&= (b_1b_2\phi_{c_2}(b_3),c_1\psi_{b_1}(c_2c_3)),
\end{align*}
and on the other hand, we have
\begin{align*}
&(a_1\circ a_2)\cdot a_1^{-1} \cdot (a_1\circ a_3)\\
 &\hspace{1em}= (b_1b_2,c_1\psi_{b_1}(c_2))\cdot (\phi_{c_1^{-1}}(b_1^{-1}),c_1^{-1})\cdot (b_1b_3,c_1\psi_{b_1}(c_3))\\
 &\hspace{1em} = (b_1b_2\phi_{c_1\psi_{b_1}(c_2)c_1^{-1}}(b_1^{-1}),c_1\psi_{b_1}(c_2)c_1^{-1})\cdot  (b_1b_3,c_1\psi_{b_1}(c_3))\\
 & \hspace{1em}= (b_1b_2\phi_{\psi_{b_1}(c_2)}(b_1^{-1}),\psi_{b_1}(c_2))\cdot (b_1b_3,c_1\psi_{b_1}(c_3)) &(C\mbox{ is abelian})\\
 &\hspace{1em}=(b_1b_2\phi_{\psi_{b_1}(c_2)}(b_1^{-1})\phi_{\psi_{b_1}(c_2)}(b_1b_3),\psi_{b_1}(c_2)c_1\psi_{b_1}(c_3))\\
 &\hspace{1em} = (b_1b_2\phi_{\psi_{b_1}(c_2)}(b_3),c_1\psi_{b_1}(c_2c_3)) &(C\mbox{ is abelian}).
\end{align*}
The claim now follows since $\phi_{\psi_{b_1}(c_2)} = \phi_{c_2}$ by hypothesis.
\end{proof}

It is immediate that $(A,{\cdot\,},\circ)$ is \emph{ditrivial}, in the sense that $A$ is factorizable as the product (with respect to both $\cdot$ and $\circ$) of two trivial sub-skew braces. Indeed, clearly $B\times 1$ and $1\times C$ are sub-skew braces of $A$ which are trivial as skew braces. We also have
\[ A = (B\times 1)\cdot (1\times C)\mbox{ and }
A = (B\times 1)\circ (1\times C)\]
by (\ref{sd}). The next two propositions tell us when $B\times 1$ and $1\times C$ are left or right ideals in $A$ and $A^{\op}$, respectively.

\begin{prop}\label{prop:ideal} The following statements hold.
\begin{enumerate}[$(1)$]
\item $B\times 1$ and $1\times C$ are always left ideals in $A$.
\item $B\times 1$ is a right ideal in $A$ if and only if $\psi$ is trivial.
\item $1\times C$ is a right ideal in $A$ if and only if $\phi$ is trivial.
\end{enumerate}
\end{prop}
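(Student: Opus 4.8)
The strategy is to unwind each statement through the lambda-map characterizations of left/right ideals recorded in Section 2, using the explicit formulas for $\cdot$ and $\circ$ on $A = B\times C$. First I would compute $\lambda_a$ for $a=(b_0,c_0)$: from $(b_0,c_0)^{-1} = (\phi_{c_0^{-1}}(b_0^{-1}), c_0^{-1})$ and the definition of $\circ$, a direct calculation gives $\lambda_{(b_0,c_0)}(b,c) = (b_0,c_0)^{-1}\cdot\big((b_0,c_0)\circ(b,c)\big) = \big(\phi_{c_0^{-1}}(b_0^{-1}b_0 b),\, c_0^{-1}c_0\psi_{b_0}(c)\big) = \big(\phi_{c_0^{-1}}(b),\, \psi_{b_0}(c)\big)$, where I use that $C$ is abelian to rearrange the second coordinate. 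Having this closed form makes all three parts mechanical.

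\textbf{Part (1).} By the criterion $I$ is a left ideal iff $\lambda_a(I)\subseteq I$ for all $a$, I just check that $\lambda_{(b_0,c_0)}(B\times 1) = \phi_{c_0^{-1}}(B)\times 1 \subseteq B\times 1$ and $\lambda_{(b_0,c_0)}(1\times C) = 1\times \psi_{b_0}(C)\subseteq 1\times C$, both of which are immediate since $\phi_{c_0^{-1}}\in\Aut(B)$ and $\psi_{b_0}\in\Aut(C)$.

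\textbf{Parts (2) and (3).} Here I use the criterion $I$ is a right ideal iff $\lambda_x(a)a^{-1}\in I$ for all $x\in I$, $a\in A$. For (2), take $x=(b,1)\in B\times 1$ and $a=(b_0,c_0)$ arbitrary; then $\lambda_{(b,1)}(b_0,c_0) = (\phi_1(b_0), \psi_b(c_0)) = (b_0,\psi_b(c_0))$, so $\lambda_{(b,1)}(a)\cdot a^{-1} = (b_0,\psi_b(c_0))\cdot(\phi_{c_0^{-1}}(b_0^{-1}),c_0^{-1})$, whose second coordinate is $\psi_b(c_0)c_0^{-1}$. This element lies in $B\times 1$ for all $b,c_0$ iff $\psi_b(c_0) = c_0$ for all $b\in B$, $c_0\in C$, i.e. iff $\psi$ is trivial. (When $\psi$ is trivial the first coordinate $b_0\phi_{\psi_b(c_0)c_0^{-1}}(b_0^{-1}) = b_0\phi_1(b_0^{-1}) = 1$ automatically, so the condition is exactly triviality of $\psi$.) For (3), symmetrically take $x=(1,c)\in 1\times C$ and $a=(b_0,c_0)$; then $\lambda_{(1,c)}(b_0,c_0) = (\phi_{c^{-1}}(b_0), \psi_1(c_0)) = (\phi_{c^{-1}}(b_0), c_0)$, so $\lambda_{(1,c)}(a)\cdot a^{-1}$ has first coordinate $\phi_{c^{-1}}(b_0)\cdot\phi_{c_0^{-1}}(\cdots)$; computing via the product rule, the first coordinate of $(\phi_{c^{-1}}(b_0),c_0)\cdot(\phi_{c_0^{-1}}(b_0^{-1}),c_0^{-1})$ is $\phi_{c^{-1}}(b_0)\,\phi_{c_0}(\phi_{c_0^{-1}}(b_0^{-1})) = \phi_{c^{-1}}(b_0)b_0^{-1}$, and the second coordinate is $1$. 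Thus $\lambda_{(1,c)}(a)a^{-1}\in 1\times C$ for all $c,b_0$ iff $\phi_{c^{-1}}(b_0) = b_0$ for all $c\in C$, $b_0\in B$, i.e. iff $\phi$ is trivial.

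\textbf{Expected obstacle.} There is no real obstacle; the only care needed is bookkeeping in the group $(A,\cdot) = B\rtimes_\phi C$ — getting the inverse $(b_0,c_0)^{-1}$ right and tracking which $\phi$-twist appears in each product — together with remembering to invoke commutativity of $C$ at the two points where the second coordinate is rearranged. One should also double-check that in (2) and (3) the "only if" direction genuinely forces triviality of the full homomorphism (not just on a subset), which it does since $b,c_0$ (resp. $c,b_0$) range over all of $B,C$ (resp. $C,B$).
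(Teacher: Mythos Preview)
Your proof is correct and follows essentially the same approach as the paper: compute $\lambda_{(b_0,c_0)}(b,c)=(\phi_{c_0^{-1}}(b),\psi_{b_0}(c))$ explicitly and then read off the left/right ideal criteria. The only cosmetic difference is that the paper first computes the general product $\lambda_{(x,y)}(b,c)\cdot(b,c)^{-1}=(\phi_{y^{-1}}(b)b^{-1},\psi_x(c)c^{-1})$ (invoking the compatibility $\phi_{\psi_x(c)}=\phi_c$ to simplify the first coordinate) and then specializes to $y=1$ and $x=1$, whereas you specialize first and thereby sidestep that hypothesis; either way the argument is the same direct verification.
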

\begin{proof}Let $b,x\in B$ and $c,y\in C$ be arbitrary. We have
\begin{align*}
\lambda_{(b,c)}(x,y) & = (b,c)^{-1}\cdot ((b,c)\circ (x,y))\\
& = (\phi_{c^{-1}}(b^{-1}),c^{-1})\cdot (bx, c\psi_b(y))\\
&= (\phi_{c^{-1}}(x), \psi_b(y)).
\end{align*}
In particular, this implies that 
\begin{align*}
\lambda_{(b,c)}(x,1) &= (\phi_{c^{-1}}(x),1) \in B\times 1,\\
\lambda_{(b,c)}(1,y) & = (1,\psi_b(y))\in 1\times C.
\end{align*}
Thus, indeed $B\times 1$ and $1\times C$ are left ideals in $A$. Similarly, we have
\begin{align*}
\lambda_{(x,y)}(b,c) \cdot (b,c)^{-1}& = (\phi_{y^{-1}}(b), \psi_x(c))\cdot (\phi_{c^{-1}}(b^{-1}),c^{-1})\\
& = (\phi_{y^{-1}}(b)\phi_{\psi_x(c)c^{-1}}(b^{-1}),\psi_x(c)c^{-1})\\
&= (\phi_{y^{-1}}(b) b^{-1},\psi_x(c)c^{-1}),
\end{align*}
where $\phi_{\psi_x(c)} = \phi_c$ by the hypothesis. In particular, we get that
\begin{align*}
\lambda_{(x,1)}(b,c)\cdot (b,c)^{-1} &= (1,\psi_x(c)c^{-1}),\\
 \lambda_{(1,y)}(b,c) \cdot (b,c)^{-1}& = (\phi_{y^{-1}}(b)b^{-1}, 1).
\end{align*}
We then deduce that
\begin{align*}
B\times 1\mbox{ is a right ideal in $A$} & \iff \psi_x(c)=c\mbox{ for all $x\in B$ and $c\in C$},\\
1\times C\mbox{ is a right ideal in $A$} &  \iff \phi_{y}(b) = b\mbox{ for all $y\in C$ and $b\in B$}.
\end{align*}
This proves the proposition.
\end{proof}

\begin{prop}\label{prop:ideal op}The following statements hold.
\begin{enumerate}[$(1)$]
\item $B\times 1$ is always a left ideal in $A^{\op}$.
\item $1\times C$ is a left ideal in $A^{\op}$ if and only if $\phi$ is trivial.
\item $1\times C$ is always a right ideal in $A^{\op}$.
\item $B\times 1$ is a right ideal in $A^{\op}$ if and only if $\psi$ is trivial.
\end{enumerate}
\end{prop}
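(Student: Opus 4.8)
The plan is to mimic the proof of Proposition \ref{prop:ideal}, but now working in the opposite skew brace $A^{\op}$. The key is to compute the lambda map $\lambda^{\op}$ of $A^{\op}$ explicitly on the generators of $B\times 1$ and $1\times C$, and then read off the ideal conditions from the criteria recorded in Section 2: namely, $I$ is a left ideal in $A^{\op}$ iff $\lambda^{\op}_a(x)\in I$ for all $x\in I$, $a\in A$, and a right ideal in $A^{\op}$ iff $a^{-1}\lambda^{\op}_x(a)\in I$ for all $x\in I$, $a\in A$.

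First I would recall that $\lambda^{\op}_a(b) = (a\circ b)a^{-1}$, where here the inverse is taken with respect to $\cdot$ since subgroups and inverses of $(A,{\cdot\,})$ and $(A,{\cdot^{\op}})$ agree only as sets, not as operations — so one must be careful: in $A^{\op}$ the relevant product is $\cdot^{\op}$, meaning $a\cdot^{\op} b = b\cdot a$. Concretely, for $a=(b,c)$ and a general element $z=(x,y)$, compute $a\circ z = (bx,\,c\psi_b(y))$ and then $\lambda^{\op}_{(b,c)}(x,y) = (a\circ z)\cdot^{\op} a^{-1}$, where $a^{-1} = (\phi_{c^{-1}}(b^{-1}),c^{-1})$ is the $\cdot$-inverse. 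Carrying out the $\cdot^{\op}$-product (i.e. the $\cdot$-product in the reversed order) gives a closed formula for $\lambda^{\op}_{(b,c)}(x,y)$. Specializing to $(x,1)$ and $(1,y)$ and invoking the compatibility hypothesis $\phi_{\psi_b(c)}=\phi_c$ (just as in the proof of Proposition \ref{prop:ideal}) should collapse the expressions enough to see that the $B$-component of $\lambda^{\op}_a(x,1)$ always lies in $B$ — giving (1) — while the $C$-component of $\lambda^{\op}_a(1,y)$ lies in $1\times C$ precisely when the $\phi$-twist disappears, i.e. when $\phi$ is trivial, giving (2).

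For the right-ideal statements (3) and (4), I would similarly compute $a^{-1}\cdot^{\op}\lambda^{\op}_{(x,y)}(b,c)$ for $(x,y)$ ranging over the generators of each subgroup. Taking $(x,y)=(1,y)\in 1\times C$ and simplifying via $\phi_{\psi_x(c)}=\phi_c$ should show the result always lands in $1\times C$, yielding (3); taking $(x,y)=(x,1)\in B\times 1$ should produce a $\psi$-twist in the $C$-component that vanishes exactly when $\psi$ is trivial, yielding (4). Each direction of the ``if and only if'' then follows by exhibiting, when $\phi$ (resp. $\psi$) is nontrivial, a specific choice of parameters making the offending component leave the subgroup.

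The main obstacle will be bookkeeping: because $\cdot^{\op}$ reverses the order of the $\cdot$-multiplication, the semidirect-product twists $\phi$ appear in a different slot than in Proposition \ref{prop:ideal}, and one must track carefully which element gets conjugated by which automorphism, then apply $\phi_{\psi_b(c)}=\phi_c$ and the fact that $C$ is abelian at exactly the right moments to get cancellation. There is no conceptual difficulty — it is the same computation as Proposition \ref{prop:ideal} transported to $A^{\op}$ — but the risk of sign/order errors is real, so I would organize the proof by first stating the two general formulas for $\lambda^{\op}_{(b,c)}(x,y)$ and $a^{-1}\cdot^{\op}\lambda^{\op}_{(x,y)}(b,c)$ cleanly, and only then specialize.
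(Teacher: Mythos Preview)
Your approach is exactly the paper's: compute $\lambda^{\op}_{(b,c)}(x,y)$ in closed form, specialize to $(x,1)$ and $(1,y)$, and then repeat for the right-ideal criterion. However, you have already committed the order error you warn yourself about. You correctly recall $\lambda^{\op}_a(z)=(a\circ z)a^{-1}$ with the \emph{ordinary} product $\cdot$ (this is the formula from Section~2, obtained after converting $\cdot^{\op}$ back to $\cdot$), but you then rewrite it as $(a\circ z)\cdot^{\op}a^{-1}$. Since $u\cdot^{\op}v=v\cdot u$, that expression equals $a^{-1}(a\circ z)=\lambda_a(z)$, the lambda map of $A$ rather than of $A^{\op}$; following it literally would simply reproduce Proposition~\ref{prop:ideal}. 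The same slip occurs in your right-ideal test: the correct criterion is $a^{-1}\cdot\lambda^{\op}_x(a)\in I$ (equivalently $\lambda^{\op}_x(a)\cdot^{\op}a^{-1}\in I$), not $a^{-1}\cdot^{\op}\lambda^{\op}_x(a)$.

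The paper avoids this trap by converting $\cdot^{\op}$ to $\cdot$ once and for all and never writing $\cdot^{\op}$ again during the computation; you should do the same. With the corrected formulas one obtains
\[
\lambda^{\op}_{(b,c)}(x,y)=(bx\,\phi_y(b^{-1}),\ \psi_b(y)),\qquad
(b,c)^{-1}\lambda^{\op}_{(x,y)}(b,c)=(\phi_{c^{-1}}(b^{-1}xb)x^{-1},\ c^{-1}\psi_x(c)),
\]
after using that $C$ is abelian and $\phi_{\psi_b(y)}=\phi_y$, and (1)--(4) drop out by the specializations you describe. One minor wording issue: for (2) the obstruction to $\lambda^{\op}_a(1,y)\in 1\times C$ sits in the $B$-component $b\phi_y(b^{-1})$, not the $C$-component.
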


\begin{proof}
Let $b,x\in B$ and $c,y\in C$ be arbitrary. We have
\begin{align*}
\lambda^{\op}_{(b,c)}(x,y) & = ((b,c)\circ (x,y))\cdot (b,c)^{-1}\\
& =  (bx, c\psi_b(y))\cdot (\phi_{c^{-1}}(b^{-1}),c^{-1})\\
&= (bx\phi_{c\psi_b(y)c^{-1}}(b^{-1}),c\psi_b(y)c^{-1})\\
&= (bx\phi_{\psi_b(y)}(b^{-1}),\psi_b(y)) &(C\mbox{ is abelian})\\
&= (bx\phi_{y}(b^{-1}),\psi_b(y)),
\end{align*}
where $\phi_{\psi_b(y)}  = \phi_y$ by hypothesis. In particular, this yields
\[\lambda^{\op}_{(b,c)}(x,1)  = (bxb^{-1},1)\in B\times 1,\]
which shows that $B\times 1$ is a left ideal in $A^{\op}$. We also have
\[ \lambda^{\op}_{(b,c)}(1,y) = (b\phi_y(b^{-1}),\psi_b(y)),\]
which implies that
\begin{align*}
1\times C\mbox{ is a left ideal in $A^{\op}$} 
&\iff \phi_y(b) = b \mbox{ for all $y\in C$ and $b\in B$}.
\end{align*}
Similarly, we compute that
\begin{align*}
(b,c)^{-1}\cdot \lambda_{(x,y)}^{\op}(b,c) & = (\phi_{c^{-1}}(b^{-1}),c^{-1})\cdot (xb\phi_c(x^{-1}),\psi_x(c))\\
& = (\phi_{c^{-1}}(b^{-1}xb)x^{-1},c^{-1}\psi_x(c)).
\end{align*}
In particular, we get that
\[(b,c)^{-1}\cdot \lambda_{(1,y)}^{\op}(b,c) = (1,1)\in 1\times C,\]
and hence $1\times C$ is a right ideal in $A^{\op}$. We also have
\[(b,c)^{-1}\cdot \lambda_{(x,1)}^{\op}(b,c) =(\phi_{c^{-1}}(b^{-1}xb)x^{-1},c^{-1}\psi_x(c)), \]
which implies that
\begin{align*}
B\times 1\mbox{ is a right ideal in $A^{\op}$} 
&\iff \psi_x(c) = c\mbox{ for all $x\in B$ and $c\in C$}.
\end{align*}
This proves the proposition.
\end{proof}

Now, since $(A,{\cdot\,},\circ)$ is ditrivial, in view of Theorem \ref{thm:ito} it is natural to ask whether it is a meta-trivial skew brace. By Proposition \ref{prop:A'}, we know that $A'$ is generated by
\[ \{(b,1) * (1,c) : b\in B,\, c\in C\}\cup \{(1,c)*(b,1) : c\in C,\, b\in B\} \]
as a subgroup of $(A,{\cdot \, })$. For any $b\in B$ and $c\in C$, we compute that
\begin{align}\label{b*c}
(b,1)*(1,c) & = (b,1)^{-1}\cdot ((b,1)\circ (1,c))\cdot (1,c)^{-1}\\\notag
& = (b^{-1},1) \cdot (b,\psi_b(c)) \cdot (1,c^{-1})\\\notag
& = (1,\psi_b(c))\cdot (1,c^{-1})\\\notag
& = (1,\psi_b(c)c^{-1}),\\\label{c*b}
(1,c) * (b,1) & = (1,c)^{-1}\cdot ((1,c)\circ (b,1))\cdot (b,1)^{-1}\\\notag
&= (1,c^{-1})\cdot (b,c) \cdot  (b^{-1},1)\\\notag
& = (\phi_{c^{-1}}(b),1)\cdot (b^{-1},1)\\\notag
& = (\phi_{c^{-1}}(b)b^{-1},1).
\end{align}
The next proposition provides us with a criterion for determining when $A$ is a meta-trivial skew brace.

\begin{prop}\label{prop:criterion}The skew brace $A$ is meta-trivial if and only if
\[\psi_{\phi_{c_1}(b_1)b_1^{-1}}( \psi_{b_2}(c_2)c_2^{-1}) =  \psi_{b_2}(c_2)c_2^{-1}\]
for all $b_1,b_2\in B$ and $c_1,c_2\in C$. 
\end{prop}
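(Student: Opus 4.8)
The plan is to use the characterization of meta-triviality as $A' * A' = 1$ together with the explicit generators of $A'$ computed in \eqref{b*c} and \eqref{c*b}. By Corollary \ref{cor:gen}, since the generating set is symmetric (closed under the roles of the two factors), it suffices to check that $g * h = 1$ for all pairs $g, h$ drawn from the four types
\[ (1, \psi_{b}(c)c^{-1}), \quad (\phi_{c^{-1}}(b)b^{-1}, 1), \]
i.e. from $1 \times C$-type elements and $B \times 1$-type elements. So I would reduce the problem to evaluating $\lambda_{g}(h) = h$ for these four combinations.

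First I would record the general formula for $\lambda_{(b,c)}(x,y) = (\phi_{c^{-1}}(x), \psi_b(y))$, which is already computed in the proof of Proposition \ref{prop:ideal}. From this, $(b,c) * (x,y) = 1$ if and only if $\phi_{c^{-1}}(x) = x$ and $\psi_b(y) = y$. Now I plug in the four cases. When both $g$ and $h$ lie in $B \times 1$: $g = (\phi_{c_1^{-1}}(b_1)b_1^{-1}, 1)$ has trivial $C$-component, so the $\psi$-condition is automatic, and the $\phi$-condition reads $\phi_1(\cdots) = \cdots$, which is trivially true. When both lie in $1 \times C$: $g = (1, \psi_{b_1}(c_1)c_1^{-1})$ has trivial $B$-component, so the $\phi$-condition is automatic ($\phi$ applied to the identity), and since $C$ is abelian $\psi_1 = \mathrm{id}$ forces... wait, here $g$ has $B$-component $1$, so $\psi_{1} = \mathrm{id}$ and the $\psi$-condition holds trivially too. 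The mixed case $g \in B \times 1$, $h \in 1 \times C$: again $g$ has trivial $C$-component so $\psi$-condition is automatic, and $h$ has trivial $B$-component so $\phi$-condition is automatic. So three of the four cases are vacuous.

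The only surviving case is $g \in 1 \times C$ acting on $h \in B \times 1$: take $g = (1, \psi_{b_1}(c_1)c_1^{-1})$ — hmm, but this has the wrong shape; I need $g$ to have nontrivial $B$-component to constrain $h$ via $\phi$, or nontrivial $C$-component to constrain via $\psi$. Let me re-examine: with $g = (1, s)$ where $s = \psi_{b_1}(c_1)c_1^{-1} \in C$ and $h = (t, 1)$ where $t = \phi_{c_2^{-1}}(b_2)b_2^{-1} \in B$, the condition $g * h = 1$ becomes $\phi_{s^{-1}}(t) = t$ and $\psi_1(1) = 1$; the latter is automatic, and the former reads $\phi_{s^{-1}}(t) = t$. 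And conversely, with $g = (u, 1)$, $u = \phi_{c_1^{-1}}(b_1)b_1^{-1} \in B$ and $h = (1, v)$, $v = \psi_{b_2}(c_2)c_2^{-1} \in C$, the condition $g*h = 1$ becomes $\phi_1(1) = 1$ (automatic) and $\psi_u(v) = v$, i.e. $\psi_{\phi_{c_1^{-1}}(b_1)b_1^{-1}}(\psi_{b_2}(c_2)c_2^{-1}) = \psi_{b_2}(c_2)c_2^{-1}$. Since the map $c_1 \mapsto c_1^{-1}$ is a bijection of $C$ and $b_1$ ranges over all of $B$, the set of elements $\phi_{c_1^{-1}}(b_1)b_1^{-1}$ coincides with the set $\phi_{c_1}(b_1)b_1^{-1}$ as $(b_1,c_1)$ ranges over $B \times C$, which is exactly the form in the statement. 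The $\phi$-condition from the other orientation, $\phi_{s^{-1}}(t) = t$, would need to be shown redundant — or perhaps it is automatically implied, since $s = \psi_{b_1}(c_1)c_1^{-1}$ and by the hypothesis $\phi_{\psi_b(c)} = \phi_c$ we get $\phi_{s} = \phi_{\psi_{b_1}(c_1)c_1^{-1}} = \phi_{\psi_{b_1}(c_1)}\phi_{c_1}^{-1} = \phi_{c_1}\phi_{c_1}^{-1} = \mathrm{id}$ (using that $C$ is abelian so $\phi$ is a homomorphism on the abelian group $C$). Hence $\phi_s = \mathrm{id}$, so $\phi_{s^{-1}}(t) = t$ holds unconditionally.

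The main obstacle is bookkeeping: carefully tracking which of the four generator-pair combinations impose nontrivial constraints, and in particular recognizing that the hypothesis $\phi_{\psi_b(c)} = \phi_c$ combined with abelianness of $C$ makes $\phi$ trivial on the subgroup $\{\psi_b(c)c^{-1}\}$, so that only the single $\psi$-condition survives. I would therefore structure the proof as: (i) apply Corollary \ref{cor:gen} to reduce to the four cases; (ii) dispatch the three vacuous ones in a sentence each using $\lambda_{(b,c)}(x,y) = (\phi_{c^{-1}}(x), \psi_b(y))$; (iii) show the remaining case gives exactly the stated identity, after checking the auxiliary $\phi$-condition is automatic. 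One subtlety to verify along the way: Corollary \ref{cor:gen} requires $X \subseteq Y$, which holds here since $X = Y$, so it applies directly and no condition (i) of Proposition \ref{prop:gen} needs separate checking.
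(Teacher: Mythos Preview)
Your proposal is correct and follows essentially the same route as the paper: reduce via Corollary~\ref{cor:gen} to the four generator-pair cases, observe that the two same-type cases are trivially vacuous, show the mixed case $(1,s)*(t,1)$ is automatic because $\phi_s=\phi_{\psi_{b_1}(c_1)}\phi_{c_1}^{-1}=\mathrm{id}$ from the standing hypothesis $\phi_{\psi_b(c)}=\phi_c$, and identify the remaining mixed case $(u,1)*(1,v)$ as exactly the stated $\psi$-condition. Your second paragraph momentarily miscounts which cases are vacuous (only the two same-type cases are \emph{trivially} automatic; the third requires the $\phi_s=\mathrm{id}$ argument you supply later), but you catch and repair this in the third paragraph, so the final argument is sound and matches the paper's.
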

\begin{proof}By the above discussion, we know that $A'$ is generated by $S_C\cup S_B$ as a subgroup of $(A,{\cdot \,})$, where we define
\begin{align*}
S_C & =\{ (1,\psi_b(c)c^{-1}) : b\in B,\, c\in C\} ,\\
S_B & =\{ (\phi_{c}(b)b^{-1},1): c\in C,\, b\in B\}.
\end{align*}
It then follows from Corollary \ref{cor:gen} that $A$ is meta-trivial, that is $A'*A'=1$, if and only if $x_1*x_2 =1$ for all $x_1*x_2\in S_C\cup S_B$. Notice that $S_C\subseteq 1\times C$ and $S_B\subseteq B\times 1$. Since $1\times C$ and $B\times 1$ are trivial skew braces, plainly
\[ x_1*x_2=1\mbox{ whenever }x_1,x_2\in S_C\mbox{ or }x_1,x_2\in S_B.\]
For any $b_1,b_2\in B$ and $c_1,c_2\in C$, using (\ref{c*b}), we compute that
\begin{align*}
&(1,\psi_{b_1}(c_1)c_1^{-1}) * (\phi_{c_2}(b_2)b_2^{-1},1)\\
& \hspace{1em}= (\phi_{c_1\psi_{b_1}(c_1)^{-1}}(\phi_{c_2}(b_2)b_2^{-1})(\phi_{c_2}(b_2)b_2^{-1})^{-1},1)\\
&\hspace{1em}= (1,1)
\end{align*}
because $\phi_{c_1\psi_{b_1}(c_1)^{-1}}=\mathrm{id}_B$ by hypothesis. Similarly, using (\ref{b*c}), we get that
\begin{align*}
&(\phi_{c_1}(b_1)b_1^{-1},1)* (1,\psi_{b_2}(c_2)c_2^{-1})\\
&\hspace{1em} = (1,\psi_{\phi_{c_1}(b_1)b_1^{-1}}(\psi_{b_2}(c_2)c_2^{-1})(\psi_{b_2}(c_2)c_2^{-1})^{-1})
\end{align*}
This element equals $(1,1)$ exactly when
\[ \psi_{\phi_{c_1}(b_1)b_1^{-1}}(\psi_{b_2}(c_2)c_2^{-1}) = \psi_{b_2}(c_2)c_2^{-1}\]
and the claim now follows.
\end{proof}

\begin{cor}\label{cor:metatrivial}The skew brace $A$ is meta-trivial when $\phi$ or $\psi$ is trivial.
\end{cor}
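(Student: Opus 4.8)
The plan is to apply the criterion established in Proposition \ref{prop:criterion} and simply verify the displayed identity
\[ \psi_{\phi_{c_1}(b_1)b_1^{-1}}( \psi_{b_2}(c_2)c_2^{-1}) =  \psi_{b_2}(c_2)c_2^{-1} \]
in each of the two cases separately. First I would treat the case when $\psi$ is trivial: then $\psi_{b_2}$ is the identity automorphism of $C$, so $\psi_{b_2}(c_2)c_2^{-1} = 1$, and applying any automorphism $\psi_{\phi_{c_1}(b_1)b_1^{-1}}$ to $1$ again gives $1$, so both sides of the identity equal the trivial element. Hence the criterion is satisfied and $A$ is meta-trivial.

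Next I would treat the case when $\phi$ is trivial: then $\phi_{c_1}$ is the identity automorphism of $B$, so $\phi_{c_1}(b_1)b_1^{-1} = b_1 b_1^{-1} = 1$, whence the subscript automorphism is $\psi_{\phi_{c_1}(b_1)b_1^{-1}} = \psi_1 = \mathrm{id}_C$. The left-hand side of the identity therefore collapses to $\psi_{b_2}(c_2)c_2^{-1}$, which is exactly the right-hand side. Again the criterion of Proposition \ref{prop:criterion} holds, so $A$ is meta-trivial.

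There is essentially no obstacle here: the corollary is an immediate specialization of Proposition \ref{prop:criterion}, and the only thing to note is that the identity in that proposition is symmetric enough that killing either $\phi$ or $\psi$ makes one of the two ``defect'' terms $\phi_{c_1}(b_1)b_1^{-1}$ or $\psi_{b_2}(c_2)c_2^{-1}$ vanish, which in turn trivializes the whole equation. I would write the proof in just a few lines, handling the two cases in parallel, and would not need any of the $\lambda$-map computations from the earlier sections beyond what Proposition \ref{prop:criterion} already packages.
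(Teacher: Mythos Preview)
Your proposal is correct and matches the paper's intended argument: the corollary is placed immediately after Proposition~\ref{prop:criterion} with no separate proof, so it is meant to follow by exactly the specialization you describe. The paper does add, as a supplementary remark, an alternative direct argument via quotients (if $\phi$ is trivial then $1\times C$ is an ideal with trivial quotient $B\times 1$, and similarly for $\psi$), but this is presented as an aside rather than the primary proof.
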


It is also easy to see directly why Corollary \ref{cor:metatrivial} is true. If $\phi$ is trivial, then $1\times C$ is clearly an ideal in $A$ and $A/(1\times C)\simeq B\times 1$ as skew braces. If $\psi$ is trivial, then $B\times 1$ is an ideal in $A$ and $A/(B\times 1)\simeq 1\times C$ as skew braces. Since $B\times 1$ and $1\times C$ are trivial skew braces, in both cases we get that $A$ is meta-trivial.


\section{Two families of ditrival skew braces}\label{ex sec}

We now apply the construction described in Section \ref{construct sec} to exhibit examples of skew braces which are factorizable as the product of two trivial sub-skew braces. We give two families of examples -- one consists of meta-trivial skew braces while the other consists of non-meta-trivial skew braces.

\subsection{First family}

Let $p$ be any odd prime. Take 
\[ B = \mathbb{Z}/p^m\mathbb{Z}\mbox{ and }C = \mathbb{Z}/p^n\mathbb{Z},\]
where $m$ and $n$ are any natural numbers. Also take
\begin{align*}
\phi : \mathbb{Z}/p^n \mathbb{Z}\longrightarrow \Aut(\mathbb{Z}/p^m\mathbb{Z});&\,\ \phi_c(x) = (1+p^{m-k})^cx,\\
\psi : \mathbb{Z}/p^m\mathbb{Z}\longrightarrow \Aut(\mathbb{Z}/p^n\mathbb{Z});&\,\ \psi_b(y) = (1+p^{n-\ell})^by,
\end{align*}
where $k$ and $\ell$ are any natural numbers satisfying
\begin{equation}\label{ineq}
k\leq \min\{m,n\},\,\ \ell \leq \min\{m,n\},\,\ k\leq n-\ell.
\end{equation}
The multiplicative orders of $(1+p^{m-k})$ mod $p^m$ and $(1+p^{n-\ell})$ mod $p^n$ are equal to $p^k$ and $p^\ell$, respectively. The conditions $k\leq n$ and $\ell \leq m$ are here to ensure that $\phi$ and $\psi$ are well-defined. As for the last condition $k\leq n-\ell$, it is imposed in order to prove the following lemma.

\begin{lem}\label{lem phi}For any $b\in \mathbb{Z}/p^m\mathbb{Z}$ and $c\in \mathbb{Z}/p^n\mathbb{Z}$, we have $\phi_{\psi_b(c)}=\phi_c$.
\end{lem}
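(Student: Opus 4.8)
The plan is to verify the identity $\phi_{\psi_b(c)} = \phi_c$ by a direct computation, showing that $\psi_b(c)$ and $c$ differ by an element of the kernel of $\phi$. First I would recall that $\phi_c$ is the automorphism $x \mapsto (1+p^{m-k})^c x$ of $\mathbb{Z}/p^m\mathbb{Z}$, so that $\phi$ factors through $\mathbb{Z}/p^n\mathbb{Z} \to (\mathbb{Z}/p^m\mathbb{Z})^\times$ sending $c$ to the unit $(1+p^{m-k})^c$; since $(1+p^{m-k})$ has multiplicative order exactly $p^k$ modulo $p^m$, the map $\phi$ depends only on $c \bmod p^k$. Thus it suffices to show
\[
\psi_b(c) \equiv c \pmod{p^k}
\]
for all $b \in \mathbb{Z}/p^m\mathbb{Z}$ and $c \in \mathbb{Z}/p^n\mathbb{Z}$.

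Next I would compute $\psi_b(c) = (1+p^{n-\ell})^b c$ and hence $\psi_b(c) - c = \big((1+p^{n-\ell})^b - 1\big) c$. Expanding $(1+p^{n-\ell})^b - 1$ by the binomial theorem, every term is divisible by $p^{n-\ell}$ (the constant term cancels), so $\psi_b(c) - c$ is divisible by $p^{n-\ell}$ as an element of $\mathbb{Z}/p^n\mathbb{Z}$. By the hypothesis $k \le n - \ell$ from \eqref{ineq}, we have $p^{n-\ell}$ divides $p^k \cdot (\text{something})$... more precisely $n - \ell \ge k$, so $p^k \mid p^{n-\ell}$, hence $p^k \mid (\psi_b(c) - c)$; equivalently $\psi_b(c) \equiv c \pmod{p^k}$. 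Combining with the previous paragraph gives $\phi_{\psi_b(c)} = \phi_c$.

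I do not expect any real obstacle here; the only point requiring a little care is the bookkeeping about what "mod $p^k$" means for an element of $\mathbb{Z}/p^n\mathbb{Z}$ versus an exponent acting on $\mathbb{Z}/p^m\mathbb{Z}$. The cleanest way to phrase it is: since $(1+p^{m-k})^{p^k} \equiv 1 \pmod{p^m}$, the quantity $(1+p^{m-k})^c$ depends only on the residue of $c$ modulo $p^k$, and we have just shown $\psi_b(c)$ and $c$ have the same residue modulo $p^k$ (using $k \le n-\ell$), whence $(1+p^{m-k})^{\psi_b(c)} = (1+p^{m-k})^c$ in $\mathbb{Z}/p^m\mathbb{Z}$ and therefore $\phi_{\psi_b(c)} = \phi_c$ as automorphisms of $\mathbb{Z}/p^m\mathbb{Z}$.
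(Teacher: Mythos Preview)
Your proof is correct and follows essentially the same approach as the paper: both compute $\psi_b(c)-c = ((1+p^{n-\ell})^b-1)c$, observe this is divisible by $p^{n-\ell}$, and use that $(1+p^{m-k})$ has multiplicative order $p^k$ together with $k\le n-\ell$ to conclude $\phi_{\psi_b(c)-c}$ is the identity. The only difference is cosmetic---you phrase it as ``$\phi$ depends only on $c\bmod p^k$'' while the paper directly shows $(1+p^{m-k})^{\psi_b(c)-c}=1$.
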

\begin{proof}The automorphism $\phi_{\psi_b(c)-c}$ of $\mathbb{Z}/p^m\mathbb{Z}$ is multiplication by
\[(1+p^{m-k})^{\psi_b(c)-c} = (1+p^{m-k})^{((1+p^{n-\ell})^{b}-1)c}.\]
Since $p^{n-\ell}$ divides $(1+p^{n-\ell})^{b}-1$ and $(1+p^{m-k})$ mod $p^m$ has multiplicative order $p^k$, the assumption $k\leq n-\ell$ implies that 
\[ (1+p^{m-k})^{\psi_b(c)-c} = 1.\]
This means that $\phi_{\psi_b(c)-c}$ is the identity, as desired.
\end{proof}

Lemma \ref{lem phi} implies that the construction in Section \ref{construct sec} applies. Hence, the set $A=\mathbb{Z}/p^m\mathbb{Z}\times\mathbb{Z}/p^n\mathbb{Z}$ endowed with the operations
  \begin{align*}
(b_1,c_1) \cdot (b_2,c_2) & = (b_1 + (1+p^{m-k})^{c_1}b_2,c_1+c_2)\\
(b_1,c_1)\circ (b_2,c_2) & = (b_1+b_2,c_1+(1+p^{n-\ell})^{b_1}c_2)
\end{align*}
becomes a skew brace. Moreover, we have
\begin{align*}
 A &= (\mathbb{Z}/p^m\mathbb{Z}\times 0)\cdot (0\times \mathbb{Z}/p^n\mathbb{Z}),\\
A &= (\mathbb{Z}/p^m\mathbb{Z}\times 0)\circ (0\times \mathbb{Z}/p^n\mathbb{Z}),\end{align*}
where both of the factors are sub-skew braces of $A$ which are trivial as skew braces. It turns out that $A$ is always meta-trivial.

\begin{prop}The skew brace $A$ is meta-trivial.
\end{prop}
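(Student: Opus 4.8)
The plan is to apply the criterion of Proposition \ref{prop:criterion}, which reduces meta-triviality to verifying the single identity
\[ \psi_{\phi_{c_1}(b_1)b_1^{-1}}\bigl( \psi_{b_2}(c_2)c_2^{-1}\bigr) =  \psi_{b_2}(c_2)c_2^{-1}\]
for all $b_1,b_2\in \mathbb{Z}/p^m\mathbb{Z}$ and $c_1,c_2\in \mathbb{Z}/p^n\mathbb{Z}$. Writing everything additively, I would first compute the two ``commutator-type'' elements appearing in the formula: the element $\phi_{c_1}(b_1)-b_1 = ((1+p^{m-k})^{c_1}-1)b_1$ lies in $p^{m-k}\mathbb{Z}/p^m\mathbb{Z}$, and the element $\psi_{b_2}(c_2)-c_2 = ((1+p^{n-\ell})^{b_2}-1)c_2$ lies in $p^{n-\ell}\mathbb{Z}/p^n\mathbb{Z}$.

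Next I would plug these into the required identity. The left-hand side is multiplication of $u := \psi_{b_2}(c_2)-c_2$ by $(1+p^{n-\ell})^{v}$, where $v := \phi_{c_1}(b_1)-b_1 \in p^{m-k}\mathbb{Z}$. So the identity holds precisely when $\bigl((1+p^{n-\ell})^{v}-1\bigr)u \equiv 0 \pmod{p^n}$. Since $p^{m-k}$ divides $v$ and the multiplicative order of $(1+p^{n-\ell})$ mod $p^n$ is $p^\ell$, the element $(1+p^{n-\ell})^{v}-1$ is divisible by $p^{\min\{m-k+(n-\ell),\,n\}}$ — more simply, it suffices to note $(1+p^{n-\ell})^v - 1$ is divisible by $p^{(m-k)+(n-\ell)}$ when $m-k \geq 0$ (valid as $k\leq m$ by \eqref{ineq}), using the standard fact that raising $1+p^{j}$ to a power divisible by $p^{i}$ gives something $\equiv 1 \pmod{p^{i+j}}$ for odd $p$. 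On the other hand $u$ is divisible by $p^{n-\ell}$. Hence $\bigl((1+p^{n-\ell})^v-1\bigr)u$ is divisible by $p^{(m-k)+(n-\ell)+(n-\ell)}$, and it is $0$ mod $p^n$ as soon as $(m-k)+2(n-\ell)\geq n$, i.e. $n \leq (m-k) + 2(n-\ell)$. I should double-check this follows from \eqref{ineq}: from $k\leq n-\ell$ we get $n-\ell \geq k$, so $(m-k)+2(n-\ell) \geq (m-k) + (n-\ell) + k = m + (n-\ell) \geq n$ provided $m \geq \ell$, which is exactly $\ell \leq m$ from \eqref{ineq}. So the identity holds.

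I expect the only delicate point to be the $p$-adic valuation bookkeeping for $(1+p^{n-\ell})^v - 1$: one wants the clean estimate $v_p\bigl((1+p^{j})^v-1\bigr) \geq j + v_p(v)$ for odd $p$, which is the lifting-the-exponent principle and requires $p$ odd (this is why the hypothesis ``$p$ any odd prime'' is present). Once that estimate is in hand, the rest is the short arithmetic with the inequalities in \eqref{ineq} sketched above, and the result follows immediately from Proposition \ref{prop:criterion}. I would present the argument as: invoke the criterion; record the two valuation facts; combine; conclude.
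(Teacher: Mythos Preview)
Your proposal is correct and follows essentially the same argument as the paper's proof: both invoke Proposition~\ref{prop:criterion}, compute that the exponent $\phi_{c_1}(b_1)-b_1$ is divisible by $p^{m-k}$ and the element $\psi_{b_2}(c_2)-c_2$ by $p^{n-\ell}$, deduce that the difference in question is divisible by $p^{2(n-\ell)+(m-k)}$, and then verify $2(n-\ell)+(m-k)\geq n$ from the inequalities in \eqref{ineq}. The paper writes the final inequality as $2(n-\ell)+(m-k)=n+(m-\ell)+(n-\ell-k)\geq n$, which is the same arithmetic you carry out.
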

\begin{proof}For any $b_1,b_2\in \mathbb{Z}/p^m\mathbb{Z}$ and $c_1,c_2\in \mathbb{Z}/p^n\mathbb{Z}$, we have
\begin{align*}
& \psi_{\phi_{c_1}(b_1)-b_1}(\psi_{b_2}(c_2)-c_2) - (\psi_{b_2}(c_2)-c_2)\\
 &\hspace{1em}= ((1+p^{n-\ell})^{\phi_{c_1}(b_1)-b_1}-1)(\psi_{b_2}(c_2)-c_2)\\
 &\hspace{1em}= ((1+p^{n-\ell})^{((1+p^{m-k})^{c_1}-1)b_1}-1)((1+p^{n-\ell})^{b_2}-1)c_2.
 \end{align*}
For the first factor, since $p^{m-k}$ divides $(1+p^{m-k})^{c_1}-1$, we see that
\[(1+p^{n-\ell})^{((1+p^{m-k})^{c_1}-1)b_1}\equiv 1\hspace{-2mm}\pmod{p^{n-\ell + m -k}}.\]
For the second factor, it is clear that
\[(1+p^{n-\ell})^{b_2}\equiv 1\hspace{-2mm}\pmod{p^{n-\ell}}.\]
It then follows that
\[\psi_{\phi_{c_1}(b_1)-b_1}(\psi_{b_2}(c_2)-c_2) - (\psi_{b_2}(c_2)-c_2) \equiv 0\hspace{-2mm} \pmod{p^{2(n-\ell) + (m-k)}}.\]
But by the assumption (\ref{ineq}), we have
\[ 2(n-\ell) + (m-k) = n + (m-\ell) + (n-\ell - k)\geq n,\]
and we deduce from Proposition \ref{prop:criterion} that $A$ is meta-trivial.
\end{proof}

As we noted after Corollary \ref{cor:metatrivial}, it is obvious that $A$ is a meta-trivial skew brace when $\phi$ or $\psi$ is trivial, or equivalently, when $k=m$ or $\ell =n$. But it is possible that $k\leq m-1$ and $\ell \leq n-1$, so our construction here does yield non-trivial examples. For $1\leq m, n,k,\ell \leq 3$, the quadruples 
\[(2, 2, 1, 1),\, (2, 3, 1, 1), \, (2, 3, 1, 2),\, (3, 2, 1, 1),\]
\[(3, 3, 1, 1), \, (3, 3,1, 2),\,  (3, 3, 2, 1)\]
are examples of $(m,n,k,\ell)$ with $k\leq m-1$ and $\ell\leq n-1$ which satisfy (\ref{ineq}). 
For $1\leq m, n,k,\ell \leq 10$, there are $1025$ such quadruples.

\subsection{Second family} Let $p$ be any odd prime. Take
\[ B = (\mathbb{Z}/2\mathbb{Z})^m \mbox{ and }C = (\mathbb{Z}/p\mathbb{Z})^n,\]
where $m$ and $n$ are any natural numbers such that $n\geq 2$ and
\[ |\mathrm{GL}_m(\mathbb{Z}/2\mathbb{Z})| = 2^{m\choose 2} \prod_{i=1}^{m}(2^i - 1)\]
is divisible by $p$. Also take
\begin{align*}
\phi: (\mathbb{Z}/p\mathbb{Z})^n\longrightarrow \Aut((\mathbb{Z}/2\mathbb{Z})^m);&\,\ \phi_{(v_1,\dots,v_n)}(\vec{x}) = P^{v_1}\vec{x},\\
\psi: (\mathbb{Z}/2\mathbb{Z})^m \longrightarrow\Aut((\mathbb{Z}/p\mathbb{Z})^n);&\,\ \psi_{(u_1,\dots,u_m)}(\vec{y}) = E^{u_1}\vec{y}.
\end{align*}
Here $P \in \mathrm{GL}_m(\mathbb{Z}/2\mathbb{Z})$ is any element of order $p$, and $E\in \mathrm{GL}_n(\mathbb{Z}/p\mathbb{Z})$ is any non-identity diagonal matrix with diagonal entries
\[ (1,\epsilon_2,\dots,\epsilon_n)\mbox{ with }\epsilon_2,\dots,\epsilon_n\in \{-1,1\}.\]
Plainly $E$ has order $2$ so indeed $\psi$ is well-defined.
  
\begin{lem}\label{lem:phipsi}
For any $b\in (\mathbb{Z}/2\mathbb{Z})^m$ and $c\in (\mathbb{Z}/p\mathbb{Z})^n$, we have $\phi_{\psi_b(c)} = \phi_c$.
\end{lem}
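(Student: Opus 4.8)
The plan is to unwind the two homomorphisms $\phi$ and $\psi$ in the definition and reduce the claimed identity $\phi_{\psi_b(c)} = \phi_c$ to a statement about a single exponent. Writing $b = (u_1,\dots,u_m)$ and $c = (v_1,\dots,v_n)$, the automorphism $\phi_c$ is multiplication by $P^{v_1}$, so it only depends on the first coordinate $v_1$ of $c$. Hence it suffices to check that the first coordinate of $\psi_b(c)$ equals $v_1$, because then $\phi_{\psi_b(c)}$ and $\phi_c$ are literally the same power of $P$.

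The key computation is then just reading off the first coordinate of $\psi_b(c) = E^{u_1}\vec{y}$ where $\vec{y} = c$. Since $E$ is the diagonal matrix with diagonal entries $(1,\epsilon_2,\dots,\epsilon_n)$, the first entry of $E$ is $1$, so the first entry of $E^{u_1}$ is $1^{u_1} = 1$ regardless of the value of $u_1 \in \mathbb{Z}/2\mathbb{Z}$. Therefore the first coordinate of $\psi_b(c)$ is $1\cdot v_1 = v_1$, exactly the first coordinate of $c$. This is the one place where the specific shape of $E$ — that its $(1,1)$-entry is $1$ rather than an arbitrary $\pm 1$ — is used; it is the whole point of singling out the first diagonal entry.

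I do not expect any real obstacle here: the argument is a one-line coordinate check once the observation "$\phi_c$ depends only on $v_1$ and $\psi_b$ fixes the first coordinate" is made explicit. The only thing to be careful about is the bookkeeping of which coordinate the matrices $P$ and $E$ act through (the first coordinate of $c$ feeds $\phi$, the first coordinate of $b$ feeds $\psi$), so I would state the computation of $\psi_b(c)$ coordinatewise to make the cancellation transparent. The conclusion $\phi_{\psi_b(c)} = \phi_{c}$ is then immediate, which is what we need in order to invoke the construction of Section \ref{construct sec}.
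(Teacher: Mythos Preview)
Your proposal is correct and is essentially the same argument as the paper's: both reduce to the observation that the first coordinate of $\psi_b(c)$ equals the first coordinate $v_1$ of $c$, so that $\phi_{\psi_b(c)}=P^{v_1}=\phi_c$. The paper phrases this as showing $\psi_b(c)-c$ has first entry $0$ (splitting on $u_1=0$ or $1$), whereas you note directly that the $(1,1)$-entry of $E^{u_1}$ is $1^{u_1}=1$; these are the same computation.
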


\begin{proof}Writing $b = (u_1,\dots,u_m)$ and $c = (v_1,\dots,v_n)$, we have
\[\psi_b(c) - c  =E^{u_1}
  \begin{bmatrix}v_1\\\vdots  \\ v_n\end{bmatrix} - \begin{bmatrix} v_1\\\vdots \\v_n\end{bmatrix}.\]
If $u_1=0$, then this is the zero vector. If $u_1=1$, then this is equal to
\[\begin{bmatrix} 0 \\ w_2\\\vdots \\ w_n\end{bmatrix},\mbox{ where }w_i = \begin{cases}
0 & \mbox{when }\epsilon_i = 1,\\
-2v_i &\mbox{when }\epsilon_i=-1.
\end{cases}\]
The first entry is $0$ (since $n\geq 2$) so this lies in the kernel of $\phi$. This implies that $\phi_{\psi_b(c) - c}$ is the identity, as desired.
\end{proof}

Lemma \ref{lem:phipsi} implies that the construction in Section \ref{construct sec} applies. Hence, the set $A=(\mathbb{Z}/2\mathbb{Z})^m\times(\mathbb{Z}/p\mathbb{Z})^n$ endowed with the operations
  \begin{align*}
\left(\left[\begin{smallmatrix}u_1\\\vdots\vspace{1mm}\\u_m\end{smallmatrix}\right],\left[\begin{smallmatrix}v_1\\\vdots\vspace{1mm}\\v_n\end{smallmatrix}\right]\right) \cdot \left(\left[\begin{smallmatrix}x_1\\\vdots\vspace{1mm}\\x_m\end{smallmatrix}\right],\left[\begin{smallmatrix}y_1\\\vdots\vspace{1mm}\\y_n\end{smallmatrix}\right]\right)  &
=\left(\left[\begin{smallmatrix}u_1\\\vdots\vspace{1mm}\\u_m\end{smallmatrix}\right]+P^{v_1}\left[\begin{smallmatrix}x_1\\\vdots\vspace{1mm}\\x_m\end{smallmatrix}\right] ,\left[\begin{smallmatrix}v_1+y_1\\\vdots\vspace{1mm} \\v_n+y_n\end{smallmatrix}\right] \right)\\
\left(\left[\begin{smallmatrix}u_1\\\vdots\vspace{1mm}\\u_m\end{smallmatrix}\right],\left[\begin{smallmatrix}v_1\\\vdots\vspace{1mm}\\v_n\end{smallmatrix}\right]\right) \circ \left(\left[\begin{smallmatrix}x_1\\\vdots\vspace{1mm}\\x_m\end{smallmatrix}\right],\left[\begin{smallmatrix}y_1\\\vdots\vspace{1mm}\\y_n\end{smallmatrix}\right]\right)  &
=\left(\left[\begin{smallmatrix}u_1+x_1\\\vdots\vspace{1mm}\\u_m+u_m\end{smallmatrix}\right],\left[\begin{smallmatrix}v_1\\\vdots\vspace{1mm} \\v_n\end{smallmatrix}\right] +E^{u_1}\left[\begin{smallmatrix}y_1\\\vdots\vspace{1mm} \\y_n\end{smallmatrix}\right] \right)
\end{align*}
becomes a skew brace. Moreover, we have
\begin{align*}
 A &= ((\mathbb{Z}/2\mathbb{Z})^m\times 0)\cdot (0\times (\mathbb{Z}/p\mathbb{Z})^n),\\
 A &= ((\mathbb{Z}/2\mathbb{Z})^m\times 0)\circ (0\times (\mathbb{Z}/p\mathbb{Z})^n),\end{align*}
where both of the factors are sub-skew braces of $A$ which are trivial as skew braces. This skew brace $A$ is not meta-trivial in a lot of cases.

\begin{prop}\label{cri}The skew brace $A$ is not meta-trivial whenever there is an element $v\in \mathbb{Z}/p\mathbb{Z}$ such that $P^{v}-I_m$ has a non-zero entry in the first row.
\end{prop}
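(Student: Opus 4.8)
The strategy is to apply the criterion of Proposition~\ref{prop:criterion} and produce a single choice of parameters for which the displayed equality fails. Since $B=(\mathbb{Z}/2\mathbb{Z})^m$ and $C=(\mathbb{Z}/p\mathbb{Z})^n$ are written additively, the first step is to rewrite that criterion in matrix form. Taking $b_1=\vec{u},\,b_2=\vec{w}\in(\mathbb{Z}/2\mathbb{Z})^m$ and $c_1=\vec{v},\,c_2=\vec{t}\in(\mathbb{Z}/p\mathbb{Z})^n$, and letting $v_1,w_1$ denote the first coordinates of $\vec{v},\vec{w}$, one has
\[ \phi_{c_1}(b_1)b_1^{-1}=(P^{v_1}-I_m)\vec{u},\qquad \psi_{b_2}(c_2)c_2^{-1}=(E^{w_1}-I_n)\vec{t}. \]
Because $\psi_{\vec{s}}$ acts on $(\mathbb{Z}/p\mathbb{Z})^n$ as multiplication by $E$ raised to the first coordinate of $\vec{s}$, Proposition~\ref{prop:criterion} says that $A$ is meta-trivial precisely when
\[ E^{\sigma}(E^{w_1}-I_n)\vec{t}=(E^{w_1}-I_n)\vec{t}\qquad\mbox{for all }\vec{u},\vec{w},\vec{v},\vec{t}, \]
where $\sigma\in\mathbb{Z}/2\mathbb{Z}$ is the first coordinate of $(P^{v_1}-I_m)\vec{u}$.

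Next I would break this equality. First arrange the vector on the right to be a nonzero $(-1)$-eigenvector of $E$: since $E$ is a non-identity diagonal matrix with first diagonal entry $1$, there is an index $i\ge 2$ with $\epsilon_i=-1$ (this uses $n\ge2$), so taking $w_1=1$ and $\vec{t}=\vec{e}_i$, the $i$th standard basis vector, gives $(E^{w_1}-I_n)\vec{t}=(\epsilon_i-1)\vec{e}_i=-2\vec{e}_i$, which is nonzero because $p$ is odd, and $E(-2\vec{e}_i)=2\vec{e}_i\ne -2\vec{e}_i$ again because $p$ is odd. It remains to force $\sigma=1$. By hypothesis there is $v\in\mathbb{Z}/p\mathbb{Z}$ such that $P^v-I_m$ has a nonzero entry, hence an entry equal to $1$ in $\mathbb{Z}/2\mathbb{Z}$, in its first row, say in column $j$; then choosing $\vec{v}=(v,0,\dots,0)$ and $\vec{u}=\vec{e}_j$ makes $(P^{v_1}-I_m)\vec{u}$ equal to the $j$th column of $P^v-I_m$, whose first coordinate is that $(1,j)$-entry, so $\sigma=1$. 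With these choices the equality of Proposition~\ref{prop:criterion} fails, and therefore $A$ is not meta-trivial.

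I do not anticipate any genuine obstacle: the argument is a short bookkeeping computation once the criterion has been translated into matrix form. The only delicate points are the repeated appeals to $p$ being odd (to keep $-2\vec{e}_i$ nonzero and distinct from $2\vec{e}_i$) and to $n\ge2$ together with $E\ne I_n$ (to guarantee an index $i\ge2$ with $\epsilon_i=-1$) — both of which are standing hypotheses of this family — and verifying that the four parameters $b_1,b_2,c_1,c_2$ are chosen so that they interact as intended.
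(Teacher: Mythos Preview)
Your proposal is correct and follows essentially the same approach as the paper: translate Proposition~\ref{prop:criterion} into the matrix language of $P$ and $E$, use the hypothesis on $P^v-I_m$ to make the first coordinate of $\phi_{c_1}(b_1)-b_1$ equal to $1$, and use an index $i\ge2$ with $\epsilon_i=-1$ to produce a nonzero element of $\{\psi_{b_2}(c_2)-c_2\}$ on which $E$ acts as $-1$. The only cosmetic difference is that the paper first identifies the whole set $\{\psi_b(c)-c\}$ as $\langle\vec{e}_i:\epsilon_i=-1\rangle$ and then works with $\vec{e}_i$, whereas you pick explicit $b_2,c_2$ and work with $-2\vec{e}_i$; the verification is the same.
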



\begin{proof}Since $p$ is odd, the calculation in Lemma \ref{lem:phipsi} shows that
\begin{align*}
& \{ \psi_b(c) -c : b \in (\mathbb{Z}/2\mathbb{Z})^m ,\, c\in (\mathbb{Z}/p\mathbb{Z})^n\} \\
&\hspace{1.5cm}= \langle \vec{e}_i : i \in \{2,\dots,n\}\mbox{ with }\epsilon_i = -1\rangle,
\end{align*}
where $\vec{e}_i\in (\mathbb{Z}/p\mathbb{Z})^n$ is the vector having $1$ in the $i$th entry and $0$ everywhere else. Let $i\in \{2,\dots,n\}$ be such that $\epsilon_i=-1$, which exists because $E$ is not the identity matrix by choice. Also, let $j\in\{1,\dots,m\}$ be such that the $j$th entry of the first row of $P^v- I_m$ is $1$. The first entry of $(P^v-I_m)b$ is then $1$, where $b \in(\mathbb{Z}/2\mathbb{Z})^m$ is the vector having $1$ in the $j$th entry and $0$ everywhere else. Taking $c =(v,0,\dots,0)$, we then obtain
\[ \psi_{\phi_c(b)-b}(\vec{e}_i) = \psi_{(P^v-I_m)b}(\vec{e}_i) = E\vec{e}_i = -\vec{e_i},\]
which does not equal to $\vec{e}_i$ because $p$ is odd. It now follows from Proposition \ref{prop:criterion} that $A$ is not meta-trivial.
\end{proof}

It is not hard to find elements $P\in \mathrm{GL}_m(\mathbb{Z}/2\mathbb{Z})$ of odd prime order $p$ such that $P$ satisfies the hypothesis of Proposition \ref{cri}. Here are a few examples when we take $v=1$.
\begin{enumerate}[(1)]
\item $m = 2,\,  p=3,\,$ and $P = \left[\begin{smallmatrix}0 & 1\\ 1 & 1\end{smallmatrix}\right]$.
\vspace{2mm}
\item $m=3,\, p=3,\, $ and $P = \left[\begin{smallmatrix}0 &0& 1\\ 1  & 0& 0\\ 0 & 1 & 0\end{smallmatrix}\right]$.
\vspace{2mm}
\item $m=3,\, p=7,\, $ and $P = \left[\begin{smallmatrix}0 &0& 1\\ 1  & 1& 0\\ 0 & 1 & 0\end{smallmatrix}\right]$.
\vspace{2mm}
\item $m=4,\, p=3,\, $ and $P = \left[\begin{smallmatrix}0 &1&0&0\\1&1&0&0\\0&0&0&1\\0&0&1&1\end{smallmatrix}\right]$.
\vspace{2mm}
\item $m=4,\, p=5,\, $ and $P = \left[\begin{smallmatrix}0 &1&  0 &0\\ 0& 0& 1 & 0\\ 0 & 0 & 0 & 1\\ 1 & 1 &1 & 1\end{smallmatrix}\right]$.
\vspace{2mm}
\item $m=4,\, p=7,\, $ and $P = \left[\begin{smallmatrix}0 &0&  0 &1\\ 1  & 0& 1 & 0\\ 0 & 1 & 1 & 0\\ 0 & 0 & 1 & 0\end{smallmatrix}\right]$.
\end{enumerate}
Note that there is no restriction on $n$ and $E$, except that $n\geq 2$ and $E$ is a diagonal matrix of order $2$.


\begin{thebibliography}{99}

\bibitem{product book}
A. Ballester-Bolinches, R. Esteban-Romero, and M. Asaad, \emph{Products of finite groups}. De Gruyter Expositions in Mathematics, 53. Walter de Gruyter GmbH \& Co. KG, Berlin, 2010.  

\bibitem{nilpotent}
F. Ced\'{o}, A. Smoktunowicz, and L. Vendramin, \emph{Skew left braces of nilpotent type}, Prod. London Math. Soc. (3) 118 (2019), no. 6, 1367--1392.

\bibitem{Skew braces}
L. Guarnieri and L. Vendramin, \emph{Skew braces and the Yang-Baxter equation}, Math. Comp. 86 (2017), no. 307, 2519--2534.

\bibitem{Ito}
N. It\^{o}, \emph{\"{U}ber das Produkt von zwei abelschen Gruppen}. (German) Math. Z. 62 (1955), 400--401.

\bibitem{factorization}
E. Jespers, \L. Kubat, A. Van Antwerpen, and L. Vendramin, \emph{Factorizations of skew braces}, Math. Ann. 375 (2019), no. 3--4, 1649--1663.

\bibitem{opposite}
A. Koch and P. J. Truman, \emph{Opposite skew left braces and applications}. J. Algebra 546 (2020), 218--235.



\end{thebibliography}
\end{document}